\documentclass[11pt]{amsart}

\usepackage{epsfig,overpic}
\usepackage[usenames,dvipsnames,svgnames,table]{xcolor}
\usepackage[hyphens]{url}
\usepackage[pagebackref,linktocpage=true,colorlinks=true,linkcolor=Blue,citecolor=BrickRed,urlcolor=RoyalBlue]{hyperref}
\usepackage[msc-links,abbrev]{amsrefs}
\usepackage{amsmath,amsthm,amssymb}
\usepackage{comment}

\newcommand{\R}{\mathbf{R}}
\newcommand{\M}{\mathbf{M}}
\newcommand{\K}{\mathcal{K}}
\newcommand{\MA}{\mathcal{M}}
\newcommand{\C}{\mathcal{C}}
\renewcommand{\L}{\mathcal{L}}

\newcommand{\be}{\begin{equation}}
\newcommand{\ee}{\end{equation}}

\newcommand{\ol}{\overline}
\newcommand{\CAT}{\operatorname{CAT}}
\newcommand{\length}{\operatorname{length}}
\newcommand{\cs}{\operatorname{cs}}
\newcommand{\sn}{\operatorname{sn}}
\newcommand{\conv}{\operatorname{conv}}
\newcommand{\signdist}{\operatorname{sgndist}}
\newcommand{\inte}{\operatorname{int}}

\renewcommand{\S}{\mathbf{S}}
\renewcommand{\tilde}{\widetilde}
\renewcommand{\angle}{\measuredangle}
\renewcommand{\epsilon}{\varepsilon}

\theoremstyle{plain}
\newtheorem{theorem}{Theorem}[section]
\newtheorem{lemma}[theorem]{Lemma}
\newtheorem{proposition}[theorem]{Proposition}

\theoremstyle{definition}

\newtheorem{note}[theorem]{Note}

\theoremstyle{remark}

\begin{document}

\title[A quantitative Schur comparison theorem]{A quantitative Schur comparison theorem\\for curves in CAT($\mathbf{k}$) spaces}

\author{Mohammad Ghomi}
\address{School of Mathematics, Georgia Institute of Technology,
Atlanta, GA 30332}
\email{ghomi@math.gatech.edu}
\urladdr{www.math.gatech.edu/~ghomi}

\author{John Ioannis Stavroulakis}
\address{School of Mathematics, Georgia Institute of Technology,
Atlanta, GA 30332}
\email{jstavroulakis3@gatech.edu}

\begin{abstract}
We obtain a quantitative form of Schur's comparison theorem for curves with finite total curvature in
$\CAT(k)$ spaces. This sharpens and extends the classical arm and bow lemmas in Euclidean
space, as well as their Riemannian analogues. The proof is based on a comparison formula for curves in model planes, expressed in terms of curvature measures and a notion of moment arm borrowed from mechanics. Another ingredient is a refinement of Reshetnyak's theorem that controls the curvature of the majorizing curve.
 \end{abstract}

\date{\today\,(Last Typeset)}

\subjclass[2020]{Primary 53C23; Secondary 53C20, 53C24, 51F99.}

\keywords{Schur comparison theorem, CAT($k$) spaces, total curvature,
curvature measure, moment arm,  Reshetnyak majorization, irregular curves,
arm lemma, bow lemma, rigidity.}

\maketitle

\section{Introduction}
Schur's classical  theorem \cite{schur1921,hopf1989,chern1967} asserts that decreasing the curvature of a convex arc  moves its endpoints farther apart. Here we establish a sharp quantitative form of this principle for curves in spaces of curvature bounded above. To state our results precisely, 
let $X_k$ be a $\CAT(k)$ space,  and $\alpha\colon [a,b]\to  X_k$ be  a continuous map or \emph{curve}. The \emph{total curvature} \cite{sullivan2008,milnor1950} of $\alpha$ on any interval $I:=[c,d]\subset[a,b]$ is given by
$$\kappa_\alpha(I):=\sup_P\sum_{i=1}^{N-1}\big(\pi-\angle_\alpha(t_i)\big),$$
where the supremum ranges over all partitions $P=\{c=t_0<\cdots<t_N=d\}$  of $I$ with $\alpha(t_i)\neq\alpha(t_{i+1})$, and
$0\leq\angle_\alpha(t_i)\leq \pi$ is the Alexandrov angle \cite{bbi2001,bridson-haefliger1999} between the geodesic segments 
$\alpha(t_i)\alpha(t_{i-1})$ and $\alpha(t_i)\alpha(t_{i+1})$.  If $ X_k$ is a  Riemannian manifold and $\alpha$ is $\C^2$, then
$
 \kappa_\alpha
$
 is the integral of the norm of geodesic curvature \cite{lopez-mateos-masque2010}. We say  $\alpha$ is  \emph{convex} if it is injective on $(a,b)$, and  $\alpha([a,b])$ lies on the boundary of its convex hull, denoted by $\conv(\alpha)$. 
 For curves $\alpha$, $\beta\colon[a,b]\to  X_k$ we write $\kappa_\beta\leq\kappa_\alpha$ if  $\kappa_\beta(I) \leq \kappa_\alpha(I)$ for all intervals $I\subset[a,b]$.  Let $\M^2_k$ denote the model plane of constant curvature $k$. 

\begin{theorem}\label{thm:main}
Let $\alpha \colon [0,\ell] \to \M^2_k$ and  $\beta \colon [0,\ell] \to  X_k$ be unit speed curves. Suppose that $\alpha$ is convex, $\kappa_\beta \leq \kappa_\alpha$,
and $\ell<\pi/\sqrt{k}$ if $k>0$.
Then
\begin{equation}\label{eq:schur}
\bigl|\beta(0)\beta(\ell)\bigr|\geq \bigl|\alpha(0)\alpha(\ell)\bigr|.\end{equation}
Equality holds only if the map $\alpha(t)\mapsto\beta(t)$ extends to an isometry 
between $\conv(\alpha)$ and $\conv(\beta)$.
\end{theorem}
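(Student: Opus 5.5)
The plan is to reduce Theorem~\ref{thm:main} to a comparison between two curves in the model plane $\M^2_k$, and then settle that planar comparison with an explicit formula.

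\textbf{Step 1: majorize $\beta$ by a planar curve.} Let $\sigma$ be the geodesic segment $\beta(\ell)\beta(0)$, and form the closed curve $\beta*\sigma$. By Reshetnyak's majorization theorem there is a convex region $C\subset\M^2_k$ and a $1$-Lipschitz map $f\colon C\to X_k$ that carries the boundary of $C$ isometrically, in arclength, onto $\beta*\sigma$. Let $\tilde\beta\colon[0,\ell]\to\M^2_k$ be the boundary arc corresponding to $\beta$. It has unit speed, it is convex, and its chord has length $|\tilde\beta(0)\tilde\beta(\ell)|=|\beta(0)\beta(\ell)|$.

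The essential extra ingredient, and what I expect to be the \textbf{main obstacle}, is a refinement showing that the majorizing curve has no more curvature than the original:
\[
\kappa_{\tilde\beta}(I)\le\kappa_\beta(I)\quad\text{for every } I\subset(0,\ell).
\]
The reason it should hold is that $f$ is $1$-Lipschitz and preserves the lengths of boundary subarcs. Hence comparison triangles of $\tilde\beta$ are "fatter", and the Alexandrov angles of $\tilde\beta$ at partition points are at least the corresponding angles of $\beta$. I would first try to prove this for inscribed geodesic polygons, using angle comparison for $1$-Lipschitz maps that are isometric on the sides. I would then pass to limits through the standard approximation of total curvature by inscribed polygons. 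Interior points are handled this way; the endpoints need separate care.

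\textbf{Step 2: compare two planar convex curves.} After Step 1 it suffices to show: if $\alpha$ and $\gamma:=\tilde\beta$ are unit speed convex arcs in $\M^2_k$ with $\kappa_\gamma\le\kappa_\alpha$ and $\ell<\pi/\sqrt{k}$, then $|\gamma(0)\gamma(\ell)|\ge|\alpha(0)\alpha(\ell)|$. I would derive a formula expressing the chord length as an integral against the curvature measure $d\kappa$. Concretely, one can straighten the curve while tracking the endpoint distance, which gives a first-variation-type formula
\[
\ell-|\alpha(0)\alpha(\ell)|\sim\int \text{(moment arm)}\,d\kappa_\alpha ,
\]
where the moment arm of a point is its (signed, suitably $k$-weighted via $\sn_k$) distance to the line through the endpoints. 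Two facts should then give monotonicity:
\begin{itemize}
\item convexity of $\alpha$ makes the moment arms nonnegative and maximal in a suitable sense;
\item the pointwise measure inequality lets one deform $\alpha$ toward $\gamma$ through curves whose chord is nondecreasing, in the same way as the classical Schur/Chern argument.
\end{itemize}
Choosing the point $t_0$ of $\alpha$ whose tangent is parallel to the chord, the classical projection argument gives
\[
|\gamma(0)\gamma(\ell)|\ge\int_0^\ell \cos\theta_\gamma\ge\int_0^\ell\cos\theta_\alpha=|\alpha(0)\alpha(\ell)|
\]
in the Euclidean case. For $k\neq0$ I would replace the projection onto the chord by the corresponding Busemann- or distance-function comparison in $\M^2_k$, using $\ell<\pi/\sqrt{k}$ to stay within the injectivity range.

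\textbf{Step 3: equality case.} If equality holds, every inequality above is an equality. In the planar step, equality forces $\theta_\gamma=\theta_\alpha$ almost everywhere, so $\gamma$ is congruent to $\alpha$. In the majorization step, the curvature and chord equalities together should force $f$ to be an isometry onto $\conv(\beta)$, via the rigidity part of Reshetnyak's theorem: flat triangles span flat convex regions. Composing the congruence with $f$ then extends $\alpha(t)\mapsto\beta(t)$ to an isometry $\conv(\alpha)\to\conv(\beta)$.
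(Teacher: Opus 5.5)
Your reduction to the model plane through a curvature-controlled majorization is the same as the paper's. However, the planar comparison in Step 2 has a genuine gap for $k\neq0$, and the equality case is missing essential steps.

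\textbf{Step 2 for $k\neq 0$.} Your projection argument is the classical Schur--Chern proof. It is fine in $\R^2$, with some care at a corner of $\alpha$ when choosing the direction parallel to the chord. But it rests on $\gamma(\ell)-\gamma(0)=\int_0^\ell T_\gamma$ and on measuring every tangent against one parallel direction. In $\S^2$ or $\mathbf{H}^2$ the gradient of a Busemann or distance function rotates along a curve at a rate that depends on where the curve is. So $\kappa_\gamma\le\kappa_\alpha$ no longer yields a pointwise comparison of the relevant angles, and ``Busemann- or distance-function comparison'' does not supply a mechanism.

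Your other suggestion also fails as stated: deforming $\alpha$ to $\gamma$ by interpolating curvature measures and reading monotonicity off the moment arm formula. The interpolating curves need not be convex (this is the analogue of Cauchy's error in the arm lemma), so their moment arms can be negative. The paper avoids this by a minimization (Lemma~\ref{lem:nonclosed-model}):
\begin{itemize}
\item Minimize the chord over convex $\gamma$ with the initial data of $\alpha$ and $0\le d\K_\gamma\le d\K_\alpha$.
\item For a nonclosed minimizer $\gamma_0\not\cong\alpha$, show $\MA(\gamma_0,\K_\alpha-\K_{\gamma_0})>0$. Then adding a little of $d\K_\alpha-d\K_{\gamma_0}$ shortens the chord.
\item If the perturbed curve is not convex, lay its end pieces inside the hull onto the hull chord. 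This gives a convex competitor with an even shorter chord.
\item Exclude closed minimizers by a continuity argument over the subarcs $\alpha|_{[0,t]}$.
\end{itemize}
For the inequality alone, the appendix instead uses convex polygonal approximations and the arm lemma in $\M^2_k$.

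\textbf{Rigidity when $\alpha$ is closed.} Your claim that ``equality forces $\theta_\gamma=\theta_\alpha$'' is empty here, since there is no chord direction. The paper applies the nonclosed case to subarcs to see that $\tilde\beta$ majorizes $\alpha$. Lemma~\ref{lem:majorize} then gives $\phi_{\tilde\beta}\le\phi_\alpha$ for the closing exterior angles and $|\conv(\tilde\beta)|\ge|\conv(\alpha)|$ for the hull areas. One concludes by Gauss--Bonnet for $k\le0$, or by Lemma~\ref{lem:ni} for $k>0$.

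\textbf{Transferring rigidity to $X_k$.} This takes more than ``flat triangles span flat regions.'' Reshetnyak's rigidity needs the map $f$ to preserve all distances on $\tilde\beta\cup\tilde\sigma$.
\begin{itemize}
\item Along $\tilde\beta$, this follows from the congruence $\alpha\cong\tilde\beta$ together with the endpoint inequality applied to the subarcs $\alpha|_{[s,t]}$ and $\beta|_{[s,t]}$.
\item For $p\in\tilde\beta$ and $q\in\tilde\sigma$, you need $\angle(\beta,\sigma)=\angle(\tilde\beta,\tilde\sigma)$ at the endpoints (Lemma~\ref{lem:curve-geodesic-angle}). This makes the broken curves through $\tilde\beta(0)$ and $\beta(0)$ have equal curvature. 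One more application of the endpoint inequality then gives $|f(p)f(q)|\ge|pq|$.
\end{itemize}

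\textbf{Step 1.} Your ``fatter triangles'' heuristic is false for non-polygonal $\beta$. All three sides of an inscribed triangle may lengthen, so the middle angle can shrink. You must build $\tilde\beta$ as a limit of majorizations of inscribed polygons, where adjacent sides are preserved and only the diagonal grows, and then use lower semicontinuity of total curvature. You cannot start from an arbitrary Reshetnyak majorization; whether every majorization obeys the bound is left open in the paper. You also need to rule out a degenerate $\tilde\beta$, using $\kappa_{\tilde\beta}\le\kappa_\alpha$ and the fact that $\alpha$ has no cusps.
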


We will refine this result below (Theorem \ref{thm:main2}) by computing the deficit in \eqref{eq:schur} through a virtual work formula when one curve is deformed into another by redistribution of curvature. This involves the notion of moment arm from classical mechanics, which we adapt to curves in model planes, by thinking of a convex arc as a generalized hinge whose endpoints are pulled apart by a force parallel to the endpoint geodesic segment.

Our findings improve several earlier results with regard to regularity, ambient space, sharpness, or rigidity. The case where $\alpha$ is polygonal, known as  the ``arm lemma'' \cite{akp2024,aigner-ziegler1999,sabitov2004,schoenberg-zaremba1967}, goes back to Cauchy and Legendre.  For piecewise $\C^2$ curves in $\R^n$, Theorem \ref{thm:main} originates with Schur \cite{schur1921}, and has been called the ``bow lemma'' \cite{petrunin-zamora2022,gromov2025}. For piecewise $\C^2$ curves in  $\S^2$, it was established recently by Ni \cite{ni2023}.
For  $\C^1$ curves in Cartan-Hadamard manifolds $M$, the endpoint inequality \eqref{eq:schur} was proved in \cite{ghomi2026-total}; in the case where  $M$ has curvature at most $k\leq 0$ and $\alpha$ is $\C^2$, it was proved in \cite{ghomi2025-convexity}. See Gromov \cite[Sec. 8 \& 9]{gromov2025} and \cite{mendes2025,howard2024,lopez2011,petrunin2025} for other recent variations and applications.

To prove Theorem~\ref{thm:main}, we use a refinement of Reshetnyak's majorization theorem to replace $\beta$ by a convex curve $\tilde\beta$ in $\M^2_k$ with the same length, endpoint distance, and with curvature
$\kappa_{\tilde\beta}\leq\kappa_\beta\leq\kappa_\alpha$. Thus the problem is reduced to  the model plane, where our moment arm identity, applied through a variational argument, yields
$$
|\alpha(0)\alpha(\ell)|
\leq
|\tilde\beta(0)\tilde\beta(\ell)|
=
|\beta(0)\beta(\ell)|.
$$
The characterization of the equality case is obtained by tracing the equality conditions in the moment arm identity and in Reshetnyak's majorization. The full argument is presented as follows. Section \ref{sec:curvature-measures} reviews curvature measures for curves of finite total curvature in Riemannian surfaces and proves the basic existence and uniqueness results that we need. Section \ref{sec:moment-arm} introduces the moment arm and establishes the key identity (Theorem \ref{thm:moment-id}). In Section \ref{sec:model-plane} we apply this identity to prove Theorem \ref{thm:main} for model planes. Section \ref{sec:majorization} refines Reshetnyak's theorem by choosing the majorizing curve with no greater curvature (Theorem \ref{thm:reshetnyak-curvature}). Finally, Section \ref{sec:main} combines these ingredients to establish our main result (Theorem \ref{thm:main2}) which encompasses Theorem \ref{thm:main}. Appendix \ref{sec:polygonal} gives a quicker proof of \eqref{eq:schur} without rigidity.

\section{Curvature Measures}\label{sec:curvature-measures}
We assume that all curves $\alpha\colon[a,b]\to X_k$ have
\emph{finite total curvature}, i.e., $\kappa_\alpha([a,b])<\infty$. In particular $\alpha$ is rectifiable \cite{alexandrov-reshetnyak1989, maneesawarng-lenbury2003} and so admits arclength parametrization, in which case we say it is
\emph{unit speed}. We will further assume that $\length(\alpha)<\pi/\sqrt{k}$
when $k>0$, which ensures the uniqueness of geodesic segments connecting points of $\alpha$. 

The study of curves with finite total curvature goes back to Milnor \cite{milnor1950}. Their fundamental properties were established in $\R^n$ by Alexandrov-Reshetnyak \cite{alexandrov-reshetnyak1989,reshetnyak2005,sullivan2008}, and extended to $\CAT(k)$  spaces \cite{alexander-bishop1998,alexander-bishop1996,maneesawarng-lenbury2003,karuwannapatana-maneesawarng2007} and Riemannian manifolds \cite{mucci-saracco2021,mucci-saracco2024,lopez-mateos-masque2010} by others. We use these results to develop the basic curvature calculus that we need.

Let $M^2$ be a complete simply connected  Riemannian surface. We assume that $M^2$ is oriented by a consistent choice of (counterclockwise) rotation $J\colon T_pM\to T_p M$ by $\pi/2$ in each tangent space. Let
$\alpha\colon[0,\ell]\to M^2$ be a unit speed curve of finite total curvature. Then $\alpha$ is \emph{one-sidedly smooth} in the sense of Alexandrov-Reshetnyak \cite{alexandrov-reshetnyak1989,mucci-saracco2021}. This means that, in local
coordinates, the left and right derivatives $\alpha_{\pm}'$ exist  everywhere and are one-sidedly continuous, i.e.,
$
\alpha'_-(t)=\lim_{s\to t^-}\alpha'_-(s)=\lim_{s\to t^-}\alpha'_+(s),
$
and
$
\alpha'_+(t)=\lim_{s\to t^+}\alpha'_-(s)=\lim_{s\to t^+}\alpha'_+(s),
$
for every $t\in(0,\ell)$, with the obvious modifications at the endpoints. We set
$
T_\pm(t):=\alpha'_{\pm}(t),
$
for $t\in(0,\ell)$, and 
$
T_+(0):=\alpha'_+(0)
$,
$
T_-(\ell):=\alpha'_-(\ell).
$
Then the \emph{angle} and \emph{exterior angle} of $\alpha$ at $t\in(0,\ell)$ are given, respectively, by
$$
\angle_\alpha(t):=\angle\big(-T_+(t),T_-(t)\big)\in[0,\pi],\qquad\qquad \tilde\angle_\alpha(t):=\pi-\angle_\alpha(t).
$$
Furthermore we define the \emph{unit tangent vector field} of $\alpha$ by
$$
T(t):=T_-(t)\quad\text{for }0<t\leq\ell,\qquad
T(0):=T_+(0).
$$
Since $\alpha$ is one-sidedly smooth, $T$ is left-continuous.
A point $t\in(0,\ell)$ is  \emph{smooth} if $\angle_\alpha(t)=\pi$; otherwise it is a \emph{corner}, and is called a
\emph{cusp} if  $\angle_\alpha(t)=0$.
Choose a smooth orthonormal frame $E_1$, $ E_2$, with $J(E_1)=E_2$, on an open set containing
$\alpha([0,\ell])$. Let
$$
\omega(v):=\big\langle\nabla_vE_1,E_2\big\rangle
$$
be the corresponding connection form. Since $\alpha$ has finite total
curvature, $T$ has bounded variation, i.e., $\langle T,E_i\rangle$
are BV  functions \cite[p.~144]{sullivan2008} \cite[p. 531]{mucci-saracco2021}. By the
left-continuity of $T$, the angle  which $T$ makes with $E_1$ may be constructed
to be left-continuous, with each jump from $T_-(t)$ to $T_+(t)$ lying in
$(-\pi,\pi]$ (so, by convention, cusps correspond to rotation by $\pi$). This gives a left-continuous BV function $\theta\colon[0,\ell]\to\R$, called the \emph{turning angle} of
$\alpha$, unique after prescribing $\theta(0)$, such that
$
T=\cos\theta\,E_1+\sin\theta \,E_2.
$
 The  \emph{(cumulative) curvature function} of $\alpha$ is then defined by 
\begin{equation}\label{eq:integrated-frenet}
\K_\alpha(t):=\theta(t)-\theta(0)+\int_0^t\omega\big(T(r)\big)\,dr,
\end{equation}
for $0\leq t\leq\ell$.
Thus $\K_\alpha$ is left-continuous and BV. Consequently, the \emph{(signed) curvature measure} of $\alpha$ may be defined as the Lebesgue-Stieltjes measure $d\K_\alpha$, given by
$$
d\K_\alpha([s,t)):=\K_\alpha(t)-\K_\alpha(s).
$$
Since the connection term $\omega$ is continuous, the
atoms of $d\K_\alpha$ are precisely the jumps of $\theta$ at the corners of $\alpha$. Hence
$
d\K_\alpha(\{t\})\in(-\pi,\pi],
$
and
$
\big|d\K_\alpha(\{t\})\big|=\tilde\angle_\alpha(t).
$
If another
orthonormal frame is obtained by rotation through an angle $\phi$, then the
new turning angle is $\theta-\phi\circ\alpha$, while the new connection form
is $\omega+d\phi$. These changes cancel in
\eqref{eq:integrated-frenet}. Thus $\K_\alpha$ depends only on $\alpha$.
If $\alpha$ is $\C^2$, 
then
$
d\K_\alpha=k_\alpha\,dt,
$
where
$
k_\alpha:=\big\langle\nabla_TT,J(T)\big\rangle
$
 is the geodesic curvature.
For $0\leq s<t\leq\ell$,
$$
\kappa_\alpha([s,t])=|d\K_\alpha|\big((s,t)\big).
$$
This equation may be taken as the definition of the 
\emph{(unsigned) curvature measure} $|d\K_\alpha|$.
 For curves $\alpha$, $\beta\colon[0,\ell]\to M^2$,  
we have $\kappa_\beta\leq\kappa_\alpha$ 
 if and only if $|d\K_\beta|\leq |d\K_\alpha|$. Using \eqref{eq:integrated-frenet}, we now show the continuous dependence
of curves on their curvature functions. We write $|pq|$ for the distance between points $p$ and $q$.

\begin{lemma}\label{lem:continuous}
Let $\alpha,\alpha_m\colon[0,\ell]\to M^2$ be unit speed curves of finite
total curvature with
$
        \alpha_m(0)=\alpha(0),
$
$
        T_m(0)=T(0).
$
Then there is a constant $C$, independent of $m$, such that
\begin{equation}\label{eq:L1-continuity-estimate}
\|T_m-T\|_{L^1([0,\ell])}
+
\sup_{0\leq t\leq\ell}|\alpha_m(t)\alpha(t)|
\leq
C\|\K_{\alpha_m}-\K_\alpha\|_{L^1([0,\ell])}.
\end{equation}
Furthermore, if
$
\|\K_{\alpha_m}-\K_\alpha\|_{L^1([0,\ell])}\to0
$
and
$
\K_{\alpha_m}(\ell)\to \K_\alpha(\ell),
$
then $T_m(\ell)\to T(\ell)$.

\end{lemma}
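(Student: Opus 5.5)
The approach is a Gronwall-type stability argument for the integrated Frenet equation \eqref{eq:integrated-frenet}, treated as an ODE system in the unknowns $(\alpha,\theta)$ driven by the BV function $\K_\alpha$. Fix a smooth orthonormal frame $E_1,E_2$ on a neighborhood $U$ of $\alpha([0,\ell])$; we may take $U$ relatively compact. Write $\theta,\theta_m$ for the turning angles, normalized by $\theta_m(0)=\theta(0)$, which is possible since $T_m(0)=T(0)$. Then \eqref{eq:integrated-frenet} gives
$$
\theta_m(t)-\theta(t)=\big(\K_{\alpha_m}(t)-\K_\alpha(t)\big)-\int_0^t\Big(\omega\big(T_m(r)\big)-\omega\big(T(r)\big)\Big)\,dr .
$$
In coordinates, $\alpha_m(t)-\alpha(t)=\int_0^t(T_m-T)\,dr$, with $T=\cos\theta E_1(\alpha)+\sin\theta E_2(\alpha)$.

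There is a localization issue: $\alpha_m$ might leave $U$. Since $M^2$ is simply connected and complete, I would choose instead a global orthonormal frame. This is possible because $M^2$ is diffeomorphic to $\R^2$, so it is parallelizable. Since all curves have length $\ell$ and start at $\alpha(0)$, they stay in the compact ball $B$ of radius $\ell$ about $\alpha(0)$. On $B$ the functions $E_i$ and $\omega$ are Lipschitz with a uniform constant $L$, and we use a fixed chart or embedding distance comparable to $|\cdot\,\cdot|$ on $B$.

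Now set $u(t):=|\theta_m-\theta|(t)+|\alpha_m(t)-\alpha(t)|$. Since $|T_m-T|\le |\theta_m-\theta|+L|\alpha_m-\alpha|$ and $\omega$ is Lipschitz in both the point and the unit vector, we obtain
$$
u(t)\le |\K_{\alpha_m}-\K_\alpha|(t)+C_1\int_0^t u(r)\,dr .
$$
The integral form of Gronwall's inequality, which needs only integrability and not continuity, yields
$$
u(t)\le |\K_{\alpha_m}-\K_\alpha|(t)+C_1e^{C_1\ell}\int_0^t|\K_{\alpha_m}-\K_\alpha|\,dr .
$$
Integrating over $[0,\ell]$ bounds $\|\theta_m-\theta\|_{L^1}$, and hence $\|T_m-T\|_{L^1}$, by $C\|\K_{\alpha_m}-\K_\alpha\|_{L^1}$. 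For the sup term I would not use $u$ pointwise, since that would involve the sup of $|\K_{\alpha_m}-\K_\alpha|$. Instead, $|\alpha_m(t)-\alpha(t)|\le\int_0^t|T_m-T|\le\|T_m-T\|_{L^1}$, which is already controlled, and the chart distance is comparable to $|\alpha_m(t)\alpha(t)|$ on $B$. This gives \eqref{eq:L1-continuity-estimate}.

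For the final claim, evaluate the displayed identity at $t=\ell$. The integral term is bounded by $L'\big(\|T_m-T\|_{L^1}+\ell\sup|\alpha_m-\alpha|\big)$, which tends to $0$ by the first part. Together with $\K_{\alpha_m}(\ell)\to\K_\alpha(\ell)$ this gives $\theta_m(\ell)\to\theta(\ell)$. Since also $\alpha_m(\ell)\to\alpha(\ell)$, we get $T_m(\ell)\to T(\ell)$.

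The main obstacle is the bookkeeping. One must use a global frame so that the turning angles are defined on all of $[0,\ell]$ for all $m$, and check that the Gronwall step is legitimate for discontinuous BV functions. It is, since only the integral inequality is used.
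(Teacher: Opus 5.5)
Your argument is essentially the paper's proof. Both derive a Gr\"onwall estimate for $|\alpha_m-\alpha|+|\theta_m-\theta|$ from the integrated Frenet equation \eqref{eq:integrated-frenet}. You apply the integral form of Gr\"onwall to $u$ directly, while the paper applies the differential form to $F_m(t)=\int_0^t u$; this difference is cosmetic. The sup term is controlled in both by $\int_0^t|T_m-T|$, and the last assertion follows in both by evaluating at $t=\ell$.

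One step is false as written. A complete simply connected surface need not be diffeomorphic to $\R^2$: the sphere also qualifies. The model plane $\M^2_k$ with $k>0$ is a round sphere, and the paper uses this lemma there (e.g.\ via Proposition~\ref{prop:prescribed-curvature} in Lemma~\ref{lem:ni}). By the hairy ball theorem the sphere has no global orthonormal frame, so your localization fails in that case.

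The repair is easy and is exactly what the paper's opening sentence relies on, namely that $\alpha,\alpha_m$ lie in one coordinate neighborhood carrying a frame. Under the standing assumption $\ell<\pi/\sqrt{k}$ for $k>0$, your ball $B$ of radius $\ell$ about $\alpha(0)$ misses the antipode of $\alpha(0)$. The sphere minus that point is diffeomorphic to $\R^2$, via a stereographic chart. So a single chart and a smooth orthonormal frame exist on a neighborhood of $B$, and the rest of your argument goes through verbatim.
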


\begin{proof}
We may assume that $\alpha$, $\alpha_m$ are  contained in a coordinate neighborhood
which carries a smooth oriented orthonormal frame $E_1,E_2$. Let
$\theta$, $\theta_m$ be the corresponding turning angles, chosen so that
$\theta_m(0)=\theta(0)$, and set
$$
V(p,\theta):=\cos\theta E_1(p)+\sin\theta E_2(p).
$$
In coordinates,
\begin{equation}\label{eq:alpham}
\alpha_m(t)-\alpha(t)
=
\int_0^t
\bigl(
V(\alpha_m(s),\theta_m(s))-V(\alpha(s),\theta(s))
\bigr)\,ds.
\end{equation}
By \eqref{eq:integrated-frenet} we have
\begin{equation}\label{eq:thetam}
\theta_m(t)-\theta(t)
=
\K_{\alpha_m}(t)-\K_\alpha(t)
-
\int_0^t
\bigl(
\omega(T_m(s))
-
\omega(T(s))
\bigr)\,ds .
\end{equation}
The maps
$
        (p,\theta)\mapsto V(p,\theta)
$
and
$
        (p,\theta)\mapsto\omega(V(p,\theta))
$
are Lipschitz. Hence there is a constant $C$,
independent of $m$, such that
\begin{equation}\label{eq:lipschitz}
|T_m-T|
+
|\omega(T_m)-\omega(T)|
\le
C\bigl(|\alpha_m-\alpha|+|\theta_m-\theta|\bigr).
\end{equation}
Next we define
$$
F_m(t):=\int_0^t
\bigl(
|\alpha_m(s)-\alpha(s)|+|\theta_m(s)-\theta(s)|
\bigr)\,ds.
$$
From \eqref{eq:alpham}, \eqref{eq:thetam}, and \eqref{eq:lipschitz}, we get, for every $t$,
$$
|\theta_m(t)-\theta(t)|
\le
|\K_{\alpha_m}(t)-\K_\alpha(t)|
+
CF_m(t),
\qquad
|\alpha_m(t)-\alpha(t)|
\le
CF_m(t).
$$
Thus, after increasing $C$,
$$
F_m'(t)
\le
|\K_{\alpha_m}(t)-\K_\alpha(t)|
+
CF_m(t)
$$
for almost every $t$. By Gr\"{o}nwall's inequality,
$
        F_m(\ell)
        \le
        e^{C\ell}
        \|\K_{\alpha_m}-\K_\alpha\|_{L^1([0,\ell])}.
$
Finally, by \eqref{eq:lipschitz}, the definition of $F_m$, and the preceding
estimate,
$$
\|T_m-T\|_{L^1([0,\ell])}
\leq
C\|\K_{\alpha_m}-\K_\alpha\|_{L^1([0,\ell])}.
$$
Also, by \eqref{eq:alpham},
$
\sup_{0\le t\le\ell}|\alpha_m(t)-\alpha(t)|
\leq
C\int_0^\ell
\bigl(
|\alpha_m-\alpha|+|\theta_m-\theta|
\bigr).
$
Combining the last two estimates  gives
\eqref{eq:L1-continuity-estimate}. The final assertion follows by evaluating
\eqref{eq:thetam} at $t=\ell$. Indeed, by hypothesis, \eqref{eq:lipschitz},
and \eqref{eq:L1-continuity-estimate}, $\theta_m(\ell)-\theta(\ell)\to0$.
Together with $\alpha_m(\ell)\to\alpha(\ell)$, this gives
$T_m(\ell)\to T(\ell)$.
\end{proof}

Using  Lemma~\ref{lem:continuous}, we obtain the following existence and uniqueness result which extends \cite[Thm.~5.10.5]{alexandrov-reshetnyak1989} for curves in $\R^2$ to Riemannian surfaces.

\begin{proposition}\label{prop:prescribed-curvature}
Let $\mu$ be a finite signed Borel measure on $[0,\ell]$ with $\mu(\{0\})=0=\mu(\{\ell\})$, and 
$
-\pi<\mu(\{t\})\leq \pi,
$
for all $t\in[0,\ell]$.
 Given a point $p\in M^2$ and  unit vector
$T_0\in T_pM^2$, there is a unique unit speed curve
$\alpha\colon[0,\ell]\to M^2$ with
$\alpha(0)=p$, $T(0)=T_0$, and $d\K_\alpha=\mu$.
\end{proposition}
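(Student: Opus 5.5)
Uniqueness comes straight from Lemma~\ref{lem:continuous}. If $\alpha$ and $\hat\alpha$ both satisfy the conclusion, then $\alpha(0)=\hat\alpha(0)$ and $T(0)=\hat T(0)$. Both $\K_\alpha$ and $\K_{\hat\alpha}$ are left-continuous, vanish at $0$, and satisfy $\K(t)=\mu([0,t))$, so $\K_\alpha=\K_{\hat\alpha}$. Applying \eqref{eq:L1-continuity-estimate} to the constant sequence $\alpha_m=\hat\alpha$ gives $\sup_t|\hat\alpha(t)\alpha(t)|=0$.

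For existence, the plan is to rewrite \eqref{eq:integrated-frenet} as an ODE driven by the BV function $K(t):=\mu([0,t))$, which is left-continuous and has jumps in $(-\pi,\pi]$. In a frame $E_1,E_2$ we look for a pair $(\alpha,\theta)$ satisfying
$$\alpha(t)=p+\int_0^t V(\alpha(s),\theta(s))\,ds,\qquad \theta(t)=\theta_0+K(t)-\int_0^t\omega\bigl(V(\alpha(s),\theta(s))\bigr)\,ds,$$
where $V(\alpha(0),\theta_0)=T_0$. Setting $\psi:=\theta-K$ turns this into a Carathéodory system in $(\alpha,\psi)$ whose right-hand side is Lipschitz in the unknowns and bounded measurable in $s$. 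Picard iteration, or a contraction on $C([0,\ell])$ with a weighted norm, then gives a unique solution, provided the curve stays in the frame's domain. To deal with the domain, we use that $M^2$ is simply connected and complete: a global smooth orthonormal frame exists on any simply connected surface (the oriented frame bundle is a circle bundle over a contractible-or-$S^2$ base; since $M^2$ is complete and simply connected and noncompact, or in general, work locally). The safe route is to solve locally and continue. On a short interval the curve stays in a coordinate disc. At the endpoint of each step we restart with the new initial point and tangent $T_+$, computed from the jump of $K$ there. Local solutions glue, because \eqref{eq:integrated-frenet} is frame independent and uniqueness holds on overlaps. Each step has length bounded below, since speed is $1$ and $M$ is complete, so compact balls of fixed radius carry frames; hence finitely many steps cover $[0,\ell]$.

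Next we check that the solution really has $d\K_\alpha=\mu$. The curve $\alpha$ is Lipschitz with derivative $V(\alpha,\theta)$ a.e., and $\theta$ is left-continuous BV, so $\alpha$ has one-sided derivatives $V(\alpha(t),\theta(t\pm))$. Consequently $T=V(\alpha,\theta)$ and its total variation is finite, so $\alpha$ has finite total curvature; here one cites the BV characterization of \cite{alexandrov-reshetnyak1989,mucci-saracco2021}. The jump of $\theta$ at $t$ is $\mu(\{t\})\in(-\pi,\pi]$. This matches the convention defining the turning angle, including the cusp case $\pi$, so $\theta$ is exactly the turning angle of $\alpha$ and $\K_\alpha=K$. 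The hypotheses $\mu(\{0\})=\mu(\{\ell\})=0$ make $T(0)=T_0$ and keep the endpoints from carrying spurious atoms.

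The main obstacle is this identification step: showing that the $\theta$ produced by the ODE coincides with the normalized turning angle, so that jumps in $(-\pi,\pi]$ are not shifted by $2\pi$, and that $\alpha$ is genuinely of finite total curvature in the paper's sense. If that proves delicate, a fallback is approximation. Write $\mu$ as a weak limit of absolutely continuous measures $\mu_m$ with $\|K_m-K\|_{L^1}\to0$, solve the smooth ODE for each $\mu_m$, and pass to the limit using the estimate \eqref{eq:L1-continuity-estimate}, which makes the curves $\alpha_m$ Cauchy.
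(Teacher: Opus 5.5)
Your proposal is correct. The uniqueness argument is the paper's, but your existence argument takes a different route.

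You solve the integral system (the one appearing as \eqref{eq:alpham}--\eqref{eq:thetam}) directly as a Carath\'eodory system, absorbing the jumps through $\psi=\theta-K$, so $\theta$ comes out with exactly the prescribed jumps. The paper instead solves \eqref{eq:alpha-prime} only for smooth densities and mollifies $F(t)=\mu([0,t))$. It then uses Lemma~\ref{lem:continuous} to get uniform convergence of the curves and $L^1$ convergence of the tangents. Finally it recovers the curvature of the limit by setting $\theta:=\theta(0)+F-\int_0^t\omega(T)$ and checking $T=V(\alpha,\theta)$ a.e.\ along a subsequence. In other words, your fallback is the paper's proof. Your route avoids the limit identification; the paper's route needs only smooth ODE theory and recycles Lemma~\ref{lem:continuous}, which it needs anyway.

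The step you single out as the main obstacle is short. Two left-continuous BV lifts of the same tangent field, with jumps in $(-\pi,\pi]$ and equal initial values, differ by $2\pi$ times an integer-valued BV function with jumps in $(-1,1)$. That function is therefore continuous, hence zero. So your ODE angle is the turning angle and $\K_\alpha=K$. The remaining fact, that a unit speed curve with BV tangent has finite total curvature, is assumed tacitly by the paper for its limit curve too, so you are on the same footing there.

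One correction: $\S^2$ has no global orthonormal frame, so local continuation is needed, not merely the safe option. It works with a uniform step size because $\alpha$ stays in the compact closed ball of radius $\ell$ about $p$.
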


\begin{proof}
First suppose that $\mu=f(t)\,dt$, where $f$ is smooth. Then $\alpha$ is
obtained uniquely by solving
\begin{equation}\label{eq:alpha-prime}
\alpha'=\cos\theta\,E_1(\alpha)+\sin\theta\,E_2(\alpha),
\qquad
\theta'=f-\omega(\alpha'),
\end{equation}
with the prescribed initial data. For the general case, let
$
        F(t):=\mu([0,t))
$
be the left-continuous cumulative function of $\mu$. Extend $F$ to $\R$ by
setting $F(t)=0$ for $t\leq0$ and $F(t)=F(\ell)$ for $t\geq\ell$. Let
$G_m:=\rho_m*F$ be the convolution of $F$ with the standard mollifier
$\rho_m(t):=m\rho(mt)$, where $\rho$ is a nonnegative smooth function
supported in $(-1,1)$ with $\int_\R\rho=1$. Then $G_m$ is smooth. Put
$f_m:=G_m'$ and
$$
F_m(t):=\int_0^t f_m(s)\,ds=G_m(t)-G_m(0).
$$
Then $F_m\to F$ in $L^1([0,\ell])$. Let $\alpha_m$ be the solution to \eqref{eq:alpha-prime} with $f=f_m$ and initial data
$\alpha_m(0)=p$, $T_m(0)=T_0$. Then $\K_{\alpha_m}=F_m$. Since
$F_m$ is Cauchy in $L^1([0,\ell])$, Lemma~\ref{lem:continuous}, applied to
pairs $\alpha_m,\alpha_n$, shows that $\alpha_m$ is Cauchy uniformly on
$[0,\ell]$, while $T_m$ is Cauchy in $L^1([0,\ell])$. Thus $\alpha_m$
converges uniformly to a curve $\alpha$, and $T_m$ converges in $L^1$ to a
field $T$. Then $\alpha'=T$ almost everywhere.

It remains to identify the curvature measure of $\alpha$. Since $\mu$ has no
atoms at $0$ or $\ell$, the extension of $F$ is continuous at these endpoints.
By the standard convergence property of mollifiers, $G_m(t)\to F(t)$ at every
continuity point of $F$. Since $G_m(0)\to F(0)=0$, it follows that
$F_m(t)\to F(t)$ at every continuity point of $F$. Define
$$
\theta(t):=\theta(0)+F(t)-\int_0^t\omega\big(T(s)\big)\,ds .
$$
Then $\theta$ is left-continuous and BV, and
$\theta_m(t)\to\theta(t)$ almost everywhere. Since $T_m\to T$ in $L^1$, after
passing to a subsequence we have $T_m(t)\to T(t)$ for almost every $t$. Passing
to the limit in the identities
$$
T_m(t)=\cos\theta_m(t)E_1\big(\alpha_m(t)\big)+\sin\theta_m(t)E_2\big(\alpha_m(t)\big)
$$
shows that
$
T(t)=\cos\theta(t)E_1(\alpha(t))+\sin\theta(t)E_2(\alpha(t))
$
for almost every $t$. In particular $|T(t)|=1$ for almost every $t$, and hence $\alpha$ is unit speed. Furthermore, $\K_\alpha=F$, and consequently
$d\K_\alpha=dF=\mu$. 

Uniqueness follows from Lemma~\ref{lem:continuous}. Indeed,  the  measure $\mu$ determines the curvature function $\K_\alpha$, and
therefore \eqref{eq:L1-continuity-estimate} forces two curves with the same
initial conditions and curvature function to be identical.
\end{proof}

\section{The Moment Arm Identity}\label{sec:moment-arm}
A \emph{model plane} $\M^2_k$ is a complete simply connected Riemannian surface with constant curvature $k$. Thus, up to a rescaling, $\M^2_k$ is either the Euclidean space $\R^2$, the sphere $\S^2$, or the hyperbolic space $\mathbf{H}^2$. 
Here we introduce the notion of moment arm for  curves in $\M^2_k$, and use it to obtain a comparison formula for their endpoint distance. Let
$$
\sn_k(x):=
\begin{cases}
\sin(\sqrt{k}x)/\sqrt{k}, & k>0,\\
x, & k=0,\\
\sinh(\sqrt{-k}x)/\sqrt{-k}, & k<0
\end{cases}
$$
be the generalized sine function. Let $p,q\in\M^2_k$ be distinct points with
$|pq|<\pi/\sqrt{k}$ when $k>0$. We denote the complete geodesic or \emph{line} through them, oriented from $p$ to $q$, by
$\L_{pq}$. The \emph{left side} of $\L_{pq}$ is the side into which $J(T)$ points, where $T$ is a unit tangent vector in the direction of $\L_{pq}$, and the other side is the \emph{right side}. For any point $o\in\M^2_k$, let
$\signdist(o,\L_{pq})$ be the \emph{signed distance} from $o$ to $\L_{pq}$, positive when $o$ lies in the interior of the right side of $\L_{pq}$,
negative when $o$ lies in the interior of the left side, and zero on
$\L_{pq}$. Set
$$
m(poq):=\sn_k\bigl(\signdist(o,\L_{pq})\bigr).
$$

We call $m(poq)$ the \emph{moment arm} of the \emph{hinge} $poq$, i.e., the union of the oriented geodesic segments $po$ and $oq$.  Thus $m(poq)>0$ when $poq$ turns left; see Figure \ref{fig:hinge}. More generally we say that a convex curve $\alpha\colon[0,\ell]\to \M^2_k$, with $\alpha(0)\neq\alpha(\ell)$, \emph{turns left} 
if it lies to the right of $\L_{\alpha(0)\alpha(\ell)}$. 
 \begin{figure}[h]
\begin{overpic}[height=0.75in]{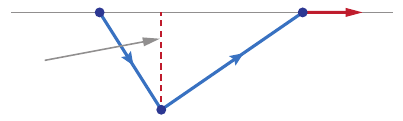}
\put(23,30.5){\small$p$}
\put(74,30.5){\small$q$}
\put(38.5,-2){\small$o$}
\put(0,10){\small$m(poq)$}
\put(-8,27){\small$\L_{pq}$}
\end{overpic}
\caption{}\label{fig:hinge}
\end{figure}
Equivalently the turning angles of polygonal curves inscribed in $\alpha$ are nonnegative, and $d\K_\alpha\geq 0$; these facts are standard in $\R^2$ \cite[Sec. 5.10 \& 7.2.1]{alexandrov-reshetnyak1989} and extend to $\M^2_k$ by projective transformations.
For $k=0$, $m(poq)$ is  the signed distance from $o$ to the line $\L_{pq}$, which is the usual notion of moment
arm in mechanics corresponding to a unit force applied to an endpoint of a hinge along the direction of the endpoint line. 

 Let $\rho\colon[0,\ell]\to\R$ be a left-continuous BV function, with $\rho(0)=0$, and $d\rho$ be the
Lebesgue-Stieltjes measure associated to $\rho$. Suppose also that $d\rho$
has no atoms at $0$ or $\ell$, i.e.,
$d\rho(\{0\})=d\rho(\{\ell\})=0$. Then, for any unit speed curve $\alpha\colon[0,\ell]\to\M^2_k$ with
$\alpha(0)\neq\alpha(\ell)$, we define the \emph{moment arm}  with respect to $\rho$ by
$$
\MA(\alpha,\rho):=\int_0^\ell
m\big(\alpha(0)\alpha(t)\alpha(\ell)\big)\,d\rho(t).
$$
If $\alpha(0)=\alpha(\ell)$, we set $\MA(\alpha,\rho)=0$. In particular, note that if $\alpha$ is a convex curve which turns left and $d\rho\geq 0$, then $\MA(\alpha,\rho)\geq 0$.

\begin{theorem}\label{thm:moment-id}
Let $\alpha,\beta\colon[0,\ell]\to\M^2_k$ be unit speed curves of finite
total curvature. For $0\leq r\leq1$, let
$\alpha_r\colon[0,\ell]\to\M^2_k$ be  unit speed curves  with curvature measure
$
d\K_{\alpha_r}:=d\K_\alpha-rd(\K_\alpha-\K_\beta).
$
Then
\begin{equation}\label{eq:moment-id}
|\beta(0)\beta(\ell)|-|\alpha(0)\alpha(\ell)|
=
\int_0^1\MA\bigl(\alpha_r,\K_\alpha-\K_\beta\bigr)\,dr .
\end{equation}
\end{theorem}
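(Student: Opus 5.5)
We may normalize so that all $\alpha_r$ start at $\alpha(0)$ with initial tangent $T(0)$. Then Proposition~\ref{prop:prescribed-curvature} gives $\alpha_0=\alpha$, and it makes $\alpha_1$ congruent to $\beta$, since both have curvature measure $d\K_\beta$. The hypotheses of that proposition hold for every $r$: the atoms of $d\K_{\alpha_r}$ are convex combinations of atoms in $(-\pi,\pi]$, and there are none at $0$ or $\ell$. Write $\rho:=\K_\alpha-\K_\beta$ and $f(r):=|\alpha_r(0)\alpha_r(\ell)|$. By congruence, \eqref{eq:moment-id} reads $f(1)-f(0)=\int_0^1\MA(\alpha_r,\rho)\,dr$. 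I will show that $f$ is Lipschitz with $f'(r)=\MA(\alpha_r,\rho)$ for a.e.\ $r$, first for smooth curvature densities and then in general by approximation.

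\emph{Geometric core: the first variation.} Suppose the curvature is changed by $-h\,d\rho$ while the initial data stay fixed. To first order, each infinitesimal piece $-h\,d\rho(t)$ rotates the tail $\alpha_r([t,\ell])$ about the point $o=\alpha_r(t)$ by angle $-h\,d\rho(t)$, that is, clockwise when $d\rho>0$. In $\M^2_k$ the rotation about $o$ is generated by a Killing field $X_o$. For a Killing field $X$ and a geodesic $\gamma$ the quantity $\langle X,\gamma'\rangle$ is constant along $\gamma$. Evaluate it on $\L_{pq}$, with $p=\alpha_r(0)$ and $q=\alpha_r(\ell)$, at the foot of the perpendicular from $o$. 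There $X_o$ is tangent to $\L_{pq}$ with length $\sn_k(\mathrm{dist}(o,\L_{pq}))$. Hence
\[
\frac{d}{d\phi}\Big|_{\phi=0}|p\,R^{o}_{\phi}(q)|=\pm\, m(poq),
\]
and a sign check on a left-turning hinge (where $m>0$ and a clockwise rotation of the tail opens the hinge) fixes the sign. Summing over $t$ gives $f'(r)=\int_0^\ell m(\alpha_r(0)\alpha_r(t)\alpha_r(\ell))\,d\rho(t)=\MA(\alpha_r,\rho)$, wherever $p\neq q$.

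\emph{Smooth case.} To make this rigorous when $d\K_\alpha$ and $d\K_\beta$ have smooth densities, I would encode $\alpha_r$ by its moving frame $F_r(t)$ in the isometry group $G$ of $\M^2_k$. The frame satisfies $F_r'=F_r\,(e_1+k_r(t)e_3)$, where $e_1$ generates translation along the geodesic and $e_3$ generates rotation. Differentiating in $r$ and using Duhamel's formula expresses $\partial_r F_r(\ell)$ as $\int_0^\ell(-\rho'(t))\,\mathrm{Ad}$-transported rotation generators. Concretely, $\partial_r\alpha_r(\ell)=-\int_0^\ell \rho'(t)\,X_{\alpha_r(t)}(\alpha_r(\ell))\,dt$, and the Killing-field computation above then yields the formula for $f'$. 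Since $f$ is a distance, it is $1$-Lipschitz in the endpoint, and so $f$ is Lipschitz in $r$. At parameters where $f(r)=0$, the derivative of $f$ vanishes wherever it exists, matching the convention $\MA=0$. Integrating over $[0,1]$ then gives \eqref{eq:moment-id} in the smooth case.

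\emph{General case (main obstacle).} The step I expect to be hardest is the passage to measures with atoms. I would mollify $\K_\alpha$ and $\K_\beta$ as in the proof of Proposition~\ref{prop:prescribed-curvature}. This gives smooth $\K_{\alpha,m}\to\K_\alpha$ and $\K_{\beta,m}\to\K_\beta$ in $L^1$ and at continuity points, with uniformly bounded variation, so $\rho_m\to\rho$ in the same sense. By Lemma~\ref{lem:continuous}, $\alpha_{r,m}\to\alpha_r$ uniformly in $t$, uniformly in $r$ since the $L^1$ estimates are linear in $r$. Therefore the left side of \eqref{eq:moment-id} converges. On the right side, the integrand $t\mapsto m(\alpha_{r,m}(0)\alpha_{r,m}(t)\alpha_{r,m}(\ell))$ converges uniformly to a continuous function whenever $\alpha_r(0)\neq\alpha_r(\ell)$. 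Combined with weak convergence of $d\rho_m$ to $d\rho$ and bounded total variation, this gives $\MA(\alpha_{r,m},\rho_m)\to\MA(\alpha_r,\rho)$. For weak convergence to hold on the closed interval, the absence of atoms of $d\rho$ at $0$ and $\ell$ is needed. The delicate points are parameters $r$ with $\alpha_r(0)=\alpha_r(\ell)$, where $m$ may be discontinuous. There I would use the bound $|m(poq)|\leq\sn_k(|po|)$, valid because the signed distance from $o$ to $\L_{pq}$ is at most $|po|$, to obtain a uniform bound on the integrands. Then $\MA(\alpha_{r,m},\rho_m)$ stays bounded and dominated convergence in $r$ applies. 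It remains to check either that such $r$ form a null set, or that the limiting contribution vanishes there; I would verify the latter using the first-order vanishing of $f$ at its zeros.
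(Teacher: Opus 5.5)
Your route differs from the paper's, and most of it works. The Killing-field computation is a valid replacement for the law-of-cosines proof of Lemma~\ref{lem:hinge-derivative}. The frame/Duhamel argument settles the case of smooth curvature densities. The mollification step goes through at every $r$ with $\alpha_r(0)\neq\alpha_r(\ell)$: the integrands $m(\alpha_{r,m}(0)\alpha_{r,m}(t)\alpha_{r,m}(\ell))$ converge uniformly to a continuous limit, $d\rho_m\to d\rho$ weakly with bounded variation, and dominated convergence applies in $r$.

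\textbf{The gap.} It is the step you left open. Let $Z:=\{r\colon\alpha_r(0)=\alpha_r(\ell)\}$. Passing to the limit in the integrated identity requires $\int_Z\MA(\alpha_{r,m},\rho_m)\,dr\to0$, and nothing you wrote gives this.
\begin{itemize}
\item The bound $|m(poq)|\leq\sn_k(|po|)$ gives boundedness only, not vanishing.
\item For $r\in Z$ the chords $\alpha_{r,m}(0)\alpha_{r,m}(\ell)$ are short but have uncontrolled direction. So $\MA(\alpha_{r,m},\rho_m)$ need not tend to $0$ pointwise.
\item ``First-order vanishing of $f$ at its zeros'' is a statement about the limit function $f$, not about these approximating integrands.
\item You have not shown that $Z$ is null.
\end{itemize}

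\textbf{A repair.} Avoid taking limits on $Z$ at all. By \eqref{eq:L1-continuity-estimate}, $f$ is Lipschitz, so $f(1)-f(0)=\int_0^1 f'(r)\,dr$. Each point of $Z$ is a minimum of $f\geq0$, so $f'=0$ a.e.\ on $Z$, matching $\MA=0$ there. Now take any compact $[r_1,r_2]$ in the open set $\{f>0\}$. The smooth identity holds for $f_m$ on $[r_1,r_2]$, and your pointwise-plus-dominated-convergence argument passes it to the limit:
\[
f(r_2)-f(r_1)=\int_{r_1}^{r_2}\MA(\alpha_r,\rho)\,dr .
\]
Hence $f'=\MA(\alpha_r,\rho)$ a.e.\ off $Z$, and the identity follows.

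\textbf{Comparison with the paper.} The paper avoids this issue by never perturbing the base curve. It fixes a differentiability point $r\notin Z$ and approximates only the direction $\rho$ by atomic $\rho_j$, with atoms placed off those of $d\K_{\alpha_r}$. It then uses \eqref{eq:L1-continuity-estimate} to get $|L_j'(0)-L'(r)|\leq C\|\rho_j-\rho\|_{L^1}$, and computes $L_j'(0)$ by rotating tails via Lemma~\ref{lem:hinge-derivative}. The integrand $m(\alpha_r(0)\alpha_r(t)\alpha_r(\ell))$ is then exact, and $Z$ is handled by the one-line minimum argument. Your smooth-approximation route gives a clean differential formula through Duhamel, but it pays for this with the extra measure-theoretic step at degenerate parameters.
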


The curves $\alpha_r$ here are well defined by
Proposition~\ref{prop:prescribed-curvature}. Indeed, the endpoint atom condition
is inherited from $d\K_\alpha$ and $d\K_\beta$, while for every $t\in(0,\ell)$,
$$
d\K_{\alpha_r}(\{t\})
=
(1-r)d\K_\alpha(\{t\})+r d\K_\beta(\{t\})\in(-\pi,\pi].
$$ 
In physical terms, equation \eqref{eq:moment-id}, which we call the \emph{moment arm identity}, states that the gain in endpoint distance
is the total virtual work obtained by changing the curvature by
$d(\K_\alpha-\K_\beta)$
under a unit force separating the endpoints. The proof of Theorem \ref{thm:moment-id} is based on the following infinitesimal computation. Note that, as shown in Figure \ref{fig:hinge}, when the hinge turns left, or lies on the right  side of $\L_{pq}$, then the rotation of  $q$ about $o$ increases the distance between $p$ and $q$, provided that it is in the clockwise direction, which is why the angle $\theta$ here appears with a negative sign.

\begin{lemma}\label{lem:hinge-derivative}
Let $poq$ be a hinge in $\M^2_k$, and $q_\theta$ be obtained from $q$ by
rotation about $o$ by the angle $-\theta$. Then
$$
\left.\frac{d}{d\theta}\right|_{\theta=0}|pq_\theta|=m(poq).
$$
\end{lemma}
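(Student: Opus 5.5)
The plan is to use the first variation formula for distance. Set $c(\theta):=|pq_\theta|$. The curve $\theta\mapsto q_\theta$ is the circle of radius $|oq|$ about $o$, traversed clockwise. Its velocity at $\theta=0$ is $-\sn_k(|oq|)\,J(u)$, where $u$ is the unit vector at $q$ pointing away from $o$ along the geodesic $oq$. The factor $\sn_k(|oq|)$ is the norm of the Jacobi field generated by rotating the geodesic from $o$ through unit angle; this follows from the standard form of the metric in geodesic polar coordinates on $\M^2_k$.

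If $p\neq q$, the first variation formula gives
$$c'(0)=\langle \dot q_0, w\rangle=-\sn_k(|oq|)\langle J(u),w\rangle=\sn_k(|oq|)\sin\phi,$$
where $w$ is the unit vector at $q$ pointing away from $p$ along $\L_{pq}$, and $\phi$ is the signed angle from $u$ to $w$. The sign of $\phi$ is chosen so that the expression is positive exactly when $o$ lies to the right of $\L_{pq}$. The degenerate cases $o=q$ or $o\in\L_{pq}$ both give $0$. This is consistent, since then $\dot q_0=0$ or $\dot q_0$ is orthogonal to $\L_{pq}$.

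The remaining task is to identify $\sn_k(|oq|)\sin\phi$ with $\sn_k(\signdist(o,\L_{pq}))$. For this, drop a perpendicular from $o$ to $\L_{pq}$ and consider the right triangle with vertices $o$, $q$, and the foot of the perpendicular. The angle at $q$ is $\pi-\phi$ or $\phi$, depending on the configuration. The spherical, Euclidean, or hyperbolic law of sines for a right triangle gives $\sn_k(h)=\sn_k(|oq|)\sin\phi$, where $h$ is the distance from $o$ to $\L_{pq}$. I will then check signs: clockwise rotation of $q$ about a point on the right side of $\L_{pq}$ (as in Figure \ref{fig:hinge}) pushes $q$ away from $p$, and the sign of $\signdist$ was chosen to match this. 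For $k>0$ the standing assumptions keep all relevant distances below $\pi/\sqrt{k}$, so $\sn_k$ is monotone where it is applied.

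The main obstacle is the bookkeeping: fixing the orientations of $J$, the rotation direction, and the side convention so that the signs agree, and handling the degenerate cases. I would verify the orientation first in $\R^2$ by direct computation, where $c'(0)$ is the cross product of $q-o$ with the unit direction of $q-p$. The general case then follows because the law of sines holds uniformly in $k$ with $\sn_k$ replacing the sine of side lengths. The case $p=q$ is excluded because $\L_{pq}$ is undefined there.
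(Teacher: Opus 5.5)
Your argument is correct, but it takes a different route from the paper. The paper argues purely trigonometrically. It differentiates the law of cosines in the triangle $poq_\theta$, where the angle at $o$ is $\psi+\theta$ or $\psi-\theta$ according to the side of $\L_{pq}$ on which $o$ lies. This gives $\sn_k(|po|)\sn_k(|oq|)\sin\psi/\sn_k(|pq|)$, which the paper then equates with $\sn_k(\signdist(o,\L_{pq}))$. That last step is left implicit; it rests on the law of sines in $poq$ plus the same right-triangle relation $\sn_k(h)=\sn_k(|oq|)\sin\angle q$ that you invoke. The case $k=0$ and the two sides of $\L_{pq}$ are handled separately there. You instead combine the first variation formula with the polar form of the metric, $|\partial_\varphi|=\sn_k(r)$, and get $c'(0)=-\sn_k(|oq|)\langle J(u),w\rangle$ in one stroke. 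What this buys: it is uniform in $k$, the sign is carried automatically by $\langle J(u),w\rangle$ with no case split by side, and the final identification needs only the right-triangle relation rather than the full law of sines. The price is the smoothness of $x\mapsto|px|$ near $q$, which needs $p\neq q$ and, for $k>0$, $|pq|<\pi/\sqrt{k}$. That is exactly the standing hypothesis under which $\L_{pq}$ is defined, so nothing is lost.

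A few small points to tidy up:
\begin{itemize}
\item The right triangle $oqf$ degenerates when the foot $f$ is $q$ itself; then $h=|oq|$ and $\sin\phi=\pm1$ directly.
\item The triangle gives $\sn_k(h)=\sn_k(|oq|)\,|\sin\phi|$; the sign must then come from your side check.
\item No monotonicity of $\sn_k$ is used, and it fails on $(\pi/(2\sqrt{k}),\pi/\sqrt{k})$. You only need $\sn_k\geq0$ on $[0,\pi/\sqrt{k}]$ and that $f$ is chosen as the nearest foot.
\item Your planar sanity check has the order reversed. In fact $c'(0)=\langle -J(q-o),w\rangle=w_1(q-o)_2-w_2(q-o)_1$. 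For $p=(-1,0)$, $q=(1,0)$, $o=(0,-1)$ this gives $c'(0)=1=m(poq)$, whereas $(q-o)\times w=-1$.
\end{itemize}
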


\begin{proof}
Let $\cs_k:=\sn_k'$ be the generalized cosine function, and let
$\psi:=\angle(poq)$. Suppose first that $o$ lies to the right of the oriented
line $\L_{pq}$. Since $q_\theta$ is obtained by rotating $q$ about $o$ through
the angle $-\theta$, the angle $\angle(poq_\theta)$ is $\psi+\theta$. Thus, for
$k\ne0$, the law of cosines gives
$$
\cs_k(|pq_\theta|)=\cs_k(|po|)\cs_k(|oq|)
+k\,\sn_k(|po|)\sn_k(|oq|)\cos(\psi+\theta).
$$
Differentiating at $\theta=0$, and using $\cs_k'=-k\sn_k$, yields
$$
\left.\frac{d}{d\theta}\right|_{\theta=0}|pq_\theta|
=
\frac{\sn_k(|po|)\sn_k(|oq|)\sin\psi}{\sn_k(|pq|)}
=
\sn_k\bigl(\signdist(o,\L_{pq})\bigr).
$$
If $o$ lies to the left of $\L_{pq}$, then $\angle(poq_\theta)=\psi-\theta$,
and the same computation gives the negative of the preceding quantity. The case $k=0$ follows similarly, using the Euclidean law of cosines.
\end{proof}

Now we establish the main result of this section:

\begin{proof}[Proof of Theorem \ref{thm:moment-id}]
We may assume that $\alpha_r$ have the same initial conditions as $\alpha$. Set
$L(r):=|\alpha_r(0)\alpha_r(\ell)|$,
$\rho:=\K_\alpha-\K_\beta$, 
and $\K_r:=\K_\alpha-r\rho$. By \eqref{eq:L1-continuity-estimate},
$$
|L(r)-L(s)|
\leq
|\alpha_r(\ell)\alpha_s(\ell)|
\leq
C\|\K_r-\K_s\|_{L^1([0,\ell])}
=
C|r-s|\|\rho\|_{L^1([0,\ell])}.
$$
Thus $L(r)$ is Lipschitz. It suffices now to show that $L'(r)=\MA(\alpha_r,\K_\alpha-\K_\beta)$ for almost every $r\in[0,1]$.
 First suppose that $\alpha_r(0)=\alpha_r(\ell)$. Since $L\geq0$ and
$L(r)=0$, the point $r$ is a minimum point of $L$. Hence $L'(r)=0$. By
definition, $\MA(\alpha_r,\rho)=0$ as well. So it remains to consider the case where $\alpha_r(0)\neq\alpha_r(\ell)$.

 First
assume that $d\rho$ is a finite sum of atoms,
$
d\rho=\sum_i c_i\delta_{t_i},
$
where $\delta_{t_i}$ is the unit point mass at $t_i$.
Changing $r$ to $r+h$ replaces $\K_r$ by $\K_r-h\rho$. Thus  the atom
$d\K_r(\{t_i\})$  changes by $-c_i h$. Keeping the other atoms fixed, this means that the tail $\alpha_r|_{[t_i,\ell]}$ rotates about
$\alpha_r(t_i)$ by the angle $-c_i h$, which is the same as the rotation in  the hinge
$\alpha_r(0)\alpha_r(t_i)\alpha_r(\ell)$. Hence, by Lemma~\ref{lem:hinge-derivative}, the contribution to
$L'(r)$ is
$
c_i\,m(\alpha_r(0)\alpha_r(t_i)\alpha_r(\ell)).
$
Summing over the atoms gives
$$
L'(r)=\sum_i c_i\,m\big(\alpha_r(0)\alpha_r(t_i)\alpha_r(\ell)\big)
=\MA(\alpha_r,\rho).
$$

For the general case, let
$
P_j=\{0=t_0^j<\cdots<t_{N_j}^j=\ell\}
$
be a sequence of partitions with mesh
$
|P_j|:=\max_i(t_i^j-t_{i-1}^j)\to0.
$
Let $d\rho_j$ be the signed
atomic measure which assigns the mass
$
d\rho\big((t_{i-1}^j,t_i^j]\big)
$
to a point in $(t_{i-1}^j,t_i^j)$ which is not an atom of $d\K_r$.
Let $\rho_j$ be the corresponding cumulative function. Then
$
|\rho_j(t)-\rho(t)|
\leq
|d\rho|((t_{i-1}^j,t_i^j]).
$
Hence
$$
\|\rho_j-\rho\|_{L^1([0,\ell])}
\leq
\sum_i (t_i^j-t_{i-1}^j)\,|d\rho|((t_{i-1}^j,t_i^j])
\leq
|P_j|\,|d\rho|([0,\ell])\to 0.
$$
Furthermore, $d\rho_j\to d\rho$ weakly. Indeed, if $f$ is continuous, then
$
\left|
\int_0^\ell f\,d\rho_j-\int_0^\ell f\,d\rho
\right|
$
is bounded above by
$
\epsilon_j|d\rho|([0,\ell]),
$
where
$
\epsilon_j:=\sup\{|f(t)-f(s)|\mid |t-s|\leq|P_j|\}\to0.
$

For fixed $j$ and $|h|$ sufficiently small, let $\alpha_{r,h}^j$ be the
unit speed curve with the same initial conditions as $\alpha_r$, whose curvature function is
$
\K_r-h\rho_j.
$
Since the atoms of $d\rho_j$ were chosen away from the atoms of $d\K_r$, and
since $|h|$ is small, the jumps of $\K_r-h\rho_j$ lie in $(-\pi,\pi]$.
Thus $\alpha_{r,h}^j$ exists by Proposition~\ref{prop:prescribed-curvature}.
By \eqref{eq:L1-continuity-estimate}, 
$$
|\alpha_{r,h}^j(\ell)\alpha_{r+h}(\ell)|
\leq
C\|\K_{\alpha_{r,h}^j}-\K_{\alpha_{r+h}}\|_{L^1([0,\ell])}
=
C|h|\|\rho_j-\rho\|_{L^1([0,\ell])}.
$$
Set
$
p:=\alpha_r(0)=\alpha_{r,h}^j(0)=\alpha_{r+h}(0),
$
and let
$
L_j(h)=|p\alpha_{r,h}^j(\ell)|.
$
Then, by the reverse triangle inequality,
$$
|L_j(h)-L(r+h)|
=
\big||p\alpha_{r,h}^j(\ell)|-|p\alpha_{r+h}(\ell)|\big|
\leq
|\alpha_{r,h}^j(\ell)\alpha_{r+h}(\ell)|.
$$
Also $L_j(0)=L(r)$. Hence, for $h\neq0$,
$$
\left|
\frac{L_j(h)-L_j(0)}{h}
-
\frac{L(r+h)-L(r)}{h}
\right|
\leq
C\|\rho_j-\rho\|_{L^1([0,\ell])}.
$$
At every differentiable point $r$, we obtain
$
\left|L_j'(0)-L'(r)
\right|\leq C\|\rho_j-\rho\|_{L^1([0,\ell])}.
$
Since $\rho_j\to\rho$ in $L^1([0,\ell])$, it follows that $L_j'(0)\to L'(r)$. For each $j$, the atomic case gives
$$
L_j'(0)
=
\int_0^\ell
m\big(\alpha_r(0)\alpha_r(t)\alpha_r(\ell)\big)\,d\rho_j(t).
$$
Since $\alpha_r(0)\neq\alpha_r(\ell)$, the function
$
t\mapsto m\big(\alpha_r(0)\alpha_r(t)\alpha_r(\ell)\big)
$
is continuous. Since $d\rho_j\to d\rho$ weakly, it follows that
$$
\int_0^\ell
m\big(\alpha_r(0)\alpha_r(t)\alpha_r(\ell)\big)\,d\rho_j(t)
\to
\int_0^\ell
m\big(\alpha_r(0)\alpha_r(t)\alpha_r(\ell)\big)\,d\rho(t).
$$
Combining this with $L_j'(0)\to L'(r)$ gives
$$
L'(r)=
\int_0^\ell
m\big(\alpha_r(0)\alpha_r(t)\alpha_r(\ell)\big)\,d\rho(t)
=
\MA(\alpha_r,\rho).
$$
By Proposition~\ref{prop:prescribed-curvature}, $\alpha_0=\alpha$, while
$\alpha_1$ is congruent to $\beta$. Since $L$ is Lipschitz, it is absolutely continuous. Hence
$$
|\beta(0)\beta(\ell)|-|\alpha(0)\alpha(\ell)|
=
L(1)-L(0)
=
\int_0^1 L'(r)\,dr
=
\int_0^1\MA(\alpha_r,\K_\alpha-\K_\beta)\,dr.
$$
\end{proof}

\section{Comparison in Model Planes}\label{sec:model-plane}
Here we apply the moment arm identity \eqref{eq:moment-id} and the other observations thus far to establish  Theorem \ref{thm:main} for model planes:

\begin{proposition}\label{prop:main-model}
Let $\alpha$, $\beta \colon [0,\ell] \to \M^2_k$  be unit speed convex curves. Suppose that $\kappa_\beta \leq \kappa_\alpha$.
Then the endpoint inequality \eqref{eq:schur} holds. Furthermore, 
equality holds  in \eqref{eq:schur} only if the map $\alpha(t)\mapsto\beta(t)$ extends to an isometry 
between $\conv(\alpha)$ and $\conv(\beta)$.
\end{proposition}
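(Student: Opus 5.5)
The plan is to apply the moment arm identity (Theorem \ref{thm:moment-id}) with $\rho:=\K_\alpha-\K_\beta$ and show that the integrand $\MA(\alpha_r,\rho)$ is nonnegative for every $r\in[0,1]$.

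\textbf{Normalization.} After reflecting $\alpha$ and $\beta$ if necessary (which reverses the sign of all curvature measures), assume both turn left. Then $d\K_\alpha\ge0$ and $d\K_\beta\ge0$. Since $|d\K_\beta|\le|d\K_\alpha|$, we get $0\le d\K_\beta\le d\K_\alpha$, so $d\rho\ge0$.

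There is one subtlety: if $\alpha(0)=\alpha(\ell)$ then \eqref{eq:schur} is trivial, so assume $\alpha(0)\ne\alpha(\ell)$. Atoms of $\rho$ at the endpoints vanish because $d\K_\alpha$ and $d\K_\beta$ have none there; we interpret curvature on the open interval.

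\textbf{Each $\alpha_r$ is convex.} Each intermediate curve has curvature measure $d\K_{\alpha_r}=(1-r)d\K_\alpha+r\,d\K_\beta$, which satisfies $0\le d\K_{\alpha_r}\le d\K_\alpha$. In particular its total curvature is at most $\kappa_\alpha([0,\ell])$. A convex arc has total turning at most $\pi$ in the plane (more precisely, the turning plus the two endpoint angles with the chord is $\pi$ minus the area term $k\cdot\mathrm{Area}$ in the model plane). Also $\ell<\pi/\sqrt k$ when $k>0$.

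The key step is a lemma: a unit speed curve in $\M^2_k$ with $d\K\ge0$ and total curvature less than $\pi$ plus a suitable bound is convex and turns left. I would prove it by the standard Euclidean argument via inscribed polygons, which are locally convex polygonal arcs with total exterior angle bounded. I would transfer this to $\M^2_k$ using the projective (Beltrami--Klein / gnomonic) model, where geodesics are lines and the sign of turning is preserved. This is likely the main obstacle. The bound $\kappa_{\alpha_r}\le\kappa_\alpha$ alone may not force convexity when $\alpha$ nearly closes up. A cleaner route is continuity in $r$: the set of $r$ for which $\alpha_r$ is convex and turns left is closed by Lemma \ref{lem:continuous}. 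It is also open as long as $\alpha_r(0)\ne\alpha_r(\ell)$ and the endpoint chord does not touch the interior. Local convexity ($d\K\ge0$) plus a small perturbation keeps the curve on one side of its chord unless the total turning of chord plus curve reaches $2\pi$, which is excluded since it is at most that of $\alpha$. Once $\alpha_r$ is convex and turns left, every point $\alpha_r(t)$ lies to the right of $\L_{\alpha_r(0)\alpha_r(\ell)}$. Hence $m(\alpha_r(0)\alpha_r(t)\alpha_r(\ell))\ge0$, and therefore $\MA(\alpha_r,\rho)\ge0$ because $d\rho\ge0$. Integrating over $r$ gives \eqref{eq:schur}.

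\textbf{Rigidity.} If equality holds, then $\MA(\alpha_r,\rho)=0$ for almost every $r$, and by continuity for $r$ near $0$. So $d\rho$ is concentrated on the set of $t$ where $\alpha_r(t)$ lies on the chord line. For a convex arc not contained in its chord, this set consists only of endpoints, or is an initial and final segment lying on the chord. I expect to show that on any such segment the curve $\alpha$ has no curvature except at its junctions, and that moving curvature there does not change the shape. This yields $d\rho=0$ up to such degenerate pieces, hence $\K_\alpha=\K_\beta$. Then Proposition \ref{prop:prescribed-curvature} makes $\beta$ congruent to $\alpha$ by an isometry of $\M^2_k$, which restricts to an isometry $\conv(\alpha)\to\conv(\beta)$ taking $\alpha(t)$ to $\beta(t)$.

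If $\alpha$ lies on a geodesic, then $\kappa_\alpha=0$, so $\kappa_\beta=0$ and both curves are geodesic segments of length $\ell$; this case is immediate.
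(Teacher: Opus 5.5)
\textbf{The key step fails.} Your argument depends on every interpolating curve $\alpha_r$, with $d\K_{\alpha_r}=(1-r)\,d\K_\alpha+r\,d\K_\beta$, being convex. This is false. It is the error, analogous to the one in Cauchy's proof of the arm lemma, that the paper warns about (Figure~\ref{fig:arms}).

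Here is a concrete counterexample in $\R^2$. Take unit speed polygonal curves on $[0,4]$ with edges of lengths $1,2,1$ and left turns at $t=1$ and $t=3$. Let $\alpha$ turn by $0.85\pi$ at both vertices, and let $\beta$ turn by $0.85\pi$ and $0.25\pi$. Then $0\le d\K_\beta\le d\K_\alpha$. Place the first edge along the positive $x$-axis from the origin.
\begin{itemize}
\item For $\alpha$, the terminal point has height $2\sin(0.85\pi)+\sin(1.7\pi)\approx0.10>0$.
\item For $\beta$, it has height $2\sin(0.85\pi)+\sin(1.1\pi)\approx0.60>0$.
\item The analogous test at the other end also passes, so both curves are convex.
\item $\alpha_{1/3}$ turns by $0.85\pi$ and $0.65\pi$, and its terminal point has height $2\sin(0.85\pi)-1\approx-0.09<0$. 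Hence $\alpha_{1/3}(0)$ lies in the interior of $\conv(\alpha_{1/3})$, and $\alpha_{1/3}$ is not convex.
\end{itemize}

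Your openness argument cannot detect this. For the closed curve formed by $\alpha_r$ and its chord, the total turning is always $2\pi-k|\Omega|$. So an upper bound on the turning of $\alpha_r$ only bounds the \emph{sum} of the two exterior angles at the chord's endpoints from below, and one of them can still become negative. In the example, the turning drops from $1.7\pi$ to $1.5\pi$, yet convexity is lost.

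Once $\alpha_r$ is not convex, parts of it lie to the left of $\L_{\alpha_r(0)\alpha_r(\ell)}$, where the moment arm is negative; in the example, $\alpha_{1/3}(1)$ does. You then have no control over the sign of $\MA(\alpha_r,\rho)$. Both your inequality and your rigidity sketch, which needs $\MA(\alpha_r,\rho)=0$ for almost every $r$, collapse.

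\textbf{What is missing.} The paper never needs the whole path $\alpha_r$ to be convex. It uses the moment arm identity only infinitesimally, at a curve already known to be convex, via a minimization argument:
\begin{itemize}
\item It minimizes endpoint distance over the class of convex curves with the initial data of $\alpha$ and $0\le d\K_\gamma\le d\K_\alpha$. Compactness comes from Helly's theorem, Lemma~\ref{lem:continuous}, and Blaschke selection.
\item Take a nonclosed minimizer $\gamma_0$ with $d\lambda:=d\K_\alpha-d\K_{\gamma_0}\neq0$. Convexity of $\gamma_0$ gives $\MA(\gamma_0,\lambda)\ge0$. If $\MA(\gamma_0,\lambda)=0$, then $d\lambda$ is supported on $\gamma_0^{-1}(\L)$, and Lemma~\ref{lem:corners} makes $\gamma_0$ congruent to $\alpha$. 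Otherwise, adding $s\,d\lambda$ strictly shortens the chord for small $s$.
\item If $\gamma_s$ fails to be convex, it is repaired by laying its end arcs that lie inside $\conv(\gamma_s)$ along the segment $\gamma_s(a_s)\gamma_s(b_s)$. This increases neither curvature nor endpoint distance, so it contradicts minimality.
\item Closed minimizers are ruled out by a continuity argument over the subarcs $\alpha|_{[0,t]}$.
\item Nonclosed rigidity follows because equality makes $\beta$ itself a minimizer.
\end{itemize}

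Your proposal also skips the closed case. When $\alpha(0)=\alpha(\ell)$ the inequality is trivial, but the rigidity statement (equality forces $\beta$ to be closed and congruent to $\alpha$) is not. The paper proves it in three steps:
\begin{itemize}
\item It applies the nonclosed case to subarcs, so that $\beta$ majorizes $\alpha$.
\item It uses the Lang--Schroeder extension to get $|\conv(\beta)|\ge|\conv(\alpha)|$ and $\phi_\beta\le\phi_\alpha$.
\item It concludes with Gauss--Bonnet for $k\le0$, or with Ni's central projection argument (Lemma~\ref{lem:ni}) for $k>0$.
\end{itemize}
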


One might expect that when $\alpha$ and $\beta$ in Theorem \ref{thm:moment-id} are convex, then the interpolating curves $\alpha_r$ are convex as well. This would quickly yield the endpoint inequality, by ensuring that the moment arm integral is nonnegative. However, assuming that  $\alpha_r$ are convex would be an error analogous to the one in Cauchy's incorrect proof of the arm lemma \cite{sabitov2004,schoenberg-zaremba1967}. Figure \ref{fig:arms} shows a counterexample. 
\begin{figure}[h]
\begin{overpic}[height=0.75in]{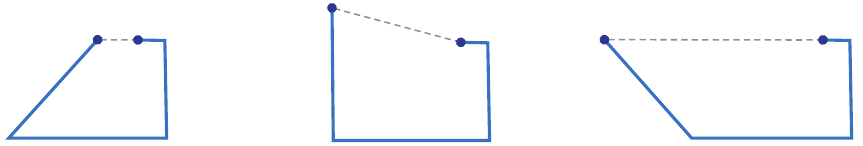}
\put(6,-2){\small $\alpha_0=\alpha$}
\put(45,-2){\small $\alpha_{\frac{1}{2}}$}
\put(84,-2){\small $\alpha_{1}=\beta$}
\end{overpic}
\caption{}\label{fig:arms}
\end{figure}
In general $\alpha_r$ are not convex when the angles which $\alpha$ makes with its endpoint geodesic segment are obtuse.
Consequently, the 
proof of Proposition \ref{prop:main-model} becomes considerably more involved and subtle in the general case. To establish this result, first we gather some basic facts about locally convex curves and their curvature measure (Section \ref{subsec:local-convex}). Using these observations, we then establish the endpoint inequality in the case where $\alpha(0)\neq\alpha(\ell)$ via a variational argument (Section \ref{subsec:nonclosed}), and finally consider the general case (Section \ref{subsec:general}).

\subsection{Locally convex curves}\label{subsec:local-convex}
A curve $\alpha\colon[0,\ell]\to\M^2_k$ is \emph{locally convex} if every point $t\in [0,\ell]$ has a neighborhood $U_t\subset[0,\ell]$ such that $\alpha|_{U_t}$ is injective, and $\alpha(U_t)$ lies on the boundary of a convex body which is always on the same side of $\alpha(U_t)$, i.e.,  to the right or to the left. Here the \emph{left side} is where $J(T)$ points  at every smooth point of $\alpha(U_t)$, and the other side is the \emph{right side}. If the convex bodies lie to the left of $\alpha(U_t)$ we say that $\alpha$ \emph{turns left}.

\begin{lemma}\label{lem:local-convexity}
Let $\alpha\colon[0,\ell]\to\M^2_k$ be a unit speed curve. Then $\alpha$ is locally convex and turns left if and only if 
$d\K_\alpha\geq0$ and $d\K_\alpha(\{t\})<\pi$ for all $t\in[0,\ell]$. 
\end{lemma}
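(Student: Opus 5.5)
Both conditions are local in $t$, so I will argue on a small neighborhood $U_t$ of each point and reduce to the Euclidean case, where the statement is classical (Alexandrov--Reshetnyak, Sec.~5.10). The reduction uses a projective (gnomonic/Beltrami--Klein) chart. Inside a small disk of $\M^2_k$ this chart carries geodesics to straight lines, and so it preserves convex bodies and convex arcs. It also preserves the side (left or right) of a locally convex arc, since it is orientation preserving. What has to be checked is that the sign of the curvature measure, and the condition on atoms, are unchanged under this chart.

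\textbf{Step 1 (sign of $d\K_\alpha$ is a projective invariant locally).} For a $\C^2$ arc, the geodesic curvature in $\M^2_k$ and the Euclidean curvature of its image in the Klein chart have the same sign. Indeed, both vanish exactly on geodesics, and the side of a line on which the arc osculates is preserved. For a general arc of finite total curvature I approximate by inscribed geodesic polygons; I could also use the mollified curves of Proposition~\ref{prop:prescribed-curvature} together with the $L^1$ stability of Lemma~\ref{lem:continuous}. A polygon has only atomic curvature, and at each vertex the atom $d\K(\{t\})$ is the signed turning angle, whose sign is left/right turning. This sign is preserved by the chart, and an atom has absolute value $\pi$ exactly at a cusp, which is also preserved. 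Passing to weak limits of the measures, I get that $d\K_\alpha\ge0$ on $U_t$ if and only if the Euclidean curvature measure of the image is $\ge0$. Likewise, atoms equal to $\pi$ (cusps, $\angle_\alpha=0$) correspond to each other.

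\textbf{Step 2 (Euclidean case).} Suppose $\alpha$ is locally convex and turns left. Then each $\alpha(U_t)$ is an injective arc on the boundary of a convex body lying to its left. Every inscribed polygon therefore turns left at each vertex, which gives nonnegative atoms, and by the limiting argument $d\K\ge0$. Injectivity on $U_t$ together with the convex body lying on one side rules out cusps, since a cusp would make $\alpha$ retrace a segment. Conversely, suppose $d\K\ge0$ with all atoms $<\pi$. Take $U_t$ so small that $|d\K|(U_t)<\pi$; this is possible because the atom at $t$ is $<\pi$ and the measure is continuous from above on shrinking neighborhoods. Then the turning angle $\theta$ is nondecreasing with total increase $<\pi$ on $U_t$. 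Hence $\langle T,v\rangle>0$ for a suitable fixed direction $v$, so $\alpha|_{U_t}$ is a graph over the line spanned by $v$. Because $\theta$ is monotone, this graph is convex, and so it bounds a convex body on its left. This is the classical statement, which I would quote rather than reprove.

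\textbf{Step 3 (globalize).} Combine Steps 1 and 2 on each $U_t$, where $U_t$ is chosen inside a chart disk. At the endpoints $0$ and $\ell$ there are no atoms by convention, so one-sided neighborhoods suffice there. The main obstacle is Step 1 for nonsmooth arcs. The delicate point is justifying that the curvature measure of $\alpha$ and that of its chart image are mutually absolutely continuous with positive density off the atoms, so that signs agree. I would first try to prove this directly from formula \eqref{eq:integrated-frenet}: the chart is a diffeomorphism, so the turning angles differ by a continuous function of $(\alpha, T)$ that is monotone in $\theta$. It then remains to verify that this change of variables keeps the measure's sign, using that geodesics (zero curvature) map to lines.
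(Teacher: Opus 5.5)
Your route differs from the paper's and can be made to work, but Step~1 has to be done by the change of variables you mention at the end. The polygon/weak-limit justification you lead with does not suffice, for two reasons.

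\emph{Why the weak-limit argument fails.} First, weak convergence only transports nonnegativity from approximants to the limit. So you would need approximants of $\alpha$ with nonnegative atoms, and for inscribed geodesic polygons in $\M^2_k$ that is essentially the local convexity being proved. Second, uniform convergence with bounded total curvature does not control curvature measures: shrinking, almost full left-turning loops converge uniformly to a right-turning corner. The $L^1$ convergence of turning angles in Lemma~\ref{lem:continuous}, applied to the curvature-prescribed approximants of Proposition~\ref{prop:prescribed-curvature}, would repair this, but the direct argument is cleaner.

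\emph{The direct argument.} Write $d\varphi_p(\cos\theta\,E_1+\sin\theta\,E_2)$ as a positive multiple of $(\cos F(p,\theta),\sin F(p,\theta))$. Then $F$ is smooth, $\partial_\theta F>0$, and $F(p,\theta+\pi)=F(p,\theta)+\pi$ by linearity of $d\varphi_p$. Up to the monotone reparametrization by Euclidean arclength, $F(\alpha(t),\theta(t))$ is the turning angle of $\varphi\circ\alpha$. The equivariance sends a jump $c\in(-\pi,\pi]$ of $\theta$ to a jump in $(-\pi,\pi]$ of the same sign, equal to $\pi$ exactly when $c=\pi$. Off the atoms, the one-dimensional chain rule for BV functions together with \eqref{eq:integrated-frenet} gives $d\K_{\varphi\circ\alpha}=\partial_\theta F(\alpha,\theta)\,d\K_\alpha+G(\alpha,\theta)\,dt$ with $G$ continuous. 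Evaluating on the geodesic through an arbitrary $(p,\theta)$, whose image is a line, forces $G\equiv0$. That is the positive-density correspondence you need, and with it Steps~2 and~3 close the argument.

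\emph{Comparison with the paper.} The paper never establishes invariance of the curvature measure. In the converse it proves injectivity of $\alpha|_I$ intrinsically, via Gauss--Bonnet applied to a hypothetical small loop; when $k>0$, a small ball $B$ with $k|B|<\pi-d\K_\alpha(I)$ absorbs the area term. It then gets the convex body from left-turning polygonal approximations. The chart need only preserve the sign of each vertex turn and distort angles by a factor at most $1+\epsilon$ near $\alpha(t_0)$. This keeps the image polygons' total turning below $\pi$, so they are convex, and Blaschke selection finishes. The forward direction is simply asserted as standard in $\R^2$ and transferred ``by projective transformations.''

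Your version front-loads the work into Step~1. After that, the Euclidean converse is the one-line graph argument: a monotone turning angle with range less than $\pi$ yields injectivity and convexity at once, with no Gauss--Bonnet or compactness. It also makes the projective transfer in the forward direction rigorous. The paper's route avoids the BV chain rule. Its Gauss--Bonnet loop estimate is also reused later to exclude small self-intersections in Lemma~\ref{lem:nonclosed-model} and Appendix~\ref{sec:polygonal}.
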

\begin{proof}
As we remarked in Section \ref{sec:moment-arm}, if $\alpha$ is locally convex and  turns left, then $d\K_\alpha\geq 0$. Furthermore, 
$d\K_\alpha(\{t\})<\pi$ since a convex curve cannot have cusps. Conversely, suppose that $d\K_\alpha\geq0$ and $d\K_\alpha(\{t\})<\pi$ for all $t\in[0,\ell]$. Then we may choose a
closed interval $I\ni t_0$ so small that
$
d\K_\alpha(I)<\pi.
$
If $k>0$, we choose $I$ still smaller so that $\alpha(I)$ lies in a convex
geodesic ball $B$ with
$
k|B|<\pi-d\K_\alpha(I).
$
This is possible because $d\K_\alpha(I)\to d\K_\alpha(\{t_0\})$ as $I$
shrinks to $t_0$, while $|B|\to 0$. We claim that $\alpha|_I$ is injective. Suppose not. Then there exist $[s,t]\subset I$ such that $\alpha|_{[s,t)}$ is injective and $\alpha(t)=\alpha(s)$.  Let $\Omega$ be a compact domain bounded by $\alpha([s,t])$, and let
$\phi:=\angle\big(\alpha'_+(s),\alpha'_-(t)\big)$ be the exterior angle at
$\alpha(s)=\alpha(t)$. By the Gauss-Bonnet theorem, if $k\leq 0$
$$
d\K_\alpha([s,t])\geq d\K_\alpha((s,t))\geq 2\pi-\phi-k|\Omega|\geq \pi-k|\Omega|\geq \pi,
$$
which is a contradiction.
If $k>0$, then choosing $\Omega\subset B$,
$$
d\K_\alpha([s,t])\geq \pi-k|\Omega|\geq\pi-k|B|>d\K_\alpha(I),
$$
again a contradiction. Thus $\alpha|_I$ is indeed injective.

It remains to show that  $\alpha(I)$
lies on the boundary of a convex body, which is on the left side of $\alpha(I)$. 
First assume that $k=0$.  By applying Proposition~\ref{prop:prescribed-curvature} and Lemma~\ref{lem:continuous} to atomic approximations of $d\K_\alpha|_I$, there are polygonal curves $\alpha_m$ with nonnegative atomic curvature measures $\mu_m$, such that
$\mu_m(I)\leq d\K_\alpha(I)<\pi$ and $\alpha_m\to\alpha|_I$ uniformly; see Section~\ref{subsec:construction} for details of such constructions. Since $\mu_m\geq0$, each $\alpha_m$ turns
left at every vertex. Its total curvature is less than $\pi$, so it is convex,
with $\conv(\alpha_m)$ lying on its left side. Passing to a subsequence,
$\conv(\alpha_m)$ converges in the Hausdorff sense to a convex body $C$, by
Blaschke's selection principle \cite[Thm. 7.3.8]{bbi2001}. It follows that
$\alpha(I)\subset\partial C$, and $C$ lies on the left side of $\alpha(I)$.

If $k\neq 0$, let $\varphi\colon U\to\R^2$ be a projective chart centered at
$\alpha(t_0)$, i.e., the Beltrami-Klein model if $k<0$ and the gnomonic
projection if $k>0$, normalized so that $d\varphi_{\alpha(t_0)}$ is a linear
isometry. As discussed above, there are polygonal curves
$\alpha_m$ with atomic curvature measures $\mu_m\geq0$,
$\mu_m(I)\leq d\K_\alpha(I)<\pi$, such that $\alpha_m\to\alpha|_I$ uniformly.
Since $\varphi$ preserves geodesics and orientation, the curves
$\ol\alpha_m:=\varphi\circ\alpha_m$ are polygonal and turn left at all vertices. Furthermore, the angle between a pair of directions at
$p\in U$ is distorted by $d\varphi_p$ by a factor of at most $\lambda(p)$, where
$\lambda$ is continuous and $\lambda(\alpha(t_0))=1$. So we may shrink $I$ so
that $\lambda\leq1+\epsilon$ on a neighborhood of $\alpha(I)$, where
$(1+\epsilon)\,d\K_\alpha(I)<\pi$. Then, for large $m$,
$$
d\K_{\ol\alpha_m}(I)\leq(1+\epsilon)\,\mu_m(I)
\leq(1+\epsilon)\,d\K_\alpha(I)<\pi.
$$
Hence $\ol\alpha_m$ are convex and turn left, and as in the case $k=0$ it
follows that $\ol\alpha(I):=\varphi\circ\alpha(I)$ lies on the boundary of a convex body
$\ol C\subset\R^2$ which lies on its left side. Since $\varphi$ preserves
orientation and convexity, $\alpha(I)\subset\partial\,\varphi^{-1}(\ol C)$, and
$\varphi^{-1}(\ol C)$ is a convex body in $\M^2_k$ lying on the left side of
$\alpha(I)$.
\end{proof}

A curve $\alpha\colon[0,\ell]\to\M^2_k$ is \emph{closed} if $\alpha(0)=\alpha(\ell)$, and is \emph{locally convex} if its periodic extension $\alpha\colon\R/(\ell\mathbf{Z})\to\M^2_k$ is locally convex. 
We say that $\alpha$ is \emph{simple} if it is injective. 
If $\alpha$ is closed and is injective on $[0,\ell)$ we say that $\alpha$ is a \emph{simple closed curve}.
It is easy to see that simple closed locally convex curves are convex. Indeed suppose that $\alpha$ turns left, and let $\Omega$ be the domain bounded by $\alpha$ which lies on the left side of it. Let $p_0$, $p_1$ be a pair of points in the interior $\inte(\Omega)$, and $\gamma\colon[0,1]\to \inte(\Omega)$ be a curve with $\gamma(0)=p_0$, $\gamma(1)=p_1$. Let $\ol t\in[0,1]$ be the supremum of $t$ such that the geodesic segment $\gamma(0)\gamma(t)\subset \inte(\Omega)$. If $\ol t=1$ then $\Omega$ is convex and we are done. Otherwise, for some $t$, $\gamma(0)\gamma(t)$ intersects $\partial\Omega=\alpha$ at an interior point, which forces $\gamma(0)\gamma(t)\subset \partial\Omega$, which is a contradiction. The next observation extends this characterization to nonclosed curves.

\begin{lemma}\label{lem:convex}
Let $\alpha\colon[0,\ell]\to\M^2_k$ be a locally convex simple curve. Suppose that $\alpha(0)$ and $\alpha(\ell)$ lie on $\partial\conv(\alpha)$. Then $\alpha$ is convex.
\end{lemma}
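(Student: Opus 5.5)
The idea is to close $\alpha$ up into a simple closed locally convex curve and then apply the observation just made, that simple closed locally convex curves are convex. Set $p:=\alpha(0)$, $q:=\alpha(\ell)$, and assume $p\neq q$; otherwise $\alpha$ is already closed and the preceding remark applies after checking local convexity at the endpoint. Let $\gamma$ be the closed curve obtained by concatenating $\alpha$ with the geodesic segment $qp$. By assumption both $p$ and $q$ lie on $\partial\conv(\alpha)$. Hence the segment $pq$, which lies in $\conv(\alpha)$, is either contained in $\partial\conv(\alpha)$ or meets $\partial\conv(\alpha)$ only at its endpoints.

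First step: show that $\alpha$ meets $pq$ only at $p$ and $q$. Suppose $\alpha(t_0)\in pq$ for some interior $t_0$. Local convexity at $t_0$ gives a convex body $C_{t_0}$, lying on a fixed side of $\alpha(U_{t_0})$, with $\alpha(U_{t_0})\subset\partial C_{t_0}$. Near $t_0$, then, $\alpha$ turns consistently (by Lemma~\ref{lem:local-convexity}, $d\K_\alpha\ge0$, say). A short argument shows that $\alpha$ cannot cross the line $\L_{pq}$ there and then come back, unless it runs along $pq$. If $\alpha$ runs along a subarc of $pq$, then $pq\subset\partial\conv(\alpha)$ and the whole of $\alpha$ lies on one side of $\L_{pq}$. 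I expect this case to be handled by continuing the same argument.

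The cleaner route is a supporting-line argument. Since $p,q\in\partial\conv(\alpha)$, there are supporting lines $H_p\ni p$ and $H_q\ni q$ of $\conv(\alpha)$. Consider the region $\Omega$ bounded by $\gamma$, which is a Jordan domain since $\alpha$ is simple and avoids the open segment $pq$. We then check that $\gamma$ is locally convex, turning the same way as $\alpha$:
\begin{itemize}
\item at interior points of $\alpha$ this is given;
\item on the open segment $pq$ it is trivial, since the segment is geodesic with zero curvature;
\item at $p$ and $q$ we need the exterior angle to have the correct sign and be less than $\pi$.
\end{itemize}
This sign condition at the endpoints is where the hypothesis $p,q\in\partial\conv(\alpha)$ enters. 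The tangent $T_+(0)$ and the direction toward $q$ both lie in the half-plane determined by $H_p$ that contains $\conv(\alpha)$. The angle between them is therefore at most $\pi$, and it is strictly less than $\pi$ unless $\alpha$ leaves $p$ along $H_p$ in the direction opposite to $q$. The orientation requirement, namely that $\alpha$ lies locally on the correct side of $pq$ near $p$, also follows from $\alpha$ being on one side of $H_p$ together with local convexity.

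Once $\gamma$ is shown to be a simple closed locally convex curve, the preceding remark gives that $\gamma$ bounds a convex domain $\Omega$. Then $\conv(\alpha)\subset\Omega$, while $\Omega=\conv(\gamma)\subset\conv(\alpha)$ because $pq\subset\conv(\alpha)$. Hence $\alpha\subset\partial\Omega=\partial\conv(\alpha)$, and together with injectivity this shows that $\alpha$ is convex.

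The main obstacle is the endpoint analysis in the middle step. There we must show that the closing segment meets $\alpha$ only at its ends, and that $\gamma$ turns consistently at $p$ and $q$ rather than making a reflex turn. To handle this I would first pass to a projective chart (Beltrami–Klein or gnomonic), as in the proof of Lemma~\ref{lem:local-convexity}. This reduces the statement to $\R^2$, since such charts preserve geodesics, convex hulls, and local convexity. In the plane I would argue with supporting lines and the tangent directions $T_+(0)$ and $T_-(\ell)$.
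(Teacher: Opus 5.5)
Your reduction is fine as far as it goes. Once $\gamma=\alpha\cup qp$ is known to be a simple closed locally convex curve turning the same way as $\alpha$, the remark before the lemma and your last paragraph finish the proof. The gap is that the two facts needed for this are essentially the whole content of the lemma, and you prove neither: that $\alpha$ misses the open chord $pq$, and that $\gamma$ turns correctly (left, by less than $\pi$) at $p$ and $q$.

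\emph{The chord.} The ``short argument'' of your first step is never given, and it cannot be local. Near an interior parameter $t_0$, local convexity is compatible with $\alpha$ crossing the chord transversally, touching it, or running along a piece of it inside $\inte\conv(\alpha)$. So your inference that a flat piece on $pq$ forces $pq\subset\partial\conv(\alpha)$ is also unfounded. Your ``cleaner route'' then simply assumes this step when it calls $\Omega$ a Jordan domain.

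\emph{The endpoints.} This step fails for the same reason. The supporting line $H_p$ only says that $T_+(0)$ and the direction of $q$ lie in a common closed half-plane. That does not decide on which side of $\L_{pq}$ the curve departs. Take $H_p$ the $x$-axis, $p=(0,0)$, $q=(1,1)$, and $T_+(0)$ at angle $3\pi/4$: everything is locally consistent with $\alpha$ turning left. This wrong-side departure can only be excluded by a global argument, since to reach $q$ while turning left, $\alpha$ must swing around $p$. Moreover, the condition you check is backwards. Angle $\pi$ between $T_+(0)$ and the direction of $q$ means $\gamma$ goes straight through $p$, which is harmless. The dangerous case is angle $0$, a cusp where $\alpha$ leaves along the chord, and that is again the unproved first step. (Also, \emph{simple} means injective here, so $p\neq q$ automatically.)

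The idea that makes this tractable in the paper is to close $\alpha$ not with the chord but with the arc $A$ of $\partial\conv(\alpha)$ from $\alpha(\ell)$ to $\alpha(0)$ that turns the same way as $\alpha$. Since $\alpha\subset\conv(\alpha)$, which lies on the convex side of $A$, the glued curve $\ol\alpha$ is locally convex. If $\ol\alpha$ is not simple, then $\alpha(t)$ lies in the interior of $A$ for some $t\in(0,\ell)$. There $\alpha$ and $A$ support each other. In a projective chart they are graphs of a concave $g$ and a convex $f$ with $f\le g$ touching at a point, which forces $f=g$ nearby. So $\alpha$ contains a geodesic piece of $A$. Propagating this along $[0,\ell]$ shows that $\alpha$ is a geodesic, which is the trivial case.

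If you want to keep the chord, you need an equally global mechanism, for instance a turning-number count for $\gamma$. It must rule out interior contacts with $pq$ and wrong-side departures at $p$ and $q$. As written, your proposal only names this obstacle rather than overcoming it.
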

\begin{proof}
If $\alpha$ is a geodesic then there is nothing to prove, so we may assume that $\conv(\alpha)$ has interior points. Then there are two arcs in $\partial\conv(\alpha)$ determined by $\alpha(0)$ and $\alpha(\ell)$. Orient each arc from $\alpha(\ell)$ to $\alpha(0)$, see Figure \ref{fig:ellipse}. Then one of the arcs turns left and the other turns right. We may assume that $\alpha$ turns left.   Attach the arc which turns left, say $A$, to the endpoints of $\alpha$.  Then we obtain a closed locally convex curve $\ol\alpha$ by Lemma \ref{lem:local-convexity}. If $\ol\alpha$ is simple then it is convex, as discussed above, and we are done. Otherwise, there exists $t\in (0,\ell)$ such that $\alpha(t)$ lies in the interior of $A$. 
\begin{figure}[h]
\begin{overpic}[height=0.9in]{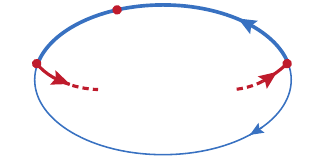}
\put(-5,28){\small $\alpha(0)$}
\put(90,28){\small $\alpha(\ell)$}
\put(33,40){\small $\alpha(t)$}
\put(68,46){\small $A$}
\end{overpic}
\caption{}\label{fig:ellipse}
\end{figure}
Since both $A$ and $\alpha$ turn left, and locally support each other at $\alpha(t)$, it follows that there is an interval $I\subset[0,\ell]$ containing $t$ in its interior such that $\alpha(I)\subset A$. Indeed, using a projective chart, we may assume that we are in $\R^2$, and 
 represent $A$ and $\alpha$ near $\alpha(t)$ as the graphs of a convex function $f$ and a concave function $g$, with $f\leq g$ which yields that $f=g$. So $\alpha(I)$ is a geodesic segment. Repeating this argument at an endpoint of $I$, if it lies in $(0,\ell)$, shows that the maximal interval $I=[0,\ell]$. So $\alpha$ is a geodesic, which is a contradiction. Hence $\ol\alpha$ is indeed simple, which completes the argument.
\end{proof}

\subsection{The nonclosed case}\label{subsec:nonclosed}
Here we use the observations of the last subsection to establish Proposition \ref{prop:main-model} in the case where $\alpha$ is not closed. First, we need the following fact:

\begin{lemma}\label{lem:corners}
Let $\alpha,\beta\colon[0,\ell]\to\M^2_k$ be unit speed convex curves with
$\kappa_\beta\leq \kappa_\alpha$. Suppose that
there exist $0\leq a<b\leq\ell$ such that $\alpha|_{[a,b]}$ and
$\beta|_{[a,b]}$ are congruent, 
$\beta([0,a])\cup\beta([b,\ell])$ lies on the geodesic through the endpoints
of $\beta$, and $\beta$ is not a geodesic.  Then $\alpha$ and $\beta$ are congruent.
\end{lemma}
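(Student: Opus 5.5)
The plan is to compare total curvatures by Gauss--Bonnet, show that $\alpha$ has no curvature outside $[a,b]$ beyond the two corner atoms that $\beta$ has at $a$ and $b$, and then conclude by uniqueness in Proposition~\ref{prop:prescribed-curvature}.

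\emph{Structure of $\beta$.} Let $L$ be the line through $\beta(0)$ and $\beta(\ell)$. Since $\beta$ is convex, simple, not a geodesic, and $\beta([0,a])\cup\beta([b,\ell])\subset L$, I will show that $\beta|_{[0,a]}$ and $\beta|_{[b,\ell]}$ are geodesic segments on $L$. Then $\conv(\beta)=\conv(\beta|_{[a,b]})$, and this convex body is bounded by $\beta([a,b])$ together with the chord $\beta(a)\beta(b)\subset L$. Consequently
\[
d\K_\beta=\phi_a\delta_a+\phi_b\delta_b+d\K_\beta|_{(a,b)},
\]
where $\phi_a,\phi_b\in[0,\pi)$ are the exterior angles of $\beta$ at $a$ and $b$. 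These are exactly the exterior angles of the closed convex curve formed by $\beta([a,b])$ and the chord. Gauss--Bonnet for this region gives
\[
\kappa_\beta([0,\ell])=2\pi-kA_\beta,
\]
where $A_\beta$ is the area of $\conv(\beta)$.

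\emph{Inequality for $\alpha$.} Apply Gauss--Bonnet to the closed convex curve formed by $\alpha$ and the segment $\alpha(\ell)\alpha(0)$. Writing $\epsilon_0,\epsilon_\ell\geq0$ for the exterior angles at $\alpha(0)$ and $\alpha(\ell)$, this gives
\[
\kappa_\alpha([0,\ell])+\epsilon_0+\epsilon_\ell=2\pi-kA_\alpha .
\]
Combined with $\kappa_\alpha([0,\ell])\geq\kappa_\beta([0,\ell])$, this yields
\[
k(A_\beta-A_\alpha)\geq\epsilon_0+\epsilon_\ell\geq0 .
\]
Since $\alpha|_{[a,b]}$ is congruent to $\beta|_{[a,b]}$, we have $\conv(\alpha)\supset\conv(\alpha|_{[a,b]})\cong\conv(\beta)$, so $A_\alpha\geq A_\beta$. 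For $k\leq0$ this forces equality throughout. In particular $\kappa_\alpha([0,\ell])=\kappa_\beta([0,\ell])$, and since $|d\K_\beta|\leq|d\K_\alpha|$ as measures, we get $|d\K_\alpha|=|d\K_\beta|$. By convexity both curves turn the same way (Lemma~\ref{lem:local-convexity}), so $d\K_\alpha=d\K_\beta$. Proposition~\ref{prop:prescribed-curvature} then shows that $\alpha$ and $\beta$ are congruent, once they are placed with the same initial point and tangent.

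\emph{Case $k>0$.} This is the main obstacle, since the sign of $k(A_\beta-A_\alpha)$ no longer helps. Here I would localize Gauss--Bonnet to the two caps. Let $D_0$ be the region bounded by $\alpha([0,a])$, the chord $\alpha(0)\alpha(a)$ and the appropriate part of $\conv(\alpha)$, and define $D_\ell$ similarly at the other end. The measure inequality on small intervals $[0,a+\delta]$ gives
\[
\kappa_\alpha([0,a+\delta])\geq\phi_a+\kappa_\beta((a,a+\delta)).
\]
The tangent $T_+(a)$ of $\alpha$ agrees with that of $\beta$ modulo the congruence. So the turning of $\alpha$ on $[0,a]$, plus the angle at $\alpha(0)$, must close up the cap $D_0$ against the line extending $\alpha(a)\alpha(b)$. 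Gauss--Bonnet on $D_0$ then shows that any extra curvature on $[0,a)$, or any positive area of $D_0$, forces the curvature at or before $a$ to be strictly less than $\phi_a$. Here the positive area term $kA(D_0)$ works in our favour, because it reduces the turning needed. This contradicts the measure inequality, so $D_0$ and $D_\ell$ are degenerate. Hence $\alpha|_{[0,a]}$ and $\alpha|_{[b,\ell]}$ are geodesic segments with corner atoms $\phi_a$ and $\phi_b$, and we conclude as before by Proposition~\ref{prop:prescribed-curvature}.
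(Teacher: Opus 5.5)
There is a genuine gap, beginning with a sign error in the Gauss--Bonnet step. From $\kappa_\alpha([0,\ell])\geq\kappa_\beta([0,\ell])$ you correctly get $k(A_\beta-A_\alpha)\geq\epsilon_0+\epsilon_\ell\geq0$, and you also have $A_\alpha\geq A_\beta$. These force equality when $k\geq0$, not when $k\leq0$. For $k<0$ the left side equals $|k|(A_\alpha-A_\beta)\geq0$ automatically, so nothing follows. In the hyperbolic plane a larger hull demands \emph{more} total turning, so extra area outside $C:=\conv(\beta)$ can compensate, in any global count, for a deficit in the corner at $a$.

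So your argument for ``$k\leq0$'' only proves $k=0$. The case $k>0$, which you call the main obstacle, is in fact settled by your global argument (when $0<a$ and $b<\ell$). The cap sketch cannot cover $k<0$: its stated mechanism, that $kA(D_0)$ ``works in our favour'', has the wrong sign there. In any case $D_0$ is not well defined, and Gauss--Bonnet on it does not yield the claimed bound.

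There is a second gap even for $k\geq0$. If $a=0$ (or $b=\ell$), $\beta$ has no atom there. The closing exterior angle $\epsilon_0^\beta$ of $\beta\cup\sigma$ at $\beta(0)$ is then not counted in $\kappa_\beta([0,\ell])$, so $\kappa_\beta([0,\ell])=2\pi-kA_\beta-\epsilon_0^\beta$. Your inequality becomes $k(A_\beta-A_\alpha)\geq\epsilon_0+\epsilon_\ell-\epsilon_0^\beta$. Since $\conv(\alpha)\supset C$ with the same initial point and tangent gives $\epsilon_0\leq\epsilon_0^\beta$, no contradiction arises.

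The missing idea is to use $\kappa_\beta\leq\kappa_\alpha$ \emph{locally}, on the single atom $\{a\}$, together with convexity of $\alpha$. This argument is first order and independent of $k$. Normalize so that $\alpha|_{[a,b]}=\beta|_{[a,b]}$ and $d\K_\alpha,d\K_\beta\geq0$. Then the curves share $T_+(a)$, and $d\K_\alpha(\{a\})\geq d\K_\beta(\{a\})=\phi_a$.

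If this inequality were strict, $\alpha$ traced backward from $\alpha(a)$ would enter $\inte(C)\subset\inte(\conv(\alpha))$, contradicting convexity of $\alpha$. Hence the corner angles agree and $\alpha$ arrives at $\alpha(a)$ tangent to $\L$. Since $\alpha$ turns left, any turning on $[0,a)$ would again push it into $\inte(C)$. Therefore $\alpha([0,a])\subset\L$, and likewise $\alpha([b,\ell])\subset\L$. Then $d\K_\alpha=d\K_\beta$, and Proposition~\ref{prop:prescribed-curvature} gives congruence.

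This is the paper's proof. Your global Gauss--Bonnet comparison is a valid alternative only in the subcase $k\geq0$, $0<a$, $b<\ell$.
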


\begin{proof}
After composing with isometries of $\M^2_k$, we may assume that 
$
\alpha|_{[a,b]}=\beta|_{[a,b]},
$
and $d\K_\alpha$, $d\K_\beta\geq 0$.
Set $C:=\conv(\alpha([a,b]))=\conv(\beta([a,b]))$. 
Let $\L$ be the geodesic through $\beta(0)$ and $\beta(\ell)$. Since $\beta$ is not a geodesic, $C$ has interior points and thus lies on a unique side of $\L$ which we denote by $\L^+$. It suffices to show that 
$$
\alpha([0,a])\cup\alpha([b,\ell])\subset \L.
$$
 There is nothing to prove if $a=0$ and $b=\ell$. Suppose that $a>0$. Since $\alpha$ and $\beta$ agree on $[a,b]$, $\alpha'_+(a)=\beta'_+(a)$. Thus if $\beta([a,b])$ is tangent to $\L$ at $a$, then the same holds for $\alpha([a,b])$. It follows that $\alpha([0,a])\subset\L^+$. But $\alpha([0,a])$ cannot enter the interior of $C$, since $\alpha$ is convex. Thus $\alpha([0,a])\subset\L$.  If $\beta([a,b])$ is not tangent to $\L$ at $a$, then $\beta$ has a corner at $a$, with angle $\psi_\beta$. 
Then $d\K_{\beta}(\{a\})=\pi-\psi_\beta>0$. By assumption $d\K_{\alpha}\geq d\K_{\beta}$. So $d\K_{\alpha}(\{a\})>0$ which means that $\alpha$ has a corner at $a$ as well, with angle $\psi_\alpha$. Then
$$
\pi-\psi_\alpha=d\K_{\alpha}(\{a\})\geq d\K_{\beta}(\{a\})=\pi-\psi_\beta.
$$
So $\psi_\alpha\leq\psi_\beta$. If $\psi_\alpha <\psi_\beta$, then $\alpha([0,a])$ enters the interior of $C$ which is not possible.
Hence $\psi_\alpha=\psi_\beta$. So $\alpha([0,a])$ is tangent to $\L$ at $a$, which again yields that $\alpha([0,a])\subset\L$. The case where $b<\ell$ is treated similarly.
\end{proof}

 Using the last three lemmas  we next establish the endpoint inequality in the following special case, by employing the moment arm identity and a minimization argument:

\begin{lemma}\label{lem:nonclosed-model}
Let $\alpha$, $\beta \colon [0,\ell]\to\M^2_k$ be unit speed convex curves with
$\alpha(0)\ne\alpha(\ell)$. Suppose that
$
\kappa_\beta\leq\kappa_\alpha.
$
Then the endpoint inequality \eqref{eq:schur} holds.
Furthermore, if equality holds in \eqref{eq:schur}, then $\alpha$ and $\beta$ are congruent.
\end{lemma}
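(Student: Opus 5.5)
We orient $\alpha$ and $\beta$ so that both turn left. Then $d\K_\alpha,d\K_\beta\geq0$, and the hypothesis $|d\K_\beta|\leq|d\K_\alpha|$ becomes $0\leq d\K_\beta\leq d\K_\alpha$. The plan is a minimization over an order interval of curvature measures. Let $\mathcal F$ be the set of unit speed curves $\gamma\colon[0,\ell]\to\M^2_k$ with the same initial point and direction as $\alpha$, such that $\gamma$ is convex and
$$d\K_\beta\leq d\K_\gamma\leq d\K_\alpha .$$
Each such $\gamma$ exists and is determined by its measure, by Proposition~\ref{prop:prescribed-curvature}. The family is nonempty, since it contains $\alpha$ and a congruent copy of $\beta$. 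We will minimize $L(\gamma):=|\gamma(0)\gamma(\ell)|$ over $\mathcal F$ and show that every minimizer is congruent to $\alpha$. This gives $|\alpha(0)\alpha(\ell)|=\min_{\mathcal F}L\leq|\beta(0)\beta(\ell)|$, and in the equality case $\beta$ is itself a minimizer, hence congruent to $\alpha$.

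\emph{Existence of a minimizer.} Measures in the order interval have uniformly bounded mass, so a minimizing sequence has a weak-$*$ convergent subsequence whose cumulative functions converge in $L^1$. The limit $\mu$ still satisfies $d\K_\beta\leq\mu\leq d\K_\alpha$. Its atoms are at most those of $d\K_\alpha$, which are $<\pi$, and $\mu$ has no atoms at $0$ or $\ell$. By Lemma~\ref{lem:continuous} the curves converge uniformly to the curve $\gamma$ with $d\K_\gamma=\mu$. Convexity passes to the limit: $\gamma$ is locally convex by Lemma~\ref{lem:local-convexity}, and its convex hull is the Hausdorff limit of the hulls, so $\gamma$ lies on the boundary of its hull. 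Simplicity of the limit needs a separate argument. If the limit were closed, or had a self-touching, we would use total curvature bounds via Gauss--Bonnet, as in Lemma~\ref{lem:local-convexity}, together with the fact that $\gamma$ is a limit of curves trapped by the nonclosed, convex and hence non-self-touching curve $\alpha$ data. I flag this degenerate case, where in particular $L=0$ must be excluded, as delicate.

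\emph{Variational step.} Let $\gamma$ be a minimizer and set $\rho:=\K_\alpha-\K_\gamma$, so $d\rho\geq0$. For $0\leq r\leq1$, the curves $\gamma_r$ with $d\K_{\gamma_r}=d\K_\gamma+r\,d\rho$ stay in the order interval. By Theorem~\ref{thm:moment-id}, applied between $\gamma_r$ and $\gamma$ for small $r$ and using continuity of the integrand in $r$,
$$
L(\gamma_r)-L(\gamma)=-\int_0^r\MA(\gamma_s,\rho)\,ds .
$$
Since $\gamma$ is convex, turns left and is nonclosed, we have $m(\gamma(0)\gamma(t)\gamma(\ell))\geq0$, so $\MA(\gamma,\rho)\geq0$. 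If $\MA(\gamma,\rho)>0$, then for small $r$ the curve $\gamma_r$ has strictly smaller endpoint distance. It remains to check that $\gamma_r$ stays convex for small $r$. It is locally convex by Lemma~\ref{lem:local-convexity}, and it is simple with endpoints on $\partial\conv(\gamma_r)$ by closeness to $\gamma$. Lemma~\ref{lem:convex} then gives convexity of $\gamma_r$, contradicting minimality.

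\emph{Conclusion of the variational step.} Hence $\MA(\gamma,\rho)=0$. Since the integrand is nonnegative, $d\rho$ is supported on the set where $\gamma(t)$ lies on the line $\L$ through $\gamma(0)$ and $\gamma(\ell)$. By convexity, this set is an initial interval $[0,a]$ together with a final interval $[b,\ell]$. Off these intervals $d\K_\gamma=d\K_\alpha$, so $\gamma|_{[a,b]}$ and $\alpha|_{[a,b]}$ are congruent. Lemma~\ref{lem:corners}, with $\gamma$ in the role of $\beta$, then gives that $\alpha$ and $\gamma$ are congruent. If $\gamma$ is a geodesic, a direct check works instead: then $d\K_\alpha$ is concentrated where $\gamma$ lies on $\L$, which forces $\alpha$ to be a geodesic as well.

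The main obstacle I expect is justifying that small perturbations toward $\alpha$ preserve convexity, in particular that the endpoints remain on the hull boundary, and handling degenerate minimizers (closed curves, or curves tangent to $\L$ at the endpoints). If the direct argument fails, my first fallback is to restrict the perturbation $d\rho$ to a compact subinterval of $(a,b)$ on which the moment arm is bounded below, where convexity is stable.
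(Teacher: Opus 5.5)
Your overall scheme matches the paper's: minimize $|\gamma(0)\gamma(\ell)|$ over convex curves whose curvature is dominated by $d\K_\alpha$, push a non-congruent minimizer toward $\alpha$ via the moment arm identity, and invoke Lemma~\ref{lem:corners} when $\MA(\gamma,\rho)=0$. However, the two steps that carry the real difficulty are not established.

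\textbf{First gap: convexity of $\gamma_r$.} Your claim that the endpoints of $\gamma_r$ stay on $\partial\conv(\gamma_r)$ ``by closeness to $\gamma$'' is false in general. Suppose the minimizer has end pieces lying on the endpoint line $\L$ and running outward, as in Figure~\ref{fig:lasso}. Then any curvature added in $(a,b)$ rotates the piece $\gamma([b,\ell])\subset\L$ about a point off $\L$. As a result, an endpoint of $\gamma_r$ falls into the interior of $\conv(\gamma_r)$ for every small $r>0$ (Figure~\ref{fig:arcs}). Your fallback of localizing $d\rho$ in a compact subinterval of $(a,b)$ fails for the same reason.

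The paper does not try to keep $\gamma_r$ convex. It first shows that $\gamma_r$ is simple and locally convex; simplicity uses the Gauss--Bonnet estimate of Lemma~\ref{lem:local-convexity} to exclude small loops, which $C^0$ closeness alone does not give. It then performs surgery. The maximal end arcs $\gamma_r([0,a_r))$ and $\gamma_r((b_r,\ell])$ lying in the interior of the hull are replaced by segments of the same lengths on the chord $\gamma_r(a_r)\gamma_r(b_r)$. There is room for this because $|\partial\conv(\gamma_r)|\to|\partial\conv(\gamma)|>\ell$. The new curve is convex by Lemma~\ref{lem:convex} and has curvature at most $d\K_{\gamma_r}\leq d\K_\alpha$. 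By the triangle inequality its endpoint distance is at most $|\gamma_r(0)\gamma_r(\ell)|<L(\gamma)$, contradicting minimality.

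This repair only works in the class $0\leq d\K_\gamma\leq d\K_\alpha$. The surgered curve is flat on the replaced pieces and has smaller corners at $a_r,b_r$, so it generally violates your lower bound $d\K_\beta\leq d\K_\gamma$. That lower bound buys nothing, since a copy of $\beta$ lies in the larger class anyway, and it should be dropped.

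\textbf{Second gap: a closed minimizer.} You flag this possibility but do not resolve it. The variational argument is useless there: $L=0$ is an absolute minimum, and $\MA(\gamma,\rho)=0$ by definition. Nor can closed curves in the class be excluded a priori by a ``trapping'' argument, since their nonexistence is essentially the content of the lemma.

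The paper handles this by continuation in the length. Let $A$ be the set of $t\in(0,\ell]$ for which the inequality holds for $\alpha|_{[0,t]}$; this is automatic whenever the minimizer on $[0,t]$ is nonclosed.
\begin{itemize}
\item One has $(0,\delta)\subset A$. A closed competitor on a short interval would need total curvature roughly at least $\pi$ by Gauss--Bonnet, while $\K_\alpha(t)\to0$.
\item One has $\sup A\in A$, by restricting minimizers.
\item Suppose $\ol t=\sup A<\ell$, and take $\epsilon<|\alpha(0)\alpha(\ol t)|$. A closed minimizer on $[0,\ol t+\epsilon]$ would restrict to an admissible curve on $[0,\ol t]$ with endpoint distance at most $\epsilon$, contradicting $\ol t\in A$. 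So the minimizer on $[0,\ol t+\epsilon]$ is nonclosed, which puts $\ol t+\epsilon$ in $A$, a contradiction.
\end{itemize}
This gives the inequality on $[0,\ell]$. In the equality case, $\beta$ itself is a nonclosed minimizer, and the variational argument (with the surgery above) applies to it.
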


\begin{proof}
After composition with  isometries of $\M^2_k$, we may assume that $\beta$ has the same initial
conditions as $\alpha$, and $d\K_\alpha$, $d\K_\beta\geq 0$. Let $\mathcal C$ be
the family of unit speed convex curves $\gamma\colon[0,\ell]\to\M^2_k$ with the same initial conditions as 
$\alpha$ and satisfying
$
0\leq d\K_\gamma\leq d\K_\alpha.
$
We claim that there is a curve $\gamma_0\in\mathcal C$ with minimal endpoint distance. Then we
suppose, towards a contradiction, that
\be\label{eq:gamma0-alpha}
|\gamma_0(0)\gamma_0(\ell)|<|\alpha(0)\alpha(\ell)|.
\ee
The argument will be presented in four parts. First we establish the existence of $\gamma_0$, then we consider \eqref{eq:gamma0-alpha} in the case where $\gamma_0$ is not closed, followed by the general case. Finally we characterize the equality case in the endpoint inequality.

\smallskip
(\emph{Part I}) To see that $\gamma_0$ exists, let
$\gamma_m\in\mathcal C$ be a minimizing sequence for the endpoint distance.
The functions $\K_{\gamma_m}$ and $\K_\alpha-\K_{\gamma_m}$ are nondecreasing
and uniformly bounded. So, by Helly's selection theorem \cite{rudin1976}, after
passing to a subsequence, $\K_{\gamma_m}$ converges pointwise to a function
$\K$ such that $\K$ and $\K_\alpha-\K$ are again nondecreasing. Hence the
increments of $\K$ are nonnegative and bounded by those of $\K_\alpha$; since
$\K_\alpha$ is left-continuous, so is $\K$, and $0\leq d\K\leq d\K_\alpha$. In particular $d\K(\{0\})=0=d\K(\{\ell\})$, and $d\K(\{t\})<\pi$ on 
$(0,\ell)$, since $\alpha$ is convex. By
Proposition~\ref{prop:prescribed-curvature}, there is a unit speed curve
$\gamma_0$ with the same initial conditions as $\alpha$ and
$d\K_{\gamma_0}=d\K$. By the dominated convergence theorem and
Lemma~\ref{lem:continuous}, $\gamma_m\to\gamma_0$ uniformly. So $\gamma_0$
minimizes the endpoint distance. It remains to show that $\gamma_0$ is convex.

Since $\gamma_0$ is unit speed, $\conv(\gamma_0)$ cannot be a single point. Thus, if $\conv(\gamma_0)$ has empty interior, then it has to be a geodesic segment covered by $\gamma_0$. By Lemma \ref{lem:local-convexity}, $\gamma_0$ is locally one-to-one. So $\gamma_0$ is a geodesic segment. In particular it is convex and we are done. Assume then that $\conv(\gamma_0)$ has interior points. Since $\conv(\gamma_m)\to \conv(\gamma_0)$
in the Hausdorff distance, and $\gamma_m\subset \partial\conv(\gamma_m)$, it follows that $\gamma_0\subset \partial\conv(\gamma_0)$. Since $|\partial\conv(\gamma_m)|\geq\length(\gamma_m)=\ell$, we have $|\partial\conv(\gamma_0)|\geq\ell$. Thus $\gamma_0$ is one-to-one on $(0,\ell)$, since it is locally one-to-one, and we conclude again that  $\gamma_0$ is convex.

\smallskip
(\emph{Part II}) Assume that $\gamma_0$ is not closed. Then the endpoint geodesic 
$\L=\L_{\gamma_0(0)\gamma_0(\ell)}$ is well-defined.  Put
$
d\lambda:=d\K_\alpha-d\K_{\gamma_0}.
$
If $d\lambda\equiv 0$, then $\gamma_0$ and $\alpha$ are congruent by
Proposition~\ref{prop:prescribed-curvature}, contrary to \eqref{eq:gamma0-alpha}. Thus $d\lambda\not\equiv0$. Also note that $\gamma_0$
is not a geodesic, since otherwise
$
|\gamma_0(0)\gamma_0(\ell)|=\ell\geq|\alpha(0)\alpha(\ell)|.
$
  We claim that 
$$
\MA(\gamma_0,\lambda)>0.
$$
 Suppose to the contrary that $\MA(\gamma_0,\lambda)=0$. 
 Since $d\K_{\gamma_0}\geq0$, the curve $\gamma_0$ turns left, as noted in
Section~\ref{sec:moment-arm}. So the moment arm at every point of
$\gamma_0$ not lying on $\L$ is positive. Thus
$d\lambda$ is supported on $\gamma_0^{-1}(\L)$. Since $\gamma_0$ is
convex and is not a geodesic, there exist $0\leq a_0<b_0\leq\ell$ such that
$\gamma_0([0,a_0])\cup\gamma_0([b_0,\ell])$ lies on $\L$, while
$\gamma_0((a_0,b_0))$ lies off $\L$, see Figure \ref{fig:lasso}. Hence $d\lambda=0$ on $(a_0,b_0)$, and
so $\gamma_0$ and $\alpha$ have the same curvature measure on $(a_0,b_0)$. By
Proposition~\ref{prop:prescribed-curvature}, $\gamma_0|_{[a_0,b_0]}$ and
$\alpha|_{[a_0,b_0]}$ are congruent. Lemma~\ref{lem:corners}, applied with
$\beta=\gamma_0$, shows that $\alpha$ and $\gamma_0$ are congruent. This
contradicts \eqref{eq:gamma0-alpha}. So
$\MA(\gamma_0,\lambda)>0$ as claimed.

\begin{figure}[h]
\begin{overpic}[height=0.85in]{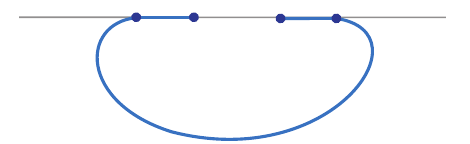}
\put(20,6){\small $\gamma_0$}
\put(0,26){\small $\L$}
\put(19,30){\small $\gamma_0(a_0)$}
\put(36,30){\small $\gamma_0(0)$}
\put(55,30){\small $\gamma_0(\ell)$}
\put(70,30){\small $\gamma_0(b_0)$}
\end{overpic}
\caption{}\label{fig:lasso}
\end{figure}

For $s\geq0$, let $\gamma_s$ be the unit speed curve with the same initial
conditions as $\gamma_0$ and curvature measure
$
d\K_{\gamma_s}:=d\K_{\gamma_0}+s\,d\lambda.
$
Since $\MA(\gamma_0,\lambda)>0$, we have 
$
\MA(\gamma_s,\lambda)>0
$
for  small $s>0$ by continuity. Therefore, by Theorem~\ref{thm:moment-id},
with the sign reversed since curvature is being added,
$$
|\gamma_s(0)\gamma_s(\ell)|-|\gamma_0(0)\gamma_0(\ell)|
=
-\int_0^s\MA(\gamma_u,\lambda)\,du<0
$$
for small $s>0$. If $\gamma_s$ is convex, then $\gamma_s\in\mathcal C$ and
$|\gamma_s(0)\gamma_s(\ell)|<|\gamma_0(0)\gamma_0(\ell)|$, contradicting the choice of $\gamma_0$. 
So suppose that $\gamma_s$ is not convex for any small
$s>0$.

Since $\gamma_0$ is simple, we may choose $s$ so small that $\gamma_s$ is simple. Indeed, since
$\gamma_s\to\gamma_0$ uniformly by Lemma~\ref{lem:continuous},  any self-intersection loop of $\gamma_s$ must
be small, but since $d\K_{\gamma_s}\leq d\K_\alpha$, small loops are
ruled out by the Gauss-Bonnet argument in the proof of
Lemma~\ref{lem:local-convexity}.
Moreover, since 
$$
d\K_{\gamma_s}=(1-s)\,d\K_{\gamma_0}+s\,d\K_\alpha,
$$
$d\K_{\gamma_s}$ is nonnegative
with atoms of mass less than $\pi$, since $\alpha$ and $\gamma_0$ are convex.
So $\gamma_s$ is locally convex by Lemma~\ref{lem:local-convexity}.
Since $\gamma_s$ is not convex, at least one of the endpoints of $\gamma_s$ lies in the interior of $\conv(\gamma_s)$ by Lemma \ref{lem:convex}; see Figure \ref{fig:arcs}.  
\begin{figure}[h]
\begin{overpic}[height=0.9in]{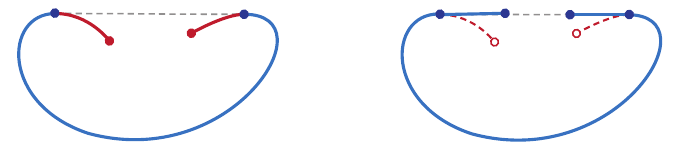}
\put(5,2){\small $\gamma_s$}
\put(13,12){\small $\gamma_s(0)$}
\put(26,13){\small $\gamma_s(\ell)$}
\put(4.5,21){\small $\gamma_s(a_s)$}
\put(31,21){\small $\gamma_s(b_s)$}
\put(61,2){\small $\ol\gamma_s$}
\put(70,21){\small $\ol\gamma_s(0)$}
\put(80,21){\small $\ol\gamma_s(\ell)$}
\end{overpic}
\caption{}\label{fig:arcs}
\end{figure}
Let $\gamma_s([0,a_s))$, $\gamma_s((b_s,\ell])$ be the maximal  subarcs  in the interior of $\conv(\gamma_s)$ for $0\leq a_s<b_s\leq\ell$. Note that $\gamma_s|_{[a_s,b_s]}$ is convex by Lemma~\ref{lem:convex}, since its
endpoints lie on $\partial\conv(\gamma_s)=\partial\conv(\gamma_s([a_s,b_s]))$.
Hence $\partial\conv(\gamma_s)$ consists of $\gamma_s([a_s,b_s])$ together with
the segment $\gamma_s(a_s)\gamma_s(b_s)$. 

Since $\gamma_s\to\gamma_0$ uniformly, by Lemma \ref{lem:continuous},
the convex hulls converge, and therefore the perimeters
$
|\partial\conv(\gamma_s)|\to|\partial\conv(\gamma_0)|.
$
But $\gamma_0$ is convex and nonclosed, so
$
|\partial\conv(\gamma_0)|>\ell.
$
Thus, for $s$ sufficiently small,
$$
|\gamma_s(a_s)\gamma_s(b_s)|
=
|\partial\conv(\gamma_s)|-(b_s-a_s)
>
\ell-(b_s-a_s)
=
a_s+(\ell-b_s).
$$
So we may replace $\gamma_s([0,a_s))$ and $\gamma_s((b_s,\ell])$ by subarcs of
the same length on $\gamma_s(a_s)\gamma_s(b_s)$, adjacent to $\gamma_s(a_s)$ and
$\gamma_s(b_s)$ respectively (as shown in the right diagram in Figure \ref{fig:arcs}).  The resulting curve $\ol\gamma_s$ is simple and locally convex. Hence it is convex by Lemma \ref{lem:convex}. It also has length $\ell$, and satisfies
$$
d\K_{\ol\gamma_s}\leq d\K_{\gamma_s}\leq d\K_\alpha.
$$
Indeed the measures
agree on $(a_s,b_s)$, vanish or dominate on the flat parts, and at $a_s$, $b_s$
any possible corner angle of $\ol\gamma_s$ is at least that of $\gamma_s$, which yields that the exterior angle of $\ol\gamma_s$ is not greater than that of $\gamma_s$.
By construction, $|\gamma_s(a_s)\ol\gamma_s(0)|=a_s$, and $|\ol\gamma_s(\ell)\gamma_s(b_s)|=\ell-b_s$. Thus, since $\ol\gamma_s(a_s)=\gamma_s(a_s)$, $\ol\gamma_s(0)$, $\ol\gamma_s(\ell)$, and $\ol\gamma_s(b_s)=\gamma_s(b_s)$ all lie on the same line in that order, we have
\begin{multline*}
a_s+|\ol\gamma_s(0)\ol\gamma_s(\ell)|+(\ell-b_s)
=
|\ol\gamma_s(a_s)\ol\gamma_s(b_s)|
=
|\gamma_s(a_s)\gamma_s(b_s)|
\leq\\
|\gamma_s(a_s)\gamma_s(0)|+|\gamma_s(0)\gamma_s(\ell)|+|\gamma_s(\ell)\gamma_s(b_s)|
\leq
a_s+|\gamma_s(0)\gamma_s(\ell)|+(\ell-b_s).
\end{multline*}
So we obtain
$
|\ol\gamma_s(0)\ol\gamma_s(\ell)|
\leq
|\gamma_s(0)\gamma_s(\ell)|
<
|\gamma_0(0)\gamma_0(\ell)|.
$
After composing with an isometry, we may assume that $\ol\gamma_s$ has the same initial conditions as $\alpha$.
Thus $\ol\gamma_s\in\mathcal C$, again contradicting the choice of
$\gamma_0$. So the endpoint inequality holds when
$\gamma_0(0)\ne\gamma_0(\ell)$.

\smallskip
(\emph{Part III})
Now we remove the assumption that $\gamma_0(0)\ne\gamma_0(\ell)$, which we achieve by applying the above discussion to subarcs $\alpha|_{[0,t]}$. Let $\mathcal C_t$ be the corresponding class of curves, and  $\gamma_0^t\in\mathcal C_t$ be a minimizer for endpoint
distance. Let 
$$
A:=\Big\{t\in(0,\ell]\colon \left|\gamma_0^t(0)\gamma_0^t(t)\right|=|\alpha(0)\alpha(t)|\Big\}.
$$
Then $t\in A$ if and only if $\alpha|_{[0,t]}$ satisfies the endpoint inequality, which is the case as soon as $\gamma_0^t(0)\neq\gamma_0^t(t)$. The set $A$ contains a small interval $(0,\delta)$. Indeed, if
$\gamma_0^t$ were closed for a sequence $t\to0$, then its total curvature $\K_{\gamma_0^t}(t)$ would tend to at least $\pi$, as discussed in the proof of Lemma \ref{lem:local-convexity}, while
$
\K_{\gamma_0^t}(t)\leq\K_\alpha(t)\to0.
$

Put $\ol t:=\sup A$.  Then $\ol t\in A$. Indeed
if $t_i\in A$ and $t_i\to\ol t^{\,-}$, then
$\gamma_0^{\ol t}|_{[0,t_i]}\in\mathcal C_{t_i}$, and so
$
|\gamma_0^{\ol t}(0)\gamma_0^{\ol t}(t_i)|
\geq
|\alpha(0)\alpha(t_i)|.
$
Letting $i\to\infty$ gives one inequality, and the reverse one follows from
$\alpha|_{[0,\ol t]}\in\mathcal C_{\ol t}$. Suppose  that $\ol t<\ell$. Choose $\epsilon>0$ so small that
$\ol t+\epsilon\leq\ell$ and
$
\epsilon<|\alpha(0)\alpha(\ol t)|.
$
If
$
\gamma_0^{\ol t+\epsilon}(0)=\gamma_0^{\ol t+\epsilon}(\ol t+\epsilon),
$
then
$
|\gamma_0^{\ol t+\epsilon}(0)\gamma_0^{\ol t+\epsilon}(\ol t)|
\leq
\epsilon
<
|\alpha(0)\alpha(\ol t)|,
$
contradicting $\ol t\in A$. Hence $\gamma_0^{\ol t+\epsilon}$ is not closed,
and therefore $\ol t+\epsilon\in A$, a contradiction.
Thus $\ol t=\ell$, which completes the proof of the desired inequality.

\smallskip
(\emph{Part IV})
Finally suppose that $|\beta(0)\beta(\ell)|=|\alpha(0)\alpha(\ell)|$.
Then $\beta$ is a minimizer in $\mathcal C$, and $\beta$ is not closed,
since $\alpha$ is not. Suppose, towards a contradiction, that
$d\lambda:=d\K_\alpha-d\K_\beta\neq0$. Then $\beta$ is not a geodesic;
otherwise $\ell=|\beta(0)\beta(\ell)|=|\alpha(0)\alpha(\ell)|$, so
$\alpha$ would be a geodesic as well, and $d\lambda=0$. So we may run
the argument above with $\gamma_0=\beta$. If $\MA(\beta,\lambda)>0$,
we obtain a curve in $\mathcal C$ with endpoint distance strictly less
than the minimum $|\alpha(0)\alpha(\ell)|$, which is impossible. If
$\MA(\beta,\lambda)=0$, then Lemma~\ref{lem:corners} shows that
$\alpha$ and $\beta$ are congruent, whence $d\K_\beta=d\K_\alpha$,
contradicting $d\lambda\neq0$. Hence $d\lambda=0$, and $\alpha$,
$\beta$ are congruent by Proposition~\ref{prop:prescribed-curvature}.
\end{proof}

\subsection{The general case}\label{subsec:general}
Finally we establish the general case of Proposition \ref{prop:main-model} after two more observations.
For a closed unit speed curve $\alpha\colon[0,\ell]\to M^n$ in a Riemannian manifold, let
$$
\phi_\alpha:=\angle\big(\alpha'_-(\ell),\alpha'_+(0)\big)
$$
be the \emph{closing exterior angle}. The following result was established by Ni \cite{ni2023} in the piecewise smooth case. We verify that the same argument extends to the general case by approximation. 

\begin{lemma}\label{lem:ni}
Let $\alpha,\beta\colon[0,\ell]\to\S^2$ be closed convex unit speed curves. Suppose that
$
\kappa_\beta\leq \kappa_\alpha,
$
and $\phi_\beta\leq \phi_\alpha$.
Then $\alpha$ and $\beta$ are congruent.
\end{lemma}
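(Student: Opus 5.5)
Since both curves are closed, convex and turn left (after orientation), the plan is to compare total curvatures through Gauss--Bonnet and then use area monotonicity on $\S^2$ to force equality everywhere. For a closed convex curve $\gamma$ of length $\ell$ in $\S^2$ bounding the convex domain $\Omega_\gamma$, Gauss--Bonnet gives
$$
d\K_\gamma\big((0,\ell)\big)+\phi_\gamma = 2\pi-|\Omega_\gamma|.
$$
This holds first for polygons or piecewise smooth curves, and then for general curves of finite total curvature by the polygonal and atomic approximations used in the proof of Lemma~\ref{lem:local-convexity}, since areas and total curvatures converge along such approximations. The hypotheses $d\K_\beta\leq d\K_\alpha$ on $(0,\ell)$ and $\phi_\beta\leq\phi_\alpha$ then give $|\Omega_\beta|\geq|\Omega_\alpha|$. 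Equality in area forces $d\K_\beta=d\K_\alpha$ on $(0,\ell)$ and $\phi_\beta=\phi_\alpha$. Proposition~\ref{prop:prescribed-curvature}, with matched initial conditions, then makes $\alpha$ and $\beta$ congruent.

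So the substance is to prove $|\Omega_\beta|\leq|\Omega_\alpha|$, i.e.\ that less curvature cannot enclose more area at fixed length. Following Ni, I would first treat the smooth or polygonal case and then pass to the limit, as the statement suggests. Concretely, I would approximate $d\K_\alpha$ and $d\K_\beta$ by atomic measures $\mu_m\leq\nu_m$ (sums over common partition cells, so that the domination is preserved), keeping the closing angles ordered. The resulting polygonal curves are nearly closed, and one closes them up with a short segment. Lemma~\ref{lem:continuous} gives uniform convergence, hence convergence of the hulls and of the areas.

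For the polygonal inequality, I would use the moment arm machinery with the curves cut open at a point. Choose $t_0$ and view $\alpha|_{[t_0,t_0+\ell]}$ (periodically) as an open arc whose endpoints coincide. Interpolating the curvature from $\beta$ to $\alpha$ via Theorem~\ref{thm:moment-id} does not directly control area. An alternative, closer to Ni, is to write the area of a convex polygon through its support function about a fixed interior point, which is linear in how the exterior-angle budget is distributed. One then shows that shifting the budget toward larger total turning decreases area. The key identity here is $|\Omega|=2\pi-\int d\K-\phi$, which already makes area an affine function of total curvature; the difficulty is only that $\beta$ must remain closed.

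Indeed the main obstacle I expect is the closing constraint. A curve with curvature dominated by $\alpha$'s, the same length, and a smaller closing angle need not a priori be closed, and the direct Gauss--Bonnet comparison already uses closedness of both curves. On reflection the argument is simpler than the above suggests. Since both curves are closed with the same length, Gauss--Bonnet gives $|\Omega_\beta|-|\Omega_\alpha| = d\K_\alpha((0,\ell))-d\K_\beta((0,\ell))+\phi_\alpha-\phi_\beta\geq0$. The missing reverse inequality should come from Lemma~\ref{lem:nonclosed-model}, applied to the open arcs $\alpha|_{[0,\ell-\epsilon]}$ and $\beta|_{[0,\ell-\epsilon]}$. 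Since the two closing chords of length at most $\epsilon$ differ only in the angles $\phi$, I would compare $|\beta(0)\beta(\ell-\epsilon)|$ with $|\alpha(0)\alpha(\ell-\epsilon)|$ to second order in $\epsilon$. Strict curvature excess would then make the $\beta$ endpoint gap strictly larger at order $\epsilon$, via the moment arm identity, contradicting that both curves close up. I would try this expansion first, handling atoms at the cut point by choosing the cut at a non-atom.
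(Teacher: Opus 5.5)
Your Gauss--Bonnet reduction is correct: with $\Omega_\gamma$ the domain bounded by $\gamma$, the hypotheses give $|\Omega_\beta|\geq|\Omega_\alpha|$, and equality would force $d\K_\beta=d\K_\alpha$ and $\phi_\beta=\phi_\alpha$. But the reverse inequality $|\Omega_\beta|\leq|\Omega_\alpha|$ is the entire content of the lemma, and none of your routes proves it.

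The polygonal and support-function ideas are not carried out. Closing the nearly closed approximating polygons with short segments also introduces corners whose angles you do not control.

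The final ``simpler'' argument cannot work. Since $\alpha(0)=\alpha(\ell)$ and $\beta(0)=\beta(\ell)$, the gaps $|\alpha(0)\alpha(\ell-\epsilon)|=|\alpha(\ell)\alpha(\ell-\epsilon)|$ and $|\beta(0)\beta(\ell-\epsilon)|=|\beta(\ell)\beta(\ell-\epsilon)|$ depend only on the curves over $[\ell-\epsilon,\ell]$. Both equal $\epsilon+o(\epsilon)$, so curvature excess elsewhere cannot produce a discrepancy of order $\epsilon$. A strictly larger $\beta$-gap is exactly what Lemma~\ref{lem:nonclosed-model} predicts, and it is perfectly compatible with both curves closing up. Nor does the moment arm identity give a quantitative lower bound. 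The interpolants $\alpha_r$ need not be convex, so $\MA(\alpha_r,\K_\alpha-\K_\beta)$ is unsigned. This is why Lemma~\ref{lem:nonclosed-model} obtains nonnegativity only through a minimization argument.

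More structurally, all that Lemma~\ref{lem:nonclosed-model} can tell you about closed curves is that $\beta$ majorizes $\alpha$. By Lemma~\ref{lem:majorize}, this yields $|\conv(\beta)|\geq|\conv(\alpha)|$ and $\phi_\beta\leq\phi_\alpha$, which is the same direction Gauss--Bonnet already gives when $k>0$. That is precisely why the paper's proof of Proposition~\ref{prop:main-model} concludes by Gauss--Bonnet only for $k\leq0$ and needs the present lemma for $k>0$.

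The missing ingredient is Ni's central projection, which the paper uses:
\begin{itemize}
\item Choose a plane $\{\langle x,u\rangle=1\}$ meeting the cone over $\alpha$ in a convex curve, and rescale \emph{both} curves by the same factor $R(t)=\langle\alpha(t),u\rangle^{-1}$.
\item Then $\bar\alpha=R\alpha$ is a closed convex plane curve, and $\bar\beta=R\beta$ is a closed space curve with the same arclength function.
\item One shows $\kappa_{\bar\beta}\leq\kappa_{\bar\alpha}$ and $\phi_{\bar\beta}\leq\phi_{\bar\alpha}$: Ni's computation handles the piecewise smooth case, and atomic approximation with lower semicontinuity of total curvature handles the general case.
\item Fenchel's theorem then gives $2\pi\leq\kappa_{\bar\beta}([0,\ell])+\phi_{\bar\beta}\leq\kappa_{\bar\alpha}([0,\ell])+\phi_{\bar\alpha}=2\pi$. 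This forces $\bar\beta$ to be planar convex with $\kappa_{\bar\beta}=\kappa_{\bar\alpha}$, hence congruent to $\bar\alpha$.
\end{itemize}
The point is that Fenchel supplies a lower bound on total curvature that is independent of enclosed area, which is exactly what spherical Gauss--Bonnet lacks.
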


\begin{proof}
Choose an affine plane $\Pi\subset\R^3$ which
does not pass through the origin and intersects the cone over $\alpha$ in a
convex plane curve. Write
$
\Pi=\{x\in\R^3:\langle x,u\rangle=1\},
$
where $\langle\alpha,u\rangle>0$, and set
$
R(t):=\langle\alpha(t),u\rangle^{-1}.
$
Define
$
\bar\alpha(t):=R(t)\alpha(t),
$
$        
\bar\beta(t):=R(t)\beta(t).
$
Then $\bar\alpha$ is a closed convex plane curve in $\Pi$, and $\bar\beta$ is a
closed curve in $\R^3$. Moreover, $\bar\alpha$ and $\bar\beta$ have the same
arclength function, since, for almost every $t$,
$$
 \left|\bar\alpha'(t)\right|^2=\big(R'(t)\big)^2+R(t)^2=\left|\bar\beta'(t)\right|^2.
$$

We claim that $\kappa_{\bar\beta}\leq \kappa_{\bar\alpha}$, which  holds in the piecewise smooth case \cite[pp. 111--112]{ni2023}. 
For the general case, note that since $\alpha$ and $\beta$ are convex, after composing them with reflections of $\S^2$ if necessary, we may assume that $d\K_\alpha$ and $d\K_\beta$ are nonnegative. Then $\kappa_\beta\leq\kappa_\alpha$ yields $d\K_\beta\leq d\K_\alpha$. Now let $I=[a,b]\subset(0,\ell)$ be an interval whose endpoints are not atoms of $d\K_\alpha$ or $d\K_\beta$. Put
$$
\mu:=d\K_\alpha|_I, \qquad \nu:=d\K_\beta|_I,\qquad \lambda:=\mu-\nu .
$$
Choose partitions
$
a=t_0^m<\cdots<t_{N_m}^m=b
$
with mesh tending to zero, and no partition point an atom of $\mu$ or $\nu$.
On each interval $(t_{i-1}^m,t_i^m)$ choose finite nonnegative atomic measures
$\nu_i^m$ and $\lambda_i^m$, with disjoint supports, such that every atom has
mass less than $\pi$ and
$
\nu_i^m((t_{i-1}^m,t_i^m))=\nu([t_{i-1}^m,t_i^m)),
$
$
\lambda_i^m((t_{i-1}^m,t_i^m))=\lambda([t_{i-1}^m,t_i^m)).
$
Set
$$
\nu_m:=\sum_i\nu_i^m,\qquad
\lambda_m:=\sum_i\lambda_i^m,\qquad
\mu_m:=\nu_m+\lambda_m .
$$
Then $\nu_m\leq\mu_m$, all atoms of $\mu_m$ and $\nu_m$ have mass less than
$\pi$, and the cumulative functions of $\mu_m$ and $\nu_m$ converge in
$L^1(I)$ to those of $\mu$ and $\nu$. By Proposition~\ref{prop:prescribed-curvature}, applied on $I$ with the initial data of $\alpha|_I$ and $\beta|_I$, there are piecewise smooth  curves $\alpha_m,\beta_m\colon I\to\S^2$ with curvature measures $\mu_m$, $\nu_m$. By Lemma~\ref{lem:continuous}, $\alpha_m\to\alpha|_I$ and $\beta_m\to\beta|_I$, with convergence of tangent fields in $L^1$. 
Set
$
R_m(t):=\langle\alpha_m(t),u\rangle^{-1},
$
$
\bar\alpha_m:=R_m\alpha_m,
$
$ 
\bar\beta_m:=R_m\beta_m.
$
The piecewise smooth case gives
$$
 \kappa_{\bar\beta_m}(I)\leq \kappa_{\bar\alpha_m}(I).
$$
Since $\alpha_m\to\alpha|_I$ and $\beta_m\to\beta|_I$ uniformly, and $R_m\to R$ uniformly, we have $\bar\beta_m\to\bar\beta$ uniformly on $I$. Thus, by lower semicontinuity of total curvature under uniform convergence \cite[Thm.~5.1.1]{alexandrov-reshetnyak1989,karuwannapatana-maneesawarng2007},
$$
\kappa_{\bar\beta}(I)
\leq
\liminf_{m\to\infty}\kappa_{\bar\beta_m}(I).
$$
On the other hand, $\mu_m(I)=\mu(I)$. Hence Lemma~\ref{lem:continuous} gives convergence of the terminal tangent directions of $\alpha_m$ to that of $\alpha$. Thus the endpoint tangent directions of $\bar\alpha_m$ converge to those of $\bar\alpha$. Since $d\K_{\bar\alpha_m}\geq0$, as central projection preserves the sign of $d\K$, and $a$, $b$ are not atoms of the curvature measure of $\bar\alpha$,
$
\kappa_{\bar\alpha_m}(I)\to \kappa_{\bar\alpha}(I).
$
Consequently
$
\kappa_{\bar\beta}(I)
\leq
\kappa_{\bar\alpha}(I)
$
as claimed. Furthermore, Ni's jump-angle formula \cite[(3.6)]{ni2023}, applied at the closing point, also gives
$
\phi_{\bar\beta}\leq\phi_{\bar\alpha}.
$
The rest of the argument is just as in \cite{ni2023}. Namely we obtain
$$
2\pi
\leq
\kappa_{\bar\beta}([0,\ell])+\phi_{\bar\beta}
\leq
\kappa_{\bar\alpha}([0,\ell])+\phi_{\bar\alpha}
=
2\pi,
$$
where the first inequality is by Fenchel's theorem, since $\bar\beta$ is a closed space curve, and the last equality holds since $\bar\alpha$ is a closed convex planar curve. So $\kappa_{\bar\beta}([0,\ell])+\phi_{\bar\beta}=2\pi$, which means that $\bar\beta$ is a convex planar curve as well. Furthermore, $\kappa_{\bar\beta}= \kappa_{\bar\alpha}$. Thus, by Proposition \ref{prop:prescribed-curvature}, $\bar\alpha$ and $\bar\beta$ are congruent. Hence $\alpha$ and $\beta$ are congruent.
\end{proof}

Let $\alpha$, $\beta\colon[0,\ell]\to\M^2_k$ be closed unit speed curves. We say that $\beta$ \emph{majorizes} $\alpha$ if $|\beta(s)\beta(t)|\geq |\alpha(s)\alpha(t)|$
for all $s$, $t\in [0,\ell]$. Let $|\conv(\alpha)|$, $|\conv(\beta)|$ denote the areas of the convex hulls.

\begin{lemma}\label{lem:majorize}
Let $\alpha$, $\beta\colon[0,\ell]\to\M^2_k$ be unit speed closed convex curves. Suppose that $\beta$ majorizes $\alpha$. Then $|\conv(\beta)|\geq |\conv(\alpha)|$ and $\phi_\beta\leq\phi_\alpha$.
\end{lemma}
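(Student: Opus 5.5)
The plan is to handle the two assertions separately. The area inequality comes from a polygonal approximation together with a Cauchy-type monotonicity of area for convex polygons with prescribed side lengths. The angle inequality comes from a local comparison of triangles at the closing point.

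\emph{Area.} Fix a partition $0=t_0<t_1<\cdots<t_N=\ell$ and form the inscribed geodesic polygons $P_\alpha$ and $P_\beta$ with vertices $\alpha(t_i)$ and $\beta(t_i)$. Since $\alpha$ and $\beta$ are convex, these polygons are convex, as inscribed polygons in a convex curve taken in cyclic order. The side lengths of $P_\beta$ and all of its diagonals dominate the corresponding ones of $P_\alpha$. Triangulate both polygons from the vertex $t_0$ by the diagonals $t_0t_i$. Each triangle of $P_\beta$ then has all three sides at least as long as those of the matching triangle of $P_\alpha$.

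The area of a triangle in $\M^2_k$ is not monotone in all side lengths separately. So I would not argue triangle by triangle. Instead I would pass continuously from $P_\alpha$ to $P_\beta$ by lengthening one distance at a time. For a convex polygon, $\partial\,\text{Area}/\partial d_{ij}\geq0$ when a diagonal is increased while the others are kept (Cauchy/Schläfli-type lemma). The cleaner route is to use the known fact that for convex polygons in model planes, majorization of all pairwise distances implies larger area (the Sabitov/Alexander--Bishop-type inequality). Concretely, I would interpolate the squared-distance matrices (or $\cs_k$ of distances) linearly. For $k=0$ the interpolated configurations remain realizable in the plane because convex combinations of Euclidean distance matrices of convex polygons with the same combinatorics are realizable. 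This is the delicate step, and I expect it to be the main obstacle for $k\ne0$; I would try to reduce it to $k=0$ via the cone/projective trick used in Lemma~\ref{lem:ni}.

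Letting the mesh tend to zero, $|P_\alpha|\to|\conv(\alpha)|$ and $|P_\beta|\to|\conv(\beta)|$, which gives $|\conv(\beta)|\geq|\conv(\alpha)|$.

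\emph{Closing angle.} Let $p=\alpha(0)=\alpha(\ell)$ and take small $s>0$. Consider the triangle with vertices $\alpha(\ell-s)$, $p$, $\alpha(s)$, whose two sides at $p$ have lengths at most $s$ and which is asymptotically isoceles, with angle at $p$ tending to $\pi-\phi_\alpha$. The corresponding triangle for $\beta$ has its sides at $p$ also asymptotically equal to $s$, since unit speed gives $|\beta(0)\beta(s)|/s\to1$ from one-sided differentiability, and the same holds for $\alpha$. Its third side dominates, $|\beta(\ell-s)\beta(s)|\geq|\alpha(\ell-s)\alpha(s)|$.

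By the law of cosines in $\M^2_k$, the angle opposite a side is increasing in that side when the adjacent sides are fixed. Dividing by $s$ and letting $s\to0$ gives
$$
2\sin\big((\pi-\phi_\beta)/2\big)\geq 2\sin\big((\pi-\phi_\alpha)/2\big),
$$
hence $\phi_\beta\leq\phi_\alpha$, since both angles lie in $[0,\pi]$.

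The one point needing care is that the ratios of the adjacent sides to $s$ tend to $1$. This follows from the existence of $\alpha'_\pm$ and $\beta'_\pm$ and the unit speed assumption, both established in Section~\ref{sec:curvature-measures}.
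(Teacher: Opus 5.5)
Your closing-angle argument is correct and is essentially the paper's: both compute $\lim_{\epsilon\to0}|\beta(\epsilon)\beta(\ell-\epsilon)|/(2\epsilon)=\cos(\phi_\beta/2)$ and compare it with the same limit for $\alpha$ using majorization.

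\textbf{The area half has a genuine gap.} The polygon-level fact you want to invoke is false. Take the equilateral triangle with side $1$, which has area $\sqrt3/4\approx0.43$. The triangle with sides $1,1,1.99$ has every pairwise distance at least as large, yet its area is about $0.1$. So majorization of all vertex distances between convex polygons, in the same cyclic order, does not force larger area. The same example shows that your sign claim $\partial\,\mathrm{Area}/\partial d_{ij}\ge0$ fails: with two sides of a triangle fixed, the area decreases once the opposite angle exceeds $\pi/2$.

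The interpolation step also fails. If $D_x,D_y$ are squared-distance matrices of planar configurations $x_i,y_i$, then $(1-r)D_x+rD_y$ is realized by the points $(\sqrt{1-r}\,x_i,\sqrt r\,y_i)\in\R^4$. This configuration is in general not planar; a square and a trapezoid already give a $3$-dimensional one. The cone/projective trick of Lemma~\ref{lem:ni} does not transport distance inequalities, so it cannot reduce $k\neq0$ to $k=0$ either. In short, your argument uses only the vertex-distance majorization of the inscribed polygons, and that information alone does not control area.

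\textbf{The missing idea is to use the majorization globally through a Lipschitz extension, as the paper does.} Since $|\alpha(s)\alpha(t)|\le|\beta(s)\beta(t)|$, the correspondence $\beta(t)\mapsto\alpha(t)$ is $1$-Lipschitz. By the Lang--Schroeder generalization of Kirszbraun's theorem, it extends to a $1$-Lipschitz map $F\colon\conv(\beta)\to\M^2_k$. The restriction of $F$ to $\partial\conv(\beta)$ is a degree-one homeomorphism onto $\partial\conv(\alpha)$, so a degree argument gives $F(\conv(\beta))\supset\conv(\alpha)$. Since $1$-Lipschitz maps do not increase area, $|\conv(\beta)|\ge|\conv(\alpha)|$, with no polygonal approximation needed.
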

\begin{proof}
By the generalized Kirszbraun extension theorem, due to Lang-Schroeder
\cite{akp2024,lang-schroeder1997}, the correspondence
$\beta(t)\mapsto\alpha(t)$ extends to a distance nonincreasing map
$F\colon\conv(\beta)\to\M^2_k$. Since $F$ maps $\partial\conv(\beta)$ onto
$\partial\conv(\alpha)$ with degree one, $F(\conv(\beta))\supset\conv(\alpha)$.
Thus $|\conv(\beta)|\geq |\conv(\alpha)|$. Next we have
$$
\cos\left(\frac{\phi_\beta}{2}\right)
=
\lim_{\epsilon\to0}
\frac{|\beta(\epsilon)\beta(\ell-\epsilon)|}{2\epsilon}
\geq
\lim_{\epsilon\to0}
\frac{|\alpha(\epsilon)\alpha(\ell-\epsilon)|}{2\epsilon}
=
\cos\left(\frac{\phi_\alpha}{2}\right),
$$
where the equalities follow by taking normal coordinates at $\beta(0)$ and $\alpha(0)$, and the middle inequality holds since $\beta$ majorizes $\alpha$.
Hence
$\phi_\beta\leq\phi_\alpha$.
\end{proof}

Finally we are ready to establish the main result of this section:

\begin{proof}[Proof of Proposition \ref{prop:main-model}]
Set $\rho:=\K_\alpha-\K_\beta$. After reflecting the curves if necessary, we may assume that $d\K_\alpha$ and
$d\K_\beta$ are nonnegative. Since $\kappa_\beta\leq\kappa_\alpha$, we have
$d\K_\beta\leq d\K_\alpha$, and therefore $d\rho\geq0$.
If $d\rho=0$, then $\alpha$ and $\beta$ are congruent by Proposition \ref{prop:prescribed-curvature}.
So suppose that $d\rho\ne0$. First suppose that $\alpha(0)\ne\alpha(\ell)$. Then
Lemma~\ref{lem:nonclosed-model} gives
$$
|\beta(0)\beta(\ell)|\geq|\alpha(0)\alpha(\ell)|.
$$
If equality holds, then $\alpha$ and $\beta$ are congruent by the same lemma.
Thus the desired inequality holds, and equality is impossible in this case.
It remains only to consider the case where $\alpha$ is closed. Then the
inequality is automatic, and equality can occur only when $\beta$ is closed.
Assume therefore that $\beta$ is closed. Applying the nonclosed case to the
subarcs $\alpha|_{[s,t]}$ and $\beta|_{[s,t]}$, we obtain
$
|\beta(s)\beta(t)|\geq |\alpha(s)\alpha(t)|
$
for all $0\leq s<t<\ell$. 
By Lemma \ref{lem:majorize}, the areas $|\conv(\beta)|\geq|\conv(\alpha)|$, and the closing exterior angles $\phi_\beta\leq \phi_\alpha$.
Thus, by the Gauss-Bonnet theorem, for $k\leq 0$ we have
\begin{multline*}
2\pi-k|\conv(\beta)|=\int_0^\ell d\K_\beta+\phi_\beta
\leq \int_0^\ell d\K_\alpha+\phi_\alpha\\=2\pi-k|\conv(\alpha)|\leq 2\pi-k|\conv(\beta)|.
\end{multline*}
Hence $d\K_\beta=d\K_\alpha$. So $\alpha$ and $\beta$ are congruent. 
If $k>0$, after rescaling $\M^2_k$ to $\S^2$, we obtain the same conclusion
by Lemma \ref{lem:ni}.
\end{proof}

\section{Curvature of Majorizations}\label{sec:majorization}
In order to extend the comparison theorem in the last section, from model planes to $\CAT(k)$ spaces, we need the following result which constitutes a refinement of Reshetnyak's majorization theorem.  
By a \emph{degenerate convex curve} in $\M^2_k$ we mean a 
curve which double covers a nontrivial geodesic segment. A \emph{possibly degenerate} convex
curve is one which is either convex or degenerate.
Let $\alpha\colon [0,\ell]\to X_k $  be a rectifiable curve, and $\tilde\alpha \colon [0,\ell]\to \M^2_k$ be a possibly degenerate convex curve with  
$\length(\tilde\alpha|_{[0,t]})=\length(\alpha|_{[0,t]})$ for all $t\in[0,\ell]$.
Then $\tilde\alpha$ \emph{majorizes} $\alpha$ if  for all $s$, $t\in [0,\ell]$, 
$$
|\tilde\alpha(t)\tilde\alpha(s)|\geq|\alpha(t)\alpha(s)|  \qquad\text{and}\qquad |\tilde\alpha(0)\tilde\alpha(\ell)|=|\alpha(0)\alpha(\ell)|.
$$
Reshetnyak's theorem \cites{reshetnyak1968} states that  $\tilde\alpha$ exists when $\alpha$ is closed.  If $\alpha$ is not closed, existence of $\tilde\alpha$ follows from applying Reshetnyak's theorem to the closed curve obtained by joining the endpoints of $\alpha$ with a geodesic segment, and then taking the corresponding subarc of the resulting majorizing curve. Here we obtain additional information about the curvature of $\tilde\alpha$:

\begin{theorem}\label{thm:reshetnyak-curvature}
Let 
$\alpha\colon[0,\ell]\to  X_k$ be a unit speed curve.  If $k>0$, assume
that the closed curve formed by joining the endpoints of $\alpha$ with a geodesic segment has length less than $2\pi/\sqrt{k}$ (e.g., $\ell<\pi/\sqrt{k}$). Then
there exists a majorization
$\tilde\alpha\colon[0,\ell]\to\M^2_k$ with
$
 \kappa_{\tilde\alpha}
 \leq
 \kappa_\alpha.
 $
 Furthermore, if $|\alpha(t)\alpha(s)|=|\tilde\alpha(t)\tilde\alpha(s)|$ for all $s,t\in[0,\ell]$, then the correspondence $\tilde\alpha(t)\mapsto\alpha(t)$ extends to a distance preserving map $\conv(\tilde\alpha)\to X_k$.
\end{theorem}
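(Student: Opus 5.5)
The plan is to prove the theorem first for polygonal curves, by running Reshetnyak's gluing construction and tracking angles, and then to pass to general curves by inscribed polygonal approximation. The rigidity statement will be proved separately at the end.

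\emph{Step 1: the polygonal case.} Let $\alpha$ be a closed polygon, obtained by closing up $\alpha$ with the segment $\alpha(\ell)\alpha(0)$, with vertices $p_0,\dots,p_N$. Triangulate the disk it spans by the fan of geodesic triangles $p_0p_ip_{i+1}$. Replace each triangle by its comparison triangle in $\M^2_k$, glue the comparison triangles along common sides, and straighten successively, as in Reshetnyak's proof (see also \cite{akp2024}).

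The vertex angles must be tracked. At a vertex $p_i$, $0<i<N$, the sum of the comparison angles at $\tilde p_i$ is at least the Alexandrov angle $\angle_\alpha(p_i)$. This follows from two facts: comparison angles dominate Alexandrov angles in $\CAT(k)$, and Alexandrov angles satisfy the triangle inequality $\angle(p_{i-1}p_ip_{i+1})\le\angle(p_{i-1}p_ip_0)+\angle(p_0p_ip_{i+1})$. During straightening (Alexandrov's lemma), when the glued polygon is nonconvex at a vertex we open it up. This only increases the remaining vertex angles up to $\pi$ and lengthens distances, and it ends with a convex polygon $\tilde\alpha$.

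Consequently every interior angle of $\tilde\alpha$ at $\tilde p_i$ is at least $\angle_\alpha(p_i)$, capped at $\pi$. Hence the exterior angles satisfy $\tilde\angle_{\tilde\alpha}(t_i)\le\tilde\angle_\alpha(t_i)$, which gives $\kappa_{\tilde\alpha}\le\kappa_\alpha$ on every subinterval, since for polygons the total curvature is the sum of the exterior angles. The endpoint side $\tilde p_N\tilde p_0$ is a comparison side, so $|\tilde\alpha(0)\tilde\alpha(\ell)|=|\alpha(0)\alpha(\ell)|$. The hypothesis that the closed curve has length less than $2\pi/\sqrt k$ guarantees that all comparison triangles exist.

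\emph{Step 2: limits.} For general $\alpha$, take inscribed polygons $\alpha_m$ with vertices on partitions of mesh tending to $0$, reparametrized by $[0,\ell]$ via arclength ratios. Then $\kappa_{\alpha_m}(I)\le\kappa_\alpha(I)$ up to the endpoint effects of the partition, by the definition of $\kappa$ as a supremum over partitions. Step 1 gives convex majorizations $\tilde\alpha_m$ with $\kappa_{\tilde\alpha_m}\le\kappa_{\alpha_m}$.

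Normalize initial conditions and apply Helly's theorem to $\K_{\tilde\alpha_m}$, exactly as in Part I of Lemma~\ref{lem:nonclosed-model}. Lemma~\ref{lem:continuous} then gives uniform convergence to a curve $\tilde\alpha$. Distance inequalities pass to the limit, as does the endpoint equality, and the limit of convex curves is possibly degenerate convex. For the curvature bound $\kappa_{\tilde\alpha}(I)\le\kappa_\alpha(I)$, take $I$ with endpoints that are non-atoms and use lower semicontinuity of total curvature \cite[Thm.~5.1.1]{alexandrov-reshetnyak1989}. Since $|d\K_{\tilde\alpha}|(\mathrm{int}\,I)\le\liminf\kappa_{\tilde\alpha_m}(I)\le\kappa_\alpha(I)$, approximating general intervals from inside finishes this step.

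The main obstacle is the endpoint issue in comparing $\kappa_{\alpha_m}$ with $\kappa_\alpha$ on intervals, where partition points are not aligned with the endpoints of $I$. I would handle it by including the endpoints of countably many rational intervals in the partitions, using a diagonal argument.

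\emph{Step 3: rigidity.} If all distances $|\alpha(s)\alpha(t)|=|\tilde\alpha(s)\tilde\alpha(t)|$ agree, then by Lang--Schroeder/Kirszbraun \cite{lang-schroeder1997} the map $\tilde\alpha(t)\mapsto\alpha(t)$ extends to a $1$-Lipschitz map $F\colon\conv(\tilde\alpha)\to X_k$. To show $F$ is distance preserving, use the flat triangle lemma. For $s<u<t$ the triangle $\alpha(s)\alpha(u)\alpha(t)$ has the same side lengths as its comparison triangle, hence spans a convex region isometric to a model triangle. Fanning these triangles from a fixed point, together with the rigidity case of Reshetnyak's theorem (equality of the comparison angle sums forces the fans to glue isometrically), gives a distance preserving extension of $F$ on the union of the fans, which is $\conv(\tilde\alpha)$.
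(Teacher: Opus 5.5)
\paragraph{Step 1: the straightening does not always produce a convex polygon.} Your polygonal step has a genuine gap. Straightening by Alexandrov's lemma only acts at the vertices $\tilde p_i$, $1<i<N$, where two comparison triangles meet along $\tilde p_0\tilde p_i$. It never removes a reflex angle at the apex $\tilde p_0$; in fact each such move increases the apex angle. The apex can indeed be reflex. In $X_k=\R^2$ take $p_0=(0,2)$, $p_1=(2,1)$, $p_2=(0,3)$, $p_3=(-2,1)$. The closed polygon $p_0p_1p_2p_3$ is a dart whose only reflex vertex is $p_0$. The glued fan is the dart itself, and no straightening is triggered since the angle at $\tilde p_2$ is $\pi/2$. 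So your procedure does not end with a convex polygon.

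Repairing this needs a further argument. For instance, you could view the glued disk as a $\CAT(k)$ space in which the reflex apex is no longer a vertex, and induct on the number of vertices. Your angle bookkeeping would then have to be redone through that argument. Note also that Alexandrov's lemma alone gives only $|\bar p_0\bar p_i|\geq|\tilde p_0\tilde p_i|$, not the majorization property of the straightened polygon. If instead you use Reshetnyak's theorem as a black box, you have no access to its construction at all.

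The paper avoids all of this in Lemma~\ref{lem:polygonal-majorization}, which works for \emph{any} majorization $\tilde\alpha$ of a polygon. First, $|p_{i-1}p_i|=\length(\tilde\alpha|_{[t_{i-1},t_i]})\geq|\tilde p_{i-1}\tilde p_i|\geq|p_{i-1}p_i|$ forces the edges of $\tilde\alpha$ to be segments. Second, the triangle $\tilde p_{i-1}\tilde p_i\tilde p_{i+1}$ has the same two sides at $\tilde p_i$ as $p_{i-1}p_ip_{i+1}$, and an opposite side no shorter. So its angle at $\tilde p_i$ is at least the comparison angle, hence at least the Alexandrov angle. This is your own ingredient, applied to the triple $p_{i-1}p_ip_{i+1}$ instead of to the fan.

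\paragraph{Step 2 is essentially the paper's argument.} With the above replacement, your dense set of partition endpoints plays the role of the paper's set $A$. Using Helly's theorem and Lemma~\ref{lem:continuous} instead of Arzel\`a--Ascoli gives unit speed of the limit for free. However, you must then reconcile the unit speed parametrization of $\tilde\alpha_m$ with the parametrization placing vertices at the partition points. Your curvature comparison on $[a,b]$ with $a,b\in A$ requires the latter.

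\paragraph{Step 3: the flatness claim is a non sequitur.} Every geodesic triangle in $X_k$ has the same side lengths as its comparison triangle, and is majorized by it, yet need not be flat. Flatness requires equality in some $\CAT(k)$ inequality. For example, suppose $\tilde\alpha(u)$ and $\tilde\alpha(v)$ lie strictly on opposite sides of a chord $\tilde\alpha(s)\tilde\alpha(t)$, and the diagonal between them meets the chord at $\tilde x$. Let $F$ be your Kirszbraun extension; it maps the chord isometrically onto the geodesic $\alpha(s)\alpha(t)$. Then
$|\alpha(u)\alpha(v)|\leq|\alpha(u)F(\tilde x)|+|F(\tilde x)\alpha(v)|\leq|\tilde\alpha(u)\tilde x|+|\tilde x\tilde\alpha(v)|=|\tilde\alpha(u)\tilde\alpha(v)|=|\alpha(u)\alpha(v)|$.
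Hence $F(\tilde x)$ realizes the comparison distance to $\alpha(u)$, and the flat triangle lemma applies. The flat pieces must then still be shown to fit together. The paper simply invokes the rigidity case of Reshetnyak's theorem \cite[9.61 \& p.~278]{akp2024}, which you also cite at the end. You should either cite it directly or replace the faulty claim by an argument of this kind.
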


This result generalizes \cite[Prop. 4.3]{ghomi2026-total} where $\alpha$ was $\C^1$, $k=0$, and $ X_k$ was a Riemannian manifold. The majorizing curve  $\tilde\alpha$ has also been called an ``unfolding'' \cites{cks2002,ghomi-wenk2021} or ``chord-stretching'' \cites{sallee1973,brooks-strantzen1992}. 
We first consider the polygonal case of Theorem \ref{thm:reshetnyak-curvature}. 
A curve $\alpha\colon[0,\ell]\to X_k$ is \emph{polygonal} if there is
a partition $0=t_0<\cdots<t_N=\ell$ such that each subarc
$\alpha([t_{i-1},t_i])$ is a geodesic segment. The points $t_i$ are called \emph{vertices} of $\alpha$.

\begin{lemma}\label{lem:polygonal-majorization}
Let $\alpha\colon[0,\ell]\to  X_k$ be a polygonal curve, and
$\tilde\alpha\colon[0,\ell]\to\M^2_k$ be a majorization of
$\alpha$.  Then $\tilde\alpha$ is polygonal and
$
        \kappa_{\tilde\alpha}\leq \kappa_\alpha.
$
\end{lemma}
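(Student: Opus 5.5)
Let $0=t_0<\cdots<t_N=\ell$ be the vertices of $\alpha$. I would first show that $\tilde\alpha$ is polygonal with vertices among the $t_i$, and then compare exterior angles at each vertex.

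\emph{Step 1: $\tilde\alpha$ is polygonal.} Fix $i$. Because $\alpha|_{[t_{i-1},t_i]}$ is a geodesic segment, $|\alpha(t_{i-1})\alpha(t_i)|=t_i-t_{i-1}$. Majorization gives
\[
|\tilde\alpha(t_{i-1})\tilde\alpha(t_i)|\geq t_i-t_{i-1}.
\]
On the other hand, $\tilde\alpha$ is unit speed on this interval, so the reverse inequality holds. Hence $\tilde\alpha|_{[t_{i-1},t_i]}$ has length equal to the distance between its endpoints. By uniqueness of geodesics in $\M^2_k$ (we have $\ell<\pi/\sqrt k$ when $k>0$), this subarc is the geodesic segment. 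So $\tilde\alpha$ is polygonal with vertices among the $t_i$. Consequently $d\K_{\tilde\alpha}$ is atomic and supported on $\{t_1,\dots,t_{N-1}\}$, as is the curvature of $\alpha$. It therefore suffices to show, for each interior vertex $t_i$,
\[
\pi-\angle_{\tilde\alpha}(t_i)\leq \pi-\angle_\alpha(t_i),
\qquad\text{i.e.,}\qquad
\angle_{\tilde\alpha}(t_i)\geq\angle_\alpha(t_i).
\]
Summing these inequalities over the vertices inside any interval $I$ gives $\kappa_{\tilde\alpha}(I)\leq\kappa_\alpha(I)$, since the total curvature of a polygonal curve is the sum of its exterior angles. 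For the interior of $I$ this is immediate. If an endpoint of $I$ is a vertex, it does not contribute under the definition $\kappa(I)=|d\K|((s,t))$, so endpoints cause no trouble.

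\emph{Step 2: angle comparison at a vertex.} For small $\epsilon>0$, set $p=\alpha(t_i)$, $x_\epsilon=\alpha(t_i-\epsilon)$ and $y_\epsilon=\alpha(t_i+\epsilon)$, and let $\tilde p,\tilde x_\epsilon,\tilde y_\epsilon$ be the corresponding points of $\tilde\alpha$. The two sides from the vertex have equal lengths $\epsilon$ in both curves, since both are geodesic near $t_i$. Majorization gives $|\tilde x_\epsilon\tilde y_\epsilon|\geq|x_\epsilon y_\epsilon|$. In $\M^2_k$ the angle $\angle_{\tilde\alpha}(t_i)$ equals the model angle $\tilde\angle_k(\tilde x_\epsilon\tilde p\tilde y_\epsilon)$ exactly, because the two sides are geodesics. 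By monotonicity of the law of cosines in the opposite side, this gives
\[
\angle_{\tilde\alpha}(t_i)\geq\tilde\angle_k(x_\epsilon p y_\epsilon).
\]
In a $\CAT(k)$ space the Alexandrov angle between geodesics is the limit of comparison angles as $\epsilon\to0$, and the comparison angle is monotone in this limit. Hence
\[
\angle_{\tilde\alpha}(t_i)\geq\lim_{\epsilon\to0}\tilde\angle_k(x_\epsilon p y_\epsilon)=\angle_\alpha(t_i).
\]
This is the desired inequality.

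\emph{Main obstacle.} The delicate point is making sure the vertex angles of $\tilde\alpha$ used above are the correct quantities. A polygonal degenerate curve could have a cusp, with angle $0$; the inequality still holds there, since it only requires $\angle_{\tilde\alpha}\geq\angle_\alpha$. One also has to confirm that the convex or degenerate shape of $\tilde\alpha$ imposes no extra constraint, and that the segments of $\tilde\alpha$ really are unique geodesics. For that I would invoke the length bound hypothesis, noting that each subarc has length less than $\pi/\sqrt k$.
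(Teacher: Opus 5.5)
Your proof is correct and follows essentially the paper's route: edge length equal to chord length forces $\tilde\alpha$ to be geodesic on each $[t_{i-1},t_i]$. At each vertex, the hinge comparison (equal adjacent sides, no shorter opposite side) combined with the $\CAT(k)$ angle comparison gives $\angle_{\tilde\alpha}(t_i)\geq\angle_\alpha(t_i)$. The paper uses the full triangle $p_{i-1}p_ip_{i+1}$ with ``Alexandrov angle $\leq$ comparison angle'' directly, rather than shrinking to $\epsilon$-triangles. One small fix: the lemma does not assume $\alpha$ is unit speed. It is applied to inscribed polygons traversed with constant speed at most $1$ on each edge. So write $\length(\alpha|_{[t_{i-1},t_i]})$ in place of $t_i-t_{i-1}$, and ``equal lengths'' in place of ``lengths $\epsilon$''; the rest of your argument is unchanged.
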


\begin{proof}
Let $p_i:=\alpha(t_i)$ be the vertices of $\alpha$, and set
$\tilde p_i:=\tilde\alpha(t_i)$. Since $\alpha$ is polygonal, each subarc
$\alpha|_{[t_{i-1},t_i]}$ is a geodesic segment. Thus
$$
|p_{i-1}p_i|=\length(\alpha|_{[t_{i-1},t_i]})=\length(\tilde\alpha|_{[t_{i-1},t_i]})\geq |\tilde p_{i-1}\tilde p_i|\geq |p_{i-1}p_i|.
$$
So $\length(\tilde\alpha|_{[t_{i-1},t_i]})= |\tilde p_{i-1}\tilde p_i|$, which yields that $\tilde\alpha|_{[t_{i-1},t_i]}$ is a geodesic segment. It remains to compare the angles. The adjacent sides of an interior vertex $p_i$ have the same length as those of $\tilde p_i$ while
$
        |p_{i-1}p_{i+1}|
        \leq
        |\tilde p_{i-1}\tilde p_{i+1}|.
$
Let $\overline p_{i-1}\overline p_i\overline p_{i+1}$ be a
triangle in $\M_k^2$ with the same side lengths as $p_{i-1}p_ip_{i+1}$.
 The triangle
$\tilde p_{i-1}\tilde p_i\tilde p_{i+1}$ has the same two
side lengths adjacent to $\tilde p_i$, and no shorter opposite side.  Thus the angles
$$
\angle(\tilde p_{i-1}\tilde p_i\tilde p_{i+1})
\geq
\angle(\overline p_{i-1}\overline p_i\overline p_{i+1})
\geq
\angle(p_{i-1} p_i p_{i+1}),
$$
where the second inequality holds since $X_k$ is a $\CAT(k)$ space.
Hence the exterior angles $\pi-\angle(\tilde p_{i-1}\tilde p_i\tilde p_{i+1})\leq \pi-\angle(p_{i-1} p_i p_{i+1})$, which completes the proof.
\end{proof}

Now we establish the main result of this section:

\begin{proof}[Proof of Theorem \ref{thm:reshetnyak-curvature}]
Let $A\subset[0,\ell]$ be a countable dense set containing $0$ and
$\ell$.  Choose a nested sequence of partitions
$$P_j=\left\{0=t_0^j<t_1^j<\cdots<t_{N_j}^j=\ell\right\},
\qquad P_j\subset P_{j+1},
\qquad |P_j|\to0,$$
so that every point of $A$ eventually belongs to $P_j$ for all sufficiently large
$j$.  Let $\alpha_j$ be the inscribed geodesic polygon with vertices
$\alpha(t_i^j)$, parametrized on $[0,\ell]$ by constant speed on each
interval $[t_{i-1}^j,t_i^j]$.  

Since $\alpha$ is unit speed, each $\alpha_j$ is $1$-Lipschitz. So $\alpha_j\to\alpha$
uniformly.  If $a,b\in A$, $a<b$, then for all sufficiently large $j$
both $a$ and $b$ belong to $P_j$, and
$\alpha_j|_{[a,b]}$ is the inscribed polygon of $\alpha|_{[a,b]}$
determined by  $P_j\cap[a,b]$, whose mesh tends to
zero.  By the definition of total curvature,
$$
\kappa_{\alpha_j}([a,b])\leq \kappa_\alpha([a,b]).
$$
Let
$\tilde\alpha_j\colon[0,\ell]\to\M^2_k$ be a majorization of
$\alpha_j$, parametrized so that the vertices corresponding to
$\alpha(t_i^j)$ occur at the same parameters $t_i^j$, and so that each edge is traversed with constant speed.  We may assume that these curves all have the same initial conditions.
By Lemma~\ref{lem:polygonal-majorization}, corresponding sides of
$\alpha_j$ and $\tilde\alpha_j$ have equal lengths. Hence the curves
$\tilde\alpha_j$ are $1$-Lipschitz,
and their images lie in a fixed compact ball of $\M^2_k$.  Thus, after
passing to a subsequence, they converge uniformly to a curve
$
        \tilde\alpha\colon[0,\ell]\to\M^2_k .
$

Since $\conv(\tilde\alpha_j)$ converge to $\conv(\tilde\alpha)$ in the
Hausdorff distance, the boundary curves $\partial\conv(\tilde\alpha_j)$,
with their arclength parametrizations, converge uniformly to 
$\partial\conv(\tilde\alpha)$, which may degenerate to a
doubly covered geodesic segment. Hence $\tilde\alpha$ is  a possibly degenerate
convex curve.  The majorization inequalities pass to
the limit:
$$|\tilde\alpha(s)\tilde\alpha(t)|
\geq
|\alpha(s)\alpha(t)|$$        
 for all $s,t\in[0,\ell]$.
Equality holds for $s=0$, $t=\ell$, because each $\tilde\alpha_j$ has endpoint distance equal to $|\alpha(0)\alpha(\ell)|$. It remains to check that
$\tilde\alpha$ has the same arclength function as $\alpha$.  Since
$\alpha$ is parametrized by arclength and each edge of $\alpha_j$ is
traversed with speed at most $1$, the same is true for
$\tilde\alpha_j$, because corresponding sides have equal lengths.  Hence
$\tilde\alpha$ is $1$-Lipschitz, and so
$$\operatorname{length}\bigl(\tilde\alpha|_{[a,b]}\bigr)
\leq b-a .$$
On the other hand, for every partition
$a=t_0<\cdots<t_N=b$, the majorization inequality gives
$
        \sum
        |\tilde\alpha(t_i)\tilde\alpha(t_{i+1})|
        \geq
        \sum
        |\alpha(t_i)\alpha(t_{i+1})|.
$
Taking the supremum over all partitions of $[a,b]$, and using that
$\alpha$ is parametrized by arclength, gives
$$\operatorname{length}\bigl(\tilde\alpha|_{[a,b]}\bigr)
\geq b-a .$$
Thus equality holds, and $\tilde\alpha$ has the same arclength function
as $\alpha$.  Therefore $\tilde\alpha$ is a majorization of
$\alpha$. The characterization of the equality case holds by the rigidity case of Reshetnyak's theorem \cite[9.61 \& p. 278]{akp2024}.

Finally, to obtain the curvature inequality, first let $a<b$ belong to
$A$.  By Lemma~\ref{lem:polygonal-majorization},
$\kappa_{\tilde\alpha_j}([a,b])\leq \kappa_{\alpha_j}([a,b])$.  Thus, by
lower semicontinuity of total curvature under uniform convergence
\cite[Thm. 5.1.1]{alexandrov-reshetnyak1989,karuwannapatana-maneesawarng2007},
$$\kappa_{\tilde\alpha}([a,b])
\leq
\liminf_{j\to\infty}\kappa_{\tilde\alpha_j}([a,b])
\leq
\limsup_{j\to\infty}\kappa_{\alpha_j}([a,b])
\leq
\kappa_\alpha([a,b]).$$
Hence the desired inequality holds for all intervals whose endpoints belong
to $A$.  Now let $[a,b]\subset[0,\ell]$ be arbitrary.  Since $A$ is dense, we may choose $a_m,b_m\in A$ with
$a<a_{m+1}<a_m<b_m<b_{m+1}<b$, $a_m\to a$, and $b_m\to b$.  Then
$$
\kappa_{\tilde\alpha}([a_m,b_m])\leq \kappa_\alpha([a_m,b_m])
$$
for every $m$.  As $m\to\infty$, the intervals $[a_m,b_m]$ increase to $(a,b)$.  Since, by definition,
total curvature involves only the interior points of 
partitions, 
$\kappa_{\tilde\alpha}([a_m,b_m])\to\kappa_{\tilde\alpha}([a,b])$ and
$\kappa_\alpha([a_m,b_m])\to\kappa_\alpha([a,b])$, completing the proof.
\end{proof}

\begin{note}
One might expect that the curvature condition $\kappa_{\tilde\alpha}\leq \kappa_{\alpha}$ should hold for all majorizing curves $\tilde\alpha$. This is the case  for $\C^{1,1}$ curves $\alpha$ in Cartan-Hadamard manifolds \cite[Lem. 4.7]{ghomi2026-total}. By Lemma \ref{lem:polygonal-majorization}, it suffices to show that any majorization $\tilde\alpha$ is the limit of majorizations of polygonal approximations of $\alpha$.
\end{note}

\section{The Main Result}\label{sec:main}
In this section we use the refinement of Reshetnyak's majorization (Theorem \ref{thm:reshetnyak-curvature}) together with the comparison in model planes (Proposition \ref{prop:main-model}) to establish the following result which sharpens Theorem \ref{thm:main}:

\begin{theorem}\label{thm:main2}
Let $\alpha$, $\beta$ be as in Theorem \ref{thm:main}, and $\tilde \beta$ be a majorization of $\beta$ furnished by Theorem \ref{thm:reshetnyak-curvature}, so that $\kappa_{\tilde\beta}\leq \kappa_{\beta}$. For $0\leq r\leq1$, let $\alpha_r\colon[0,\ell]\to\M^2_k$ be unit speed curves with curvature measure
$d\K_\alpha-r\,d(\K_\alpha-\K_{\tilde\beta}).$
 Then
\begin{equation*}
|\beta(0)\beta(\ell)|
-
|\alpha(0)\alpha(\ell)|
=
\int_0^1
\MA\bigl(\alpha_r,\K_\alpha-\K_{\tilde\beta}\bigr)dr .
\end{equation*}
In particular,
$
|\beta(0)\beta(\ell)|\geq |\alpha(0)\alpha(\ell)|.
$
Equality holds only if the correspondence
$\alpha(t)\mapsto\beta(t)$ extends to an isometry between $\conv(\alpha)$ and $\conv(\beta)$.
\end{theorem}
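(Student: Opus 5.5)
The plan is to reduce everything to the model plane through $\tilde\beta$. By Theorem~\ref{thm:reshetnyak-curvature}, $\tilde\beta\colon[0,\ell]\to\M^2_k$ is a possibly degenerate convex unit speed curve with $|\tilde\beta(0)\tilde\beta(\ell)|=|\beta(0)\beta(\ell)|$ and $\kappa_{\tilde\beta}\leq\kappa_\beta\leq\kappa_\alpha$. The hypothesis $\ell<\pi/\sqrt{k}$ guarantees that the closed curve obtained by joining the endpoints of $\beta$ has length $<2\pi/\sqrt{k}$, so the theorem applies. Moreover $\tilde\beta$ has finite total curvature, since $\kappa_{\tilde\beta}([0,\ell])\leq\kappa_\alpha([0,\ell])<\infty$. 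We then apply Theorem~\ref{thm:moment-id} to the pair $\alpha,\tilde\beta$ in $\M^2_k$. This gives
\[
|\tilde\beta(0)\tilde\beta(\ell)|-|\alpha(0)\alpha(\ell)|=\int_0^1\MA\bigl(\alpha_r,\K_\alpha-\K_{\tilde\beta}\bigr)\,dr ,
\]
and replacing $|\tilde\beta(0)\tilde\beta(\ell)|$ by $|\beta(0)\beta(\ell)|$ yields the stated identity.

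For the inequality, note that we cannot read off the sign of the integral directly, because the $\alpha_r$ need not be convex. Instead we invoke Proposition~\ref{prop:main-model} with $\alpha$ and $\tilde\beta$. If $\tilde\beta$ is genuinely convex, this gives $|\tilde\beta(0)\tilde\beta(\ell)|\geq|\alpha(0)\alpha(\ell)|$ directly.

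If $\tilde\beta$ is degenerate, it double covers a geodesic segment $\sigma$. Its curvature measure then consists of two nonnegative parts: atoms at the two turnaround points, and at most one cusp of mass $\pi$, which occurs when a turnaround lies in $(0,\ell)$. A cusp of $\tilde\beta$ at $t$ forces $\kappa_\alpha$ to have an atom of size $\geq\pi$ at $t$, which is impossible for convex $\alpha$. So the turnarounds occur only at $0$ or $\ell$, and $\tilde\beta$ is then a single geodesic traversal, or a traversal with one fold at an endpoint. The first subcase is convex. In the second, $\tilde\beta$ is closed, which forces $|\beta(0)\beta(\ell)|=0$. That contradicts $\tilde\beta$ being unit speed on a nontrivial segment unless the fold occurs at an interior parameter, which we have just excluded. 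Alternatively, we can perturb into convex curves and pass to the limit using Lemma~\ref{lem:continuous}. Either way the inequality holds in all cases.

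For rigidity, suppose equality holds. Proposition~\ref{prop:main-model} then says $\alpha(t)\mapsto\tilde\beta(t)$ extends to an isometry $\conv(\alpha)\to\conv(\tilde\beta)$. It remains to show that $\tilde\beta(t)\mapsto\beta(t)$ is an isometric correspondence, so that the rigidity clause of Theorem~\ref{thm:reshetnyak-curvature} applies. This is the main obstacle, because majorization only gives $|\tilde\beta(s)\tilde\beta(t)|\geq|\beta(s)\beta(t)|$. The idea is to run the endpoint inequality on subarcs. For $0\leq s<t\leq\ell$, the subarc $\alpha|_{[s,t]}$ is convex, and $\beta|_{[s,t]}$ satisfies $\kappa_{\beta|_{[s,t]}}\leq\kappa_{\alpha|_{[s,t]}}$. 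The inequality already proved then gives $|\beta(s)\beta(t)|\geq|\alpha(s)\alpha(t)|=|\tilde\beta(s)\tilde\beta(t)|$, where the equality uses the isometry between $\alpha$ and $\tilde\beta$. Combined with majorization, this shows that all chords of $\beta$ and $\tilde\beta$ have equal length.

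By Theorem~\ref{thm:reshetnyak-curvature}, $\tilde\beta(t)\mapsto\beta(t)$ therefore extends to a distance preserving map $\conv(\tilde\beta)\to X_k$. Its image is convex and contains $\beta$, and it is contained in $\conv(\beta)$. Composing with the isometry from $\conv(\alpha)$, we conclude that $\alpha(t)\mapsto\beta(t)$ extends to an isometry of $\conv(\alpha)$ onto $\conv(\beta)$. The one detail to check is that the isometric image of a convex set in a $\CAT(k)$ space under these length bounds is convex, so that it equals $\conv(\beta)$; this holds because geodesics are unique in that range.
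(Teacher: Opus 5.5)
Your derivation of the identity and of the endpoint inequality matches the paper. In the degenerate case, the only point needed is the paper's: a unit speed arc that double covers a segment must reverse direction at some $t\in(0,\ell)$. This gives $d\K_{\tilde\beta}(\{t\})=\pi$, hence $|d\K_\alpha(\{t\})|\geq\pi$, which is impossible for convex $\alpha$. Your subcases about a ``fold at an endpoint'' and about $\tilde\beta$ being closed are not meaningful and can be dropped. Your rigidity argument is also fine when $\alpha(0)=\alpha(\ell)$.

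The gap is the rigidity step when $\alpha(0)\neq\alpha(\ell)$. There you apply the rigidity clause of Theorem~\ref{thm:reshetnyak-curvature} directly to the open arc $\beta$. That clause is worded for arbitrary $\alpha$, but its only justification is the rigidity case of Reshetnyak's theorem, which concerns closed curves. The paper itself applies it only to closed curves: to $\beta$ when $\beta$ is closed, and otherwise to $\beta\cup\sigma$, where $\sigma$ is the geodesic from $\beta(0)$ to $\beta(\ell)$.

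For an open arc, $\tilde\beta$ is the arc of a closed convex curve $\tilde\beta\cup\tilde\sigma$ that majorizes $\beta\cup\sigma$. To use Reshetnyak's rigidity, you need $f\colon\tilde\beta\cup\tilde\sigma\to\beta\cup\sigma$ to preserve \emph{all} distances. Your chord equalities handle pairs on $\tilde\beta$, and pairs on $\tilde\sigma$ are trivial. For $p=\tilde\beta(s)$ and $q\in\tilde\sigma$, however, $\CAT(k)$ comparison in the triangle $\beta(0)\beta(s)\beta(\ell)$ gives only $|f(p)f(q)|\leq|pq|$. Upgrading this to equality is the real obstacle, not the chord equality you identified, and it is why the paper needs Lemma~\ref{lem:curve-geodesic-angle}.

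The paper closes the gap in three steps.
\begin{enumerate}
\item The isometry $G$ gives $\kappa_{\tilde\beta}=\kappa_\alpha$. Combined with $\kappa_{\tilde\beta}\leq\kappa_\beta\leq\kappa_\alpha$, this yields $\kappa_{\tilde\beta}=\kappa_\beta$.
\item The triangles $\beta(t)\beta(0)\beta(\ell)$ and $\tilde\beta(t)\tilde\beta(0)\tilde\beta(\ell)$ have equal sides. Lemma~\ref{lem:curve-geodesic-angle} then gives $\angle(\beta,\sigma)=\angle(\tilde\beta,\tilde\sigma)$ at the endpoints.
\item For $p\in\tilde\beta$ and $q\in\tilde\sigma$, take the shorter arc $\tilde\gamma$ of $\partial\conv(\tilde\beta)$ from $p$ to $q$, and its counterpart $\gamma\subset\beta\cup\sigma$. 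By the first two steps, $\kappa_\gamma=\kappa_{\tilde\gamma}$. The endpoint inequality itself, applied to $\tilde\gamma$ and $\gamma$, then gives $|pq|\leq|f(p)f(q)|$.
\end{enumerate}
Only after this is Reshetnyak's rigidity applied to the closed curve $\beta\cup\sigma$.
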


To establish the rigidity part of the above result, we need the following lemma
about angles in $\CAT(k)$ spaces, which is immediate in Riemannian manifolds. 
We review some basic facts, which may be found in \cite{bridson-haefliger1999,akp2024}.
For points
$p,o,q\in X_k$, with $p,q\neq o$, let $\Delta_k(poq)\subset \M^2_k$ be a geodesic triangle with side lengths $|po|$, $|qo|$, and $|pq|$. Let $\angle(poq)$ be the angle of $\Delta_k(poq)$ at $o$.  If $\gamma$, $\sigma\colon[0,\ell]\to X_k$ are unit speed geodesics with $\gamma(0)=o=\sigma(0)$, then the \emph{(Alexandrov) angle} between them at $o$ is defined as
$$
\angle(\gamma,\sigma):= \lim_{s,t\to0^+}\angle\big(\gamma(s)\,o\,\sigma(t)\big),
$$ 
\cite[6C, 9.14(c)]{akp2024}. This
defines a pseudometric on
geodesics starting at $o$, which determines the space of \emph{directions} $S_o(X_k)$, after passing to the quotient and metric completion \cite[6D]{akp2024} \cite[p. 190]{bridson-haefliger1999}.
If $\gamma$, $\sigma$ are unit speed curves of finite total curvature, then their right derivatives $\gamma'_+(0)$, $\sigma'_+(0)$ exist
\cite[Cor.~2.7]{maneesawarng-lenbury2003}, in the sense of
\cite[6.9, 6.12]{akp2024}, and define directions in $S_o(X_k)$. Then 
$\angle(\gamma,\sigma):=\angle(\gamma'_+(0), \sigma'_+(0))$, i.e., the angle between $\gamma$ and $\sigma$ is determined by the angles between approximating geodesics starting at $o$.

\begin{lemma}\label{lem:curve-geodesic-angle}
Let $\gamma\colon[0,a]\to X_k$,  $\sigma\colon[0,b]\to X_k$ be
unit speed curves with $\gamma(0)=o=\sigma(0)$. Suppose that $\gamma$
has finite total curvature, and $\sigma$ is a geodesic. If $k>0$, assume
that $b<\pi/\sqrt{k}$. Then
$$
\angle(\gamma,\sigma)=\lim_{t\to0^+}\angle(\gamma(t)\,o\,\sigma(b)).
$$
\end{lemma}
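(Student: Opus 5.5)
The plan is to compare the comparison angle $\angle(\gamma(t)\,o\,\sigma(b))$ with the comparison angles $\angle(\gamma(t)\,o\,\sigma(s))$ for small $s$, and to use two facts. First, $\CAT(k)$ angle monotonicity along geodesics. Second, the derivative $\gamma'_+(0)$ is approximated by the geodesics $o\gamma(t)$. Write $\gamma_t$ for the unit speed geodesic from $o$ to $\gamma(t)$; it is unique since $|o\gamma(t)|\leq t$ is small.

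The first step uses the defining property of $\gamma'_+(0)$ from \cite[Cor.~2.7]{maneesawarng-lenbury2003}. The directions of $\gamma_t$ converge to $\gamma'_+(0)$ in $S_o(X_k)$ as $t\to0^+$. Hence $\angle(\gamma_t,\sigma)\to\angle(\gamma,\sigma)$, because the angle is a pseudometric and the triangle inequality gives $|\angle(\gamma_t,\sigma)-\angle(\gamma'_+(0),\sigma)|\leq\angle(\gamma_t,\gamma'_+(0))$.

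Next comes the upper bound. In a $\CAT(k)$ space, for geodesics $\gamma_t$ and $\sigma$ from $o$, the comparison angle $\angle(\gamma_t(u)\,o\,\sigma(v))$ is nondecreasing in $u$ and in $v$, and it tends to $\angle(\gamma_t,\sigma)$ as $u,v\to0$ \cite[9.14]{akp2024}. Taking $u=|o\gamma(t)|$ and $v=b$, which is allowed since $b<\pi/\sqrt{k}$ and the triangle perimeter is then less than $2\pi/\sqrt{k}$ for small $t$, we get
$$\angle(\gamma(t)\,o\,\sigma(b))\geq\angle(\gamma_t,\sigma).$$
Therefore $\liminf_{t\to0^+}\angle(\gamma(t)\,o\,\sigma(b))\geq\angle(\gamma,\sigma)$.

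The reverse inequality is the main obstacle, since monotonicity points the wrong way. I would control it by the first variation formula in $\CAT(k)$ spaces \cite{akp2024,bridson-haefliger1999}. For the geodesic $\sigma$ to the fixed point $q=\sigma(b)$, the distance function $f=|q\,\cdot|$ satisfies
$$|q\gamma(t)|\leq |qo|-t\cos\angle(\gamma,\sigma)+o(t).$$
This holds because $\gamma(t)$ is within $o(t)$ of the point at distance $t$ along a geodesic in a direction close to $\gamma'_+(0)$, and the first variation inequality for $\CAT(k)$ spaces gives $\limsup$ of the difference quotient of $f$ along that geodesic at most $-\cos$ of the angle; the $1$-Lipschitz property of $f$ absorbs the $o(t)$ error. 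Also $|o\gamma(t)|=t+o(t)$, since finite total curvature gives unit speed with a one-sided derivative.

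Now apply the law of cosines in $\M^2_k$ to the comparison triangle with sides $|o\gamma(t)|=t+o(t)$, $|oq|=b$ and $|q\gamma(t)|$. The derivative of the third side with respect to the first, at fixed angle $\psi$, is $-\cos\psi$. Together with the displayed bound, this yields $\cos\angle(\gamma(t)\,o\,q)\geq\cos\angle(\gamma,\sigma)-o(1)$. Hence $\limsup_{t\to0^+}\angle(\gamma(t)\,o\,\sigma(b))\leq\angle(\gamma,\sigma)$, and combined with the lower bound this gives the limit.

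The delicate points are justifying the first variation bound along a curve rather than a geodesic, and the uniformity of the $o(t)$ terms.
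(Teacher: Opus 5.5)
Your proof is correct, but it is organized differently from the paper's. The paper does not split into two inequalities. It takes geodesics $\gamma_i$ from $o$ with $(\gamma_i)'_+(0)\to\gamma'_+(0)$ and $\limsup_{t\to0^+}|\gamma(t)\gamma_i(t)|/t\to0$. It then observes that the comparison triangles $\Delta_k(\gamma(t)\,o\,\sigma(b))$ and $\Delta_k(\gamma_i(t)\,o\,\sigma(b))$ have sides differing by $o(t)$, so by the law of cosines their angles at $o$ differ by $o(1)$. It finishes with the strong angle lemma \cite[9.35]{akp2024} for the geodesic pair $\gamma_i,\sigma$ and the triangle inequality for angles.

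You instead sandwich the limit. Your lower bound uses only the $\CAT(k)$ inequality ``Alexandrov angle $\leq$ comparison angle'' applied to the chord geodesics $\gamma_t$, together with convergence of the chord directions. Your upper bound uses the first variation inequality, which is exactly the nontrivial half of the strong angle lemma. You transfer it to $\gamma$ through approximating geodesics and the $1$-Lipschitz property of $|q\,\cdot|$, and then convert it to angles by the expansion $|q\gamma(t)|=b-|o\gamma(t)|\cos\psi+O(t^2)$, uniform in $\psi$. So your upper bound is essentially the paper's mechanism run in one direction. Your decomposition shows that only that half needs more than angle monotonicity, whereas the paper's version is shorter because a single law-of-cosines perturbation handles both directions at once.

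One small point to tidy: you use two descriptions of $\gamma'_+(0)$. In Step 1 it is the limit of the directions of $\gamma_t$, and in Step 3 it is given by geodesics $\eta$ with $|\gamma(t)\eta(t)|\leq\epsilon t$. These agree by the same monotonicity you use in Step 2, since $\angle(\gamma_t,\eta)\leq\angle(\gamma(t)\,o\,\eta(t))=O(\epsilon)$ for small $t$. So nothing is actually missing.
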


\begin{proof}
By the definition of
right derivative \cite[6.9]{akp2024}, there are unit speed geodesics $\gamma_i$
starting at $o$, with $(\gamma_i)'_+(0)\to \gamma'_+(0)$, such that
$$
\lim_{i\to\infty}\limsup_{t\to0^+}
\frac{|\gamma(t)\gamma_i(t)|}{t}=0.
$$
Since $\gamma_i$ is unit speed, $|o\gamma_i(t)|=t$. Hence
$
\left||o\gamma(t)|/t-1\right|
\leq
|\gamma(t)\gamma_i(t)|/t,
$
by the reverse triangle inequality,
and consequently
$
|o\gamma(t)|/t\to1.
$
Thus the side lengths of the comparison triangles
$\Delta_k(\gamma(t)\,o\,\sigma(b))$ and
$\Delta_k(\gamma_i(t)\,o\,\sigma(b))$ satisfy
$$
\big||o\gamma(t)|-|o\gamma_i(t)|\big|
\leq |\gamma(t)\gamma_i(t)|,\qquad
\big||\gamma(t)\sigma(b)|-|\gamma_i(t)\sigma(b)|\big|
\leq |\gamma(t)\gamma_i(t)|,
$$
while the side
$|o\sigma(b)|=b$ is fixed. Now  the law of cosines in $\M^2_k$ yields
$$
\lim_{i\to\infty}\limsup_{t\to0^+}
\big|
\angle(\gamma(t)\,o\,\sigma(b))
-\angle(\gamma_i(t)\,o\,\sigma(b))
\big|=0.
$$
By the ``strong angle lemma'' \cite[9.35]{akp2024},
$
\angle(\gamma_i,\sigma)=\lim_{t\to0^+}
\angle(\gamma_i(t)\,o\,\sigma(b)).
$
Furthermore, since $(\gamma_i)'_+(0)\to\gamma'_+(0)$, we have
$\angle(\gamma_i,\sigma)\to\angle(\gamma,\sigma)$. By the triangle inequality,
\begin{gather*}
\big|\angle(\gamma(t)\,o\,\sigma(b))-\angle(\gamma,\sigma)\big|
\leq\\
\big|\angle(\gamma(t)\,o\,\sigma(b))
-\angle(\gamma_i(t)\,o\,\sigma(b))\big| 
+
\big|\angle(\gamma_i(t)\,o\,\sigma(b))
-\angle(\gamma_i,\sigma)\big|
+
\big|\angle(\gamma_i,\sigma)-\angle(\gamma,\sigma)\big|.
\end{gather*}
Taking $\limsup$ as $t\to0^+$, and then
letting $i\to\infty$, proves the claim.
\end{proof}

Now we are ready to establish the main result of this work:

\begin{proof}[Proof of Theorem \ref{thm:main2}]
Since  $|\beta(0)\beta(\ell)|=|\tilde\beta(0)\tilde\beta(\ell)|$, the desired identity follows immediately from Theorem \ref{thm:moment-id}. 
Note that $\tilde\beta$ is not degenerate, and therefore is a convex
curve. Indeed, if $\tilde\beta$ double covered a geodesic segment then $d\K_{\tilde\beta}(\{t\})=\pi$ at some point $t\in (0,\ell)$. But $\kappa_{\tilde\beta}\leq\kappa_{\beta}\leq \kappa_\alpha$, which implies that $|d\K_{\tilde\beta}|\leq |d\K_\alpha|$, as noted in Section \ref{sec:curvature-measures}. Thus $|d\K_{\alpha}(\{t\})|=\pi$, which is impossible since convex curves cannot have cusps.
Now by Proposition~\ref{prop:main-model}, $|\tilde\beta(0)\tilde\beta(\ell)|\geq |\alpha(0)\alpha(\ell)|$. Thus we obtain the desired endpoint inequality
\begin{equation}\label{eq:alpha-beta}
|\beta(0)\beta(\ell)|\geq |\alpha(0)\alpha(\ell)|.
\end{equation}
It remains to characterize the equality case. Suppose that $|\beta(0)\beta(\ell)|=|\alpha(0)\alpha(\ell)|$. Then
$
|\tilde\beta(0)\tilde\beta(\ell)|
=
|\alpha(0)\alpha(\ell)|.
$
We also know that $\kappa_{\tilde\beta}\leq\kappa_\beta\leq\kappa_\alpha$. So Proposition~\ref{prop:main-model}, applied to $\alpha$ and
$\tilde\beta$, shows that the correspondence
$\alpha(t)\mapsto\tilde\beta(t)$ extends to an isometry
$G\colon \conv(\alpha)\to \conv(\tilde\beta)$. Thus
$\kappa_{\tilde\beta}=\kappa_\alpha$, and consequently
\begin{equation}\label{eq:kappas}
\kappa_{\tilde\beta}=\kappa_\beta.
\end{equation}
For
$0\leq s<t\leq\ell$, applying \eqref{eq:alpha-beta} to
the restrictions $\alpha|_{[s,t]}$ and $\beta|_{[s,t]}$, gives
$
|\beta(s)\beta(t)|\geq|\alpha(s)\alpha(t)|.
$
Since $G$ is an isometry,
$
|\alpha(s)\alpha(t)|=|\tilde\beta(s)\tilde\beta(t)|.
$
So 
$
|\beta(s)\beta(t)|\geq|\tilde\beta(s)\tilde\beta(t)|.
$
The reverse inequality holds by majorization. Hence
\begin{equation}\label{eq:betas}
|\beta(s)\beta(t)|
=
|\tilde\beta(s)\tilde\beta(t)|.
\end{equation}

Using the  observations above, we construct a distance preserving map
$F\colon\conv(\tilde\beta)\to X_k$ which sends $\tilde\beta(t)$ to
$\beta(t)$. Then $F\circ G$ is distance preserving and sends
$\alpha(t)$ to $\beta(t)$. Furthermore, $F\circ G(\conv(\alpha))=\conv(\beta)$.
Indeed, since $F\circ G$ is distance preserving, it sends geodesic segments
to geodesic segments. Thus $F\circ G(\conv(\alpha))$ is convex. It contains
$\beta$, and therefore $\conv(\beta)\subset F\circ G(\conv(\alpha))$.
Conversely, if $C\subset X_k$ is any convex set containing
$\beta$, then $(F\circ G)^{-1}(C)$ is a convex subset of
$\conv(\alpha)$ containing $\alpha$. Hence
$(F\circ G)^{-1}(C)=\conv(\alpha)$. Intersecting over all such $C$ gives
$F\circ G(\conv(\alpha))\subset\conv(\beta)$. So
$F\circ G(\conv(\alpha))=\conv(\beta)$, as desired. Thus it remains to construct $F$.

If $\alpha(0)=\alpha(\ell)$, then $\tilde\beta$ and $\beta$ are closed as
well. So the rigidity case of Theorem~\ref{thm:reshetnyak-curvature} yields
the desired map $F$. Next we consider the case
$\alpha(0)\neq\alpha(\ell)$.
Let $\sigma$ be the geodesic segment joining $\beta(0)$ to
$\beta(\ell)$. By construction, $\tilde\beta$  is the arc corresponding to
$\beta$ in a closed curve majorizing $\beta\cup\sigma$. Let $\tilde\sigma$
be the complementary arc. Then $\length(\tilde\sigma)=\length(\sigma)$. Furthermore, the endpoints of $\tilde\sigma$ are $\tilde\beta(0)$ and
$\tilde\beta(\ell)$, and $|\tilde\beta(0)\tilde\beta(\ell)|=|\beta(0)\beta(\ell)|=\length(\sigma)$. So
$\tilde\sigma$ is the geodesic segment joining $\tilde\beta(0)$ to
$\tilde\beta(\ell)$. Thus the correspondence
$$
f\colon\tilde\beta\cup\tilde\sigma\to \beta\cup\sigma,
$$
which sends $\tilde\beta$ to $\beta$ and $\tilde\sigma$ to $\sigma$, preserving arclength, is a majorization map.
We claim that $f$ preserves ambient distances. Then, by the rigidity
case of Theorem~\ref{thm:reshetnyak-curvature}, $f$ extends to a distance preserving map $F\colon\conv(\tilde\beta\cup\tilde\sigma)=\conv(\tilde\beta)\to X_k$, as desired.

So it remains to show that $|f(p)f(q)|=|pq|$ for all points $p$, $q\in \partial\conv(\tilde\beta)=\tilde\beta\cup\tilde\sigma$. 
This holds if $p$, $q\in\tilde\beta$ by \eqref{eq:betas}, and it is immediate if $p$, $q\in\tilde\sigma$. Finally, consider the case where
$p\in\tilde\beta$ and $q\in\tilde\sigma$. 
Choose an arc $\tilde\gamma$ in $\partial\conv(\tilde\beta)$ connecting $p$ and $q$ which realizes the intrinsic distance 
between these points in $\partial\conv(\tilde\beta)$. Then $\length(\tilde\gamma)<\pi/\sqrt{k}$ if $k>0$. For definiteness, assume that $\tilde\gamma$  passes
through $\tilde\beta(0)$ (the other case is identical). Then $\tilde\gamma$ consists of an arc
$\tilde\beta|_{[0,s]}$, with the reverse orientation, followed by the
subsegment of $\tilde\sigma$ from $\tilde\beta(0)$ to $q$. Let $\gamma$ be the
corresponding arc in $X_k$, formed by $\beta|_{[0,s]}$, with the reverse
orientation, followed by the corresponding subsegment of $\sigma$.

The curvature of $\tilde\gamma$  comes from
$\tilde\beta|_{[0,s]}$, and the exterior angle of $\tilde\gamma$ at $\tilde\beta(0)$, which is  $\pi-\angle(\tilde\beta,\tilde\sigma)$. Similarly, the curvature of
$\gamma$ consists of the curvature of $\beta|_{[0,s]}$ and the exterior angle $\pi-\angle(\beta,\sigma)$.  By \eqref{eq:kappas},
$\beta$ and $\tilde\beta$ have the same curvature. Furthermore, the triangles $\tilde\beta(t)\tilde\beta(0)\tilde\beta(\ell)$ and $\beta(t)\beta(0)\beta(\ell)$ have equal side lengths by \eqref{eq:betas}. Thus  Lemma~\ref{lem:curve-geodesic-angle} gives
$$
\angle(\beta,\sigma)
=
\lim_{t\to0^+}\angle\big(\beta(t)\beta(0)\beta(\ell)\big)
=
\lim_{t\to0^+}\angle\big(\tilde\beta(t)\tilde\beta(0)\tilde\beta(\ell)\big)
=
\angle(\tilde\beta,\tilde\sigma).
$$
Hence $\kappa_{\tilde\gamma}=\kappa_\gamma$. 
It follows that
$
|pq|\leq |f(p)f(q)|,
$
by the endpoint inequality \eqref{eq:alpha-beta} applied to $\tilde\gamma$ and
$\gamma$. The reverse inequality holds since $f$ is distance nonincreasing. Hence $f$ is distance preserving as claimed, which completes the proof.
\end{proof}

\appendix
\section{Polygonal Approximations}\label{sec:polygonal}

The chief difficulty in proving Theorem \ref{thm:main} was the characterization of the equality case in the endpoint inequality \eqref{eq:schur}, which necessitated the development of the moment arm identity \eqref{eq:moment-id}, and the variational technique used to  apply this identity in model planes. If, however, one is interested only in proving \eqref{eq:schur}, without rigidity, then it can be obtained more quickly using polygonal approximations which we develop here.
A convex curve $\alpha\colon[0,\ell]\to\M^2_k$ is called \emph{transversal} 
if it is not closed and it is not tangent at either endpoint to the geodesic $\L$ passing through its endpoints. 

\begin{proposition}\label{prop:polygonal-approx}
Let $\alpha,\beta\colon[0,\ell]\to\M^2_k$ be unit speed convex curves, with $\alpha$ transversal. Then there
are partitions $P_m=\{0=t^m_0<\cdots<t^m_{N_m}=\ell\}$, with
$|P_m|\to0$, and unit speed polygonal curves $\alpha_m,\beta_m\colon[0,\ell]\to\M^2_k$,
with vertices at $t^m_j$, such that $\alpha_m$ are convex,
$\alpha_m\to\alpha$ and $\beta_m\to\beta$ uniformly,  $\K_{\alpha_m}\to \K_\alpha$ and $\K_{\beta_m}\to\K_\beta$  in
$L^1([0,\ell])$, and
$
\K_{\alpha_m}(\ell)\to\K_\alpha(\ell)
$,
$
\K_{\beta_m}(\ell)\to\K_\beta(\ell).
$
If in addition $\kappa_\beta\leq\kappa_\alpha$, then we may also require that the exterior angles
$
\tilde\angle_{\beta_m}(t^m_j)\leq\tilde\angle_{\alpha_m}(t^m_j),
$
for $1\leq j\leq N_m-1$.
\end{proposition}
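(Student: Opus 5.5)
We will build the approximations by discretizing the curvature measures, and then reconstruct the curves with Proposition~\ref{prop:prescribed-curvature}. After reflecting, assume $d\K_\alpha,d\K_\beta\geq0$ and that $\beta$ has the same initial conditions as $\alpha$. Take partitions $P_m$ with $|P_m|\to0$. By adding finitely many points at each stage, we may arrange that every atom of $d\K_\alpha$ or $d\K_\beta$ of mass at least $1/m$ is a partition point. We place atoms only at the interior vertices $t^m_j$, $1\leq j\leq N_m-1$. For a measure $\mu\in\{d\K_\alpha,d\K_\beta\}$, give $t^m_j$ the mass
$$\mu_m(\{t^m_j\}):=\mu\big([c_j,c_{j+1})\big),$$
where the $c_j$ are midpoints between consecutive vertices. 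At the two ends, fold the masses of $[0,c_1)$ and $[c_{N_m},\ell]$ into the first and last interior vertices. Then $\mu_m([0,\ell])=\mu([0,\ell])$. Since each cell has length at most $|P_m|$, the estimate used in the proof of Theorem~\ref{thm:moment-id} shows that the cumulative functions converge in $L^1$. The terminal values agree exactly, because $\mu$ has no atoms at $0$ or $\ell$.

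The atoms of $\mu_m$ lie in $[0,\pi)$. For large $m$ a cell mass is below $\pi$. Indeed, if it is not a large atom, it is at most $1/m$ plus the diffuse mass of a small interval. If it contains a large atom $a$, it is $a+o(1)$, and $a<\pi$ because convex curves have no cusps; we still need the sum to stay below $\pi$, which we get by refining the cells around $a$. Let $\alpha_m,\beta_m$ be the curves given by Proposition~\ref{prop:prescribed-curvature}. They are polygonal with vertices at $t^m_j$. Lemma~\ref{lem:continuous} gives uniform convergence, the $L^1$ convergence of $\K$, and convergence of the endpoint values, as required.

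For the comparison of exterior angles, note that $\kappa_\beta\leq\kappa_\alpha$ means $d\K_\beta\leq d\K_\alpha$. The masses are defined by the same cells for both measures, so
$$\tilde\angle_{\beta_m}(t^m_j)=d\K_\beta(\text{cell})\leq d\K_\alpha(\text{cell})=\tilde\angle_{\alpha_m}(t^m_j).$$

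The main obstacle is convexity of $\alpha_m$. By Lemma~\ref{lem:local-convexity}, $\alpha_m$ is locally convex and turns left. Simplicity for large $m$ follows as in Part II of Lemma~\ref{lem:nonclosed-model}. Uniform convergence to the simple curve $\alpha$ rules out large loops. Small loops would need curvature close to $\pi$ on a short interval, which the bound $d\K_{\alpha_m}\leq$ (local mass of $d\K_\alpha$ near the loop) excludes; this uses the Gauss--Bonnet argument again. It then remains to check the hypothesis of Lemma~\ref{lem:convex}: the endpoints of $\alpha_m$ must lie on $\partial\conv(\alpha_m)$. Here we use transversality. Since $\alpha$ is not closed and makes nonzero angles with $\L$ at both endpoints, $\alpha$ is strictly on one side of $\L$ near its endpoints. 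Its first and last edges make angles bounded away from $0$ with the endpoint chord.

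The tangents $T_m(0)=T(0)$ and $T_m(\ell)\to T(\ell)$, the latter by the terminal-value part of Lemma~\ref{lem:continuous}. Together with uniform convergence, this shows $\alpha_m$ lies on one side of its own endpoint line $\L_m$ and leaves it transversally. Hence $\alpha_m(0),\alpha_m(\ell)$ are extreme points of $\conv(\alpha_m)$, and Lemma~\ref{lem:convex} gives convexity. To make this precise, work in a projective chart and use a supporting line at $\alpha_m(0)$ transverse to $T(0)$.

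If this endpoint argument is too delicate, a fallback is to also keep the endpoint chords fixed. In that case we would add a small correcting atom near each end to restore $T_m(\ell)=T(\ell)$ exactly, and absorb the error into the $L^1$ estimate.
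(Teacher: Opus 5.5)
Your proposal is sound, and the construction is essentially the paper's. You discretize $d\K_\alpha$ and $d\K_\beta$ over one common family of cells, put each cell's mass on the vertex it contains, and refine near large atoms so that all masses stay below $\pi$. You then rebuild the curves with Proposition~\ref{prop:prescribed-curvature} and read off $\tilde\angle_{\beta_m}(t^m_j)\leq\tilde\angle_{\alpha_m}(t^m_j)$ from $d\K_\beta\leq d\K_\alpha$ on each cell. The paper instead takes partition points at continuity points of $\K_\alpha,\K_\beta$, assigns $d\K_\alpha([t^m_{j-1},t^m_j])$ to $t^m_j$, and drops the last cell, so it only gets $\K_{\alpha_m}(\ell)\to\K_\alpha(\ell)$; your midpoint-and-fold cells give equality.

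You genuinely diverge in proving convexity of $\alpha_m$. The paper shows that the closed curve $\ol\alpha_m=\alpha_m\cup\sigma_m$ is simple, ruling out both self-intersections of $\alpha_m$ and hits of $\sigma_m$ by small-loop Gauss--Bonnet arguments. It then checks that the two closing corners have angles less than $\pi$. You rule out self-intersections the same way, but then only need $\alpha_m(0),\alpha_m(\ell)\in\partial\conv(\alpha_m)$, and let Lemma~\ref{lem:convex} do the closing up. This is a legitimate alternative: it trades the paper's second Gauss--Bonnet argument and the corner check for a supporting-line argument. Both routes, however, rest on the same estimate, which you leave implicit.

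Specifically, ``together with uniform convergence, this shows $\alpha_m$ lies on one side of $\L_m$'' does not follow from what you cite. The first and last edges of $\alpha_m$ have length at most $|P_m|$, so $T_m(0)=T(0)$ and $T_m(\ell)\to T(\ell)$ control $\alpha_m$ only along those edges. Uniform convergence puts $\alpha_m(t)$ on the correct side only where $\alpha(t)$ is farther from $\L$ than $\sup_s|\alpha_m(s)\alpha(s)|$. Nothing relates these two scales.

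What is needed is a uniform bound on the turning of $\alpha_m$ near the ends. In your construction, $\kappa_{\alpha_m}([0,\delta])\leq d\K_\alpha([0,\delta+|P_m|))$, and similarly at $\ell$. This is small for small $\delta$ because $d\K_\alpha$ has no atoms at $0$ or $\ell$; this is exactly the content of Lemmas~\ref{lem:shrinking-curvature} and \ref{lem:construction-curvature-bound}. With it, in a projective chart as in Lemma~\ref{lem:local-convexity}:
\begin{itemize}
\item $\alpha_m([0,\delta])$ stays in a thin cone about $T(0)$, and $\alpha_m([\ell-\delta,\ell])$ in a thin cone about $-T_m(\ell)$;
\item both cones are uniformly transverse to $\L_m$, by transversality;
\item the middle part follows from uniform convergence, once you note that convexity plus transversality give $\alpha((0,\ell))\cap\L=\emptyset$ globally, not just near the ends.
\end{itemize}

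Two smaller points. First, a cell with no atom of mass $\geq 1/m$ can still carry many small atoms, so ``at most $1/m$ plus diffuse mass'' is wrong as stated. The correct reason cell masses eventually fall below $\pi$ is that $\sup_{|I|\leq\delta}d\K_\alpha(I)\to\max_t d\K_\alpha(\{t\})<\pi$ as $\delta\to0$, and likewise for $d\K_\beta$. Second, your fallback is unnecessary and would not work as stated. A correcting atom may be negative, which would destroy $d\K_{\alpha_m}\geq0$ and the exterior-angle comparison, and fixing $T_m(\ell)$ does not fix $\alpha_m(\ell)$.
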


Now  \eqref{eq:schur} may be proved as follows. By Theorem \ref{thm:reshetnyak-curvature} it suffices to consider the case of convex curves $\alpha,\beta\colon[0,\ell]\to\M^2_k$. Let $\alpha_m$, $\beta_m$ be as in Proposition \ref{prop:polygonal-approx}.
By the  arm lemma \cite[9.63]{akp2024},  \eqref{eq:schur} holds for $\alpha_m$, $\beta_m$, and we let $m\to\infty$. In the general case, we may assume that $\alpha$ is not closed and is not a geodesic. Then
$\alpha|_{[0,\ell-\epsilon]}$ is transversal for small $\epsilon>0$, if no terminal segment $\alpha([b,\ell])$ lies on $\mathcal{L}$.  Hence \eqref{eq:schur} holds for $\alpha|_{[0,\ell-\epsilon]}$, $\beta|_{[0,\ell-\epsilon]}$, and we let $\epsilon\to 0$. If $\alpha([b,\ell])\subset\L$, we reverse the parametrization unless an initial segment $\alpha([0,a])\subset\L$. In that case \eqref{eq:schur} still holds for $\alpha|_{[a,b]}$ and $\beta|_{[a,b]}$ by the preceding argument, assuming $[0,a]\cup [b,\ell]=\alpha^{-1}(\L)$, and the triangle inequality yields:
$$
a+|\alpha(0)\alpha(\ell)|+(\ell-b)=|\alpha(a)\alpha(b)|\leq |\beta(a)\beta(b)|\leq a+|\beta(0)\beta(\ell)|+(\ell-b).
$$
Thus we obtain $|\alpha(0)\alpha(\ell)|\leq |\beta(0)\beta(\ell)|$ as desired.

The proof of Proposition \ref{prop:polygonal-approx} is divided into three parts. First we construct $\alpha_m$, $\beta_m$ and establish their convergence and local convexity. It then remains to
show that $\alpha_m$ are convex. For this we reduce the
problem to simplicity of the closed curves $\ol\alpha_m$ obtained by connecting the endpoints of $\alpha_m$ with a geodesic segment $\sigma_m$, using 
elementary estimates for the curvature of small subarcs. Finally we prove the simplicity of $\ol\alpha_m$ by ruling out self-intersections
of $\alpha_m$ and intersections of $\sigma_m$ with the interior of $\alpha_m$.

\subsection{Construction and convergence}\label{subsec:construction}
After possibly reflecting the model plane for $\alpha$ and $\beta$ separately,
we may assume that both curves turn left. Then $d\K_\alpha,d\K_\beta\geq0$,
so $\K_\alpha$ and $\K_\beta$ are monotone. Consequently $\K_\alpha$ and $\K_\beta$ have at most countably
many discontinuities, and we may choose the partitions $P_m$ so that their
interior points are continuity points of both $\K_\alpha$ and $\K_\beta$. Set
$I^m_j:=[t^m_{j-1},t^m_j]$ and $\Delta^m_j:=t^m_j-t^m_{j-1}$.
For $1\leq j\leq N_m-1$, define $a^m_j:=d\K_\alpha(I^m_j)$ and
$b^m_j:=d\K_\beta(I^m_j)$. Refining the partitions if necessary, we may
assume $a^m_j,b^m_j\in[0,\pi)$ for all $j$. 

Let $\alpha_m\colon[0,\ell]\to\M^2_k$ be the unit speed polygonal curve which starts at $\alpha(0)$, is tangent to
$\alpha$ at $\alpha(0)$, has successive edge lengths $\Delta^m_j$, and turns left by $a^m_j$ at $t^m_j$, i.e.,
$d\K_{\alpha_m}(\{t^m_j\})=a^m_j$. Similarly, let $\beta_m\colon[0,\ell]\to\M^2_k$ be the polygonal curve
which starts at $\beta(0)$, is tangent to $\beta$ at $\beta(0)$, has successive
edge lengths $\Delta^m_j$, and turns left by $b^m_j$ at $t^m_j$, i.e.,
$d\K_{\beta_m}(\{t^m_j\})=b^m_j$. In particular, $\alpha_m$ and $\beta_m$  are locally convex. If $\kappa_\beta\leq\kappa_\alpha$, we have
$d\K_\beta\leq d\K_\alpha$, which yields $b^m_j\leq a^m_j$. So 
$\tilde\angle_{\beta_m}(t^m_j)\leq\tilde\angle_{\alpha_m}(t^m_j).$

Let $s\in(0,\ell)$ be a continuity point of $\K_\alpha$, and let $t^m_{j_m}$
be the largest point of $P_m$ with $t^m_{j_m}<s$. Then $t^m_{j_m}\to s$, and
by construction $\K_{\alpha_m}(s)=\K_\alpha(t^m_{j_m})$. Hence
$\K_{\alpha_m}(s)\to\K_\alpha(s)$. Similarly
$\K_{\beta_m}(s)\to\K_\beta(s)$ at every continuity point of $\K_\beta$.
Since these functions are monotone and uniformly bounded, it follows that
$$
\|\K_{\alpha_m}-\K_\alpha\|_{L^1([0,\ell])}\to0,
\qquad
\|\K_{\beta_m}-\K_\beta\|_{L^1([0,\ell])}\to0.
$$
By Lemma~\ref{lem:continuous}, $\alpha_m\to\alpha$ and
$\beta_m\to\beta$ uniformly on $[0,\ell]$.
Since the interior points of $P_m$ are continuity points of $\K_\alpha$,
splitting $\alpha$ at these points introduces no additional exterior angle.
Thus
$
\K_{\alpha_m}(\ell)=\K_\alpha(t^m_{N_m-1})\to\K_\alpha(\ell),
$
by the left-continuity of $\K_\alpha$. Similarly,
$
\K_{\beta_m}(\ell)\to\K_\beta(\ell).
$
Consequently, Lemma~\ref{lem:continuous} gives
$
T_{\alpha_m}(\ell)\to T_\alpha(\ell).
$

\subsection{Reduction to simplicity}
It remains to show that $\alpha_m$ is convex (for large $m$). Let $\ol\alpha_m$, $\ol\alpha$ be the closed curves formed by joining the endpoints of $\alpha_m$, $\alpha$, respectively, with  geodesic segments $\sigma_m$ and $\sigma$. It suffices to check that $\ol\alpha_m$ is simple. Indeed if $\ol\alpha_m$ is simple, then we can consider its interior angles. By construction, these angles are $\leq\pi$ at interior vertices of $\alpha_m$. Furthermore, the interior angles of $\ol\alpha_m$ at $\alpha_m(0)$ and $\alpha_m(\ell)$ are $\leq\pi$ as well, since by Lemma \ref{lem:continuous} these angles converge to the corresponding angles of $\ol\alpha$, which are $<\pi$ by the transversality assumption on $\alpha$. Hence $\ol\alpha_m$ is locally convex. Since $\ol\alpha_m$ is simple, it follows that it is convex, as desired.
To show that $\ol\alpha_m$ is simple, we need the following estimates. 

\begin{lemma}\label{lem:shrinking-curvature}
Let $r_m\leq u_m$ be sequences of points in $[0,\ell]$. If $r_m,u_m\to c$ for $c\in (0,\ell)$, then 
$$
\limsup_{m\to\infty}\kappa_\alpha([r_m,u_m])\leq \tilde\angle_\alpha(c).
$$
If $u_m\to0$ or $r_m\to\ell$, then
$
\kappa_\alpha([0,u_m])\to0
$
or
$
\kappa_\alpha([r_m,\ell])\to0,
$
respectively.
\end{lemma}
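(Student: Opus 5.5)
The plan is to express total curvature of subarcs through the unsigned curvature measure, $\kappa_\alpha([s,t])=|d\K_\alpha|((s,t))$, as recorded in Section~\ref{sec:curvature-measures}. Then both assertions reduce to continuity properties of the finite Borel measure $\mu:=|d\K_\alpha|$ on $[0,\ell]$ along shrinking families of intervals. In the present setting $\alpha$ is convex and turns left, so $\mu=d\K_\alpha\geq0$. The argument does not actually use this; finiteness of $\mu$ suffices.

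For the first claim, fix $\delta>0$ with $[c-\delta,c+\delta]\subset(0,\ell)$. For large $m$ we have $[r_m,u_m]\subset(c-\delta,c+\delta)$, so monotonicity of measures gives
$$
\kappa_\alpha([r_m,u_m])=\mu((r_m,u_m))\leq\mu((c-\delta,c+\delta)).
$$
Hence $\limsup_m\kappa_\alpha([r_m,u_m])\leq\mu((c-\delta,c+\delta))$ for every $\delta>0$. The open intervals $(c-\delta,c+\delta)$ decrease to $\{c\}$ as $\delta\to0$, and $\mu$ is finite. By continuity from above, $\mu((c-\delta,c+\delta))\to\mu(\{c\})$. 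Section~\ref{sec:curvature-measures} identifies the atoms as $\mu(\{c\})=|d\K_\alpha(\{c\})|=\tilde\angle_\alpha(c)$, which gives the stated bound.

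For the endpoint claims, suppose $u_m\to0$. Then
$$
\kappa_\alpha([0,u_m])=\mu((0,u_m))\leq\mu((0,\delta))
$$
once $u_m<\delta$. The sets $(0,\delta)$ decrease to the empty set, so $\mu((0,\delta))\to0$, and therefore $\kappa_\alpha([0,u_m])\to0$. The case $r_m\to\ell$ is symmetric, using the sets $(\ell-\delta,\ell)$.

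The only delicate point is that the identity $\kappa_\alpha([s,t])=|d\K_\alpha|((s,t))$ involves the open interval. This excludes the endpoint atoms, which is exactly why the endpoint case gives $0$ rather than an angle, and why the hypothesis $c\in(0,\ell)$ is needed for the first claim. I expect no further obstacle; the argument is essentially $\sigma$-continuity of a finite measure.
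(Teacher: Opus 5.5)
Your proposal is correct and follows essentially the same route as the paper: write $\kappa_\alpha([r_m,u_m])$ as the curvature measure of the open interval $(r_m,u_m)$, then use continuity from above of a finite measure, with the singleton mass identified as $\tilde\angle_\alpha(c)$. Working with $|d\K_\alpha|$ instead of $d\K_\alpha\geq0$, and bounding by the fixed sets $(0,\delta)$ rather than passing to a monotone subsequence of $u_m$, are harmless (slightly cleaner) variations.
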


\begin{proof}
Let $\mu:=d\K_\alpha$. Since $\alpha$ turns left, $\mu\geq0$, and therefore,
as discussed in Section~\ref{sec:curvature-measures},
$
\kappa_\alpha([s,t])=\mu((s,t))
$
and
$
\mu(\{c\})=\tilde\angle_\alpha(c).
$
Since $\mu$ is a finite measure, it is continuous from above. In particular,  we may choose $r<c<u$ so close to $c$ that
$
\mu((r,u))<\mu(\{c\})+\epsilon,
$
for any given $\epsilon>0$.
For sufficiently large $m$, $[r_m,u_m]\subset (r,u)$. Hence
$$
\kappa_\alpha([r_m,u_m])
=
\mu((r_m,u_m))
\leq
\mu((r,u))
<
\tilde\angle_\alpha(c)+\epsilon,
$$
which proves the first assertion.  If $u_m\to0$, then after passing to a subsequence we may assume that $u_m$ is monotone decreasing. Hence, by continuity from above,
$
\mu((0,u_m))\to\mu(\emptyset)=0.
$
Similarly, if $r_m\to\ell$, then $\mu((r_m,\ell))
\to0.$
\end{proof}

\begin{lemma}\label{lem:construction-curvature-bound}
Let $[s,t]\subset[0,\ell]$. Let $r$ be the largest point of $P_m$ with
$r\leq s$, and $u$ be the smallest point of $P_m$ with $t\leq u$. Then
$\kappa_{\alpha_m}([s,t])\leq\kappa_\alpha([r,u])$.
\end{lemma}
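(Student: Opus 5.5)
The plan is to compute $\kappa_{\alpha_m}([s,t])$ directly from the construction in Section~\ref{subsec:construction}. Since $\alpha_m$ is polygonal with nonnegative atoms at the interior partition points, $d\K_{\alpha_m}=\sum_j a^m_j\delta_{t^m_j}$. By the formula $\kappa_\gamma([s,t])=|d\K_\gamma|((s,t))$ from Section~\ref{sec:curvature-measures}, this gives
$$
\kappa_{\alpha_m}([s,t])=\sum_{t^m_j\in(s,t)} a^m_j .
$$
Only vertices strictly inside $(s,t)$ contribute, because total curvature on $[s,t]$ involves only interior partition points.

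Next I will bound this sum. If no partition point lies in $(s,t)$, the sum is $0$ and there is nothing to prove. Otherwise, let $t^m_p<\cdots<t^m_q$ be the partition points in $(s,t)$. By the choice of $r$ and $u$, we have $r\leq t^m_{p-1}$, and $t^m_{p-1}\leq s$ since $t^m_{p-1}\notin(s,t)$. Similarly $t\leq t^m_{q+1}\leq u$. In fact $r=t^m_{p-1}$ and $u=t^m_{q+1}$.

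Each atom satisfies $a^m_j=d\K_\alpha(I^m_j)$, where $I^m_j=[t^m_{j-1},t^m_j]$ is a closed interval. So the sum equals $d\K_\alpha\big(\bigcup_{j=p}^q I^m_j\big)=d\K_\alpha([t^m_{p-1},t^m_q])$. The intervals overlap only at the interior partition points, which are continuity points of $\K_\alpha$ and hence carry no atoms, so adding over $j$ introduces no double counting.

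The step that needs care is relating this closed interval to the open interval in $\kappa_\alpha([r,u])=d\K_\alpha((r,u))$, using $d\K_\alpha\geq0$.
\begin{itemize}
\item Left end: $t^m_{p-1}=r$. If $r$ is an interior partition point, it is not an atom, so $d\K_\alpha([r,t^m_q])=d\K_\alpha((r,t^m_q])$. If $r=0$, then $d\K_\alpha(\{0\})=0$ by convention, with the same result.
\item Right end: $t^m_q<t\leq u$, so $(r,t^m_q]\subset(r,u)$.
\end{itemize}
Monotonicity of the nonnegative measure $d\K_\alpha$ then gives
$$
\kappa_{\alpha_m}([s,t])\leq d\K_\alpha((r,u))=\kappa_\alpha([r,u]).
$$

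One point to check is the convention for $a^m_j$ when $j=1$ or $j=N_m$. The atoms are placed only at the interior vertices $t^m_1,\dots,t^m_{N_m-1}$, with $a^m_j=d\K_\alpha(I^m_j)$. If a vertex $t^m_j$ is preceded by $I^m_j$ and the indexing does not match exactly, I will redo the bookkeeping with whichever intervals $I^m_j$ define the atoms in $(s,t)$. The argument is unchanged: those intervals lie in $[r,u]$, and their union meets $\{r,u\}$ only at non-atoms or at endpoints of $[0,\ell]$, which carry no mass. This bookkeeping at the ends is the only real obstacle.
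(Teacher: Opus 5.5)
Your proof is correct and is essentially the paper's argument: $\kappa_{\alpha_m}([s,t])$ is the sum of the atoms $a^m_j=d\K_\alpha(I^m_j)$ at the vertices $t^m_j\in(s,t)$. These intervals lie in $[r,u]$, and partition points carry no mass of $d\K_\alpha$, so the sum is at most $d\K_\alpha((r,u))=\kappa_\alpha([r,u])$. Your explicit check that $r$ is either $0$ or an interior partition point, and hence not an atom, is what the paper leaves implicit. Your closing bookkeeping worry is moot, since the paper defines $a^m_j=d\K_\alpha(I^m_j)$ exactly for the interior vertices $1\leq j\leq N_m-1$.
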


\begin{proof}
By definition $\kappa_{\alpha_m}([s,t])$ is the sum of the exterior angles of
$\alpha_m$ at the vertices $t^m_j\in(s,t)$. By construction, the exterior
angle at $t^m_j$ is $a^m_j=d\K_\alpha(I^m_j)$, where
$I^m_j=[t^m_{j-1},t^m_j]$. Each such interval $I^m_j$ is contained in
$[r,u]$. Since the points of $P_m$ in $(0,\ell)$ are continuity points of
$\K_\alpha$, splitting $\alpha$ at these points introduces no extra curvature.
Thus the sum of the corresponding exterior angles is bounded above by
$\kappa_\alpha([r,u])$.
\end{proof}

\subsection{Proof of simplicity}

To establish that 
$\ol\alpha_m$ is simple, we first show that $\alpha_m$ is
simple, and then we check that $\alpha_m$ intersects  $\sigma_m$
only at its endpoints.

\subsubsection{Simplicity of $\alpha_m$}
Suppose, towards a contradiction, that  $\alpha_m$ is not simple. Passing to a subsequence, we
may choose $s_m<t_m$ such that
$
\alpha_m(s_m)=\alpha_m(t_m),
$
and $\alpha_m([s_m,t_m])$ is simple except for its endpoints.
Since $\alpha_m\to\alpha$ uniformly and $\alpha$ is injective, 
$s_m,t_m\to c$ for some $c\in[0,\ell]$. Thus the loop $\alpha_m([s_m,t_m])$ shrinks to a point. By the same
Gauss-Bonnet argument as in the proof of Lemma~\ref{lem:local-convexity},
since the exterior angle at the base point is at most $\pi$,
$$
\liminf_{m\to\infty}\kappa_{\alpha_m}([s_m,t_m])\geq \pi.
$$
On the other hand, let $r_m,u_m\in P_m$ be the points given by Lemma~\ref{lem:construction-curvature-bound}
for the interval $[s_m,t_m]$.
Then $r_m,u_m\to c$, and Lemma~\ref{lem:construction-curvature-bound} gives
$\kappa_{\alpha_m}([s_m,t_m])\leq \kappa_\alpha([r_m,u_m])$.
If $c\in(0,\ell)$, Lemma~\ref{lem:shrinking-curvature} yields
$$
\limsup_{m\to\infty}\kappa_{\alpha_m}([s_m,t_m])
\leq \tilde\angle_\alpha(c)<\pi.
$$
 If $c=0$ or
$c=\ell$, Lemmas \ref{lem:shrinking-curvature} and \ref{lem:construction-curvature-bound} give instead
$
\kappa_{\alpha_m}([s_m,t_m])\to0.
$
Both alternatives contradict the lower bound above.

\subsubsection{Intersection of $\alpha_m$ with $\sigma_m$} 
It remains to show that $\alpha_m((0,\ell))\cap\sigma_m=\emptyset$.
Suppose towards a contradiction that there exist
$s_m\in(0,\ell)$ with $\alpha_m(s_m)\in\sigma_m$. Since
$\alpha_m\to\alpha$ uniformly and $\sigma_m\to\sigma$, any limit point $c$ of
$s_m$ satisfies $\alpha(c)\in\sigma$. Since $\alpha$ is a transversal convex
arc, $c=0$ or $c=\ell$. If $c=0$, choose $s_m$ to be the first intersection of
$\alpha_m$ with $\sigma_m$ after $\alpha_m(0)$; if $c=\ell$, choose $s_m$ to
be the last such intersection before $\alpha_m(\ell)$. Then the corresponding
subarc of $\alpha_m$, together with the appropriate subsegment of $\sigma_m$,
forms a simple loop, and either $s_m\to 0$ or $s_m\to\ell$.

Suppose first that $s_m\to0$. Let $\psi_m$ be the angle between
$T_{\alpha_m}(0)$ and the initial direction of $\sigma_m$ at $\alpha_m(0)$. Since $\alpha_m$ has the
same initial tangent as $\alpha$, and since $\sigma_m\to\sigma$, we have
$
\psi_m\to\psi>0,
$
where $\psi$ is the angle between $T(0)$ and the initial direction of
$\sigma$ at $\alpha(0)$. The loop formed by $\alpha_m([0,s_m])$ and the geodesic segment from
$\alpha_m(s_m)$ back to $\alpha_m(0)$ has diameter tending to zero. As in the
Gauss-Bonnet argument in the proof of Lemma~\ref{lem:local-convexity}, the
exterior angle at $\alpha_m(0)$ is $\pi-\psi_m$, while the other exterior
angle is at most $\pi$, and therefore
$
\kappa_{\alpha_m}([0,s_m])\geq \psi_m-o(1).
$
Consequently,
$$
\liminf_{m\to\infty}\kappa_{\alpha_m}([0,s_m])\geq \psi>0.
$$
On the other hand, choose $u_m\in P_m$ with $s_m\leq u_m$ and
$u_m-s_m\leq |P_m|$. Then $u_m\to0$, and
Lemma~\ref{lem:construction-curvature-bound} gives
$
\kappa_{\alpha_m}([0,s_m])
\leq
\kappa_\alpha([0,u_m]).
$
This tends to $0$ by Lemma~\ref{lem:shrinking-curvature}, a contradiction. 

The case $s_m\to\ell$ is analogous. The only difference is that the angle at
$\alpha_m(\ell)$ is controlled by the convergence $T_{\alpha_m}(\ell)\to T(\ell)$,
obtained above from Lemma~\ref{lem:continuous}, instead
of the fixed initial tangent. Thus that angle is bounded away from zero, while
Lemmas~\ref{lem:construction-curvature-bound} and
\ref{lem:shrinking-curvature} force
$\kappa_{\alpha_m}([s_m,\ell])\to0$, a contradiction.

Thus $\alpha_m((0,\ell))\cap\sigma_m=\emptyset$, and so $\ol\alpha_m$ is
simple. As explained above, this implies that $\alpha_m$ is convex and
completes the proof of Proposition~\ref{prop:polygonal-approx}.

\section*{Acknowledgement}
We thank Anton Petrunin for useful discussions and encouragement to obtain a comprehensive form of Schur's comparison theorem.

\bibliography{references}

\begin{bibdiv}
\begin{biblist}

\bib{aigner-ziegler1999}{book}{
      author={Aigner, Martin},
      author={Ziegler, G\"{u}nter~M.},
       title={Proofs from {T}he {B}ook},
   publisher={Springer-Verlag, Berlin},
        date={1999},
        ISBN={3-540-63698-6},
        note={Including illustrations by Karl H. Hofmann, Corrected reprint of
  the 1998 original},
      review={\MR{1723092}},
}

\bib{akp2024}{book}{
      author={Alexander, Stephanie},
      author={Kapovitch, Vitali},
      author={Petrunin, Anton},
       title={Alexandrov geometry---foundations},
      series={Graduate Studies in Mathematics},
   publisher={American Mathematical Society, Providence, RI},
        date={[2024] \copyright 2024},
      volume={236},
        ISBN={[9781470473020]; [9781470475369]; [9781470475352]},
      review={\MR{4734965}},
}

\bib{alexander-bishop1996}{article}{
      author={Alexander, Stephanie~B.},
      author={Bishop, Richard~L.},
       title={Comparison theorems for curves of bounded geodesic curvature in
  metric spaces of curvature bounded above},
        date={1996},
        ISSN={0926-2245},
     journal={Differential Geom. Appl.},
      volume={6},
      number={1},
       pages={67\ndash 86},
         url={https://doi.org/10.1016/0926-2245(96)00008-3},
      review={\MR{1384880}},
}

\bib{alexander-bishop1998}{article}{
      author={Alexander, Stephanie~B.},
      author={Bishop, Richard~L.},
       title={The {F}ary-{M}ilnor theorem in {H}adamard manifolds},
        date={1998},
        ISSN={0002-9939,1088-6826},
     journal={Proc. Amer. Math. Soc.},
      volume={126},
      number={11},
       pages={3427\ndash 3436},
         url={https://doi.org/10.1090/S0002-9939-98-04423-2},
      review={\MR{1459103}},
}

\bib{alexandrov-reshetnyak1989}{book}{
      author={Alexandrov, A.~D.},
      author={Reshetnyak, Yu.~G.},
       title={General theory of irregular curves},
      series={Mathematics and its Applications (Soviet Series)},
   publisher={Kluwer Academic Publishers Group, Dordrecht},
        date={1989},
      volume={29},
        ISBN={90-277-2811-9},
         url={https://doi.org/10.1007/978-94-009-2591-5},
        note={Translated from the Russian by L. Ya. Yuzina},
      review={\MR{1117220}},
}

\bib{bridson-haefliger1999}{book}{
      author={Bridson, Martin~R.},
      author={Haefliger, Andr\'{e}},
       title={Metric spaces of non-positive curvature},
      series={Grundlehren der mathematischen Wissenschaften [Fundamental
  Principles of Mathematical Sciences]},
   publisher={Springer-Verlag, Berlin},
        date={1999},
      volume={319},
        ISBN={3-540-64324-9},
         url={https://doi.org/10.1007/978-3-662-12494-9},
      review={\MR{1744486}},
}

\bib{bbi2001}{book}{
      author={Burago, Dmitri},
      author={Burago, Yuri},
      author={Ivanov, Sergei},
       title={A course in metric geometry},
      series={Graduate Studies in Mathematics},
   publisher={American Mathematical Society, Providence, RI},
        date={2001},
      volume={33},
        ISBN={0-8218-2129-6},
      review={\MR{1835418 (2002e:53053)}},
}

\bib{cks2002}{article}{
      author={Cantarella, Jason},
      author={Kusner, Robert~B.},
      author={Sullivan, John~M.},
       title={On the minimum ropelength of knots and links},
        date={2002},
        ISSN={0020-9910},
     journal={Invent. Math.},
      volume={150},
      number={2},
       pages={257\ndash 286},
         url={https://doi.org/10.1007/s00222-002-0234-y},
      review={\MR{1933586}},
}

\bib{lopez-mateos-masque2010}{article}{
      author={Castrill\'{o}n~L\'{o}pez, M.},
      author={Fern\'{a}ndez~Mateos, V.},
      author={Mu\~{n}oz Masqu\'{e}, J.},
       title={Total curvature of curves in {R}iemannian manifolds},
        date={2010},
        ISSN={0926-2245,1872-6984},
     journal={Differential Geom. Appl.},
      volume={28},
      number={2},
       pages={140\ndash 147},
         url={https://doi.org/10.1016/j.difgeo.2009.10.008},
      review={\MR{2594458}},
}

\bib{chern1967}{incollection}{
      author={Chern, S.~S.},
       title={Curves and surfaces in {E}uclidean space},
        date={1967},
   booktitle={Studies in {G}lobal {G}eometry and {A}nalysis},
   publisher={Math. Assoc. America},
       pages={16\ndash 56},
      review={\MR{212744}},
}

\bib{ghomi2025-convexity}{article}{
      author={Ghomi, Mohammad},
       title={Convexity and rigidity of hypersurfaces in {C}artan-{H}adamard
  manifolds},
        date={2025},
     journal={Asian J. Math.},
      volume={29},
      number={6},
       pages={761\ndash 772},
}

\bib{ghomi2026-total}{article}{
      author={Ghomi, Mohammad},
      author={Hoisington, Joseph~Ansel},
      author={Raffaelli, Matteo},
      author={Stavroulakis, John~Ioannis},
       title={Total absolute curvature and rigidity of surfaces in
  {C}artan-{H}adamard manifolds},
        date={2026},
     journal={arXiv:2604.25024},
        note={\url{https://arxiv.org/abs/2604.25024}},
}

\bib{ghomi-wenk2021}{article}{
      author={Ghomi, Mohammad},
      author={Wenk, James},
       title={Shortest closed curve to inspect a sphere},
        date={2021},
        ISSN={0075-4102},
     journal={J. Reine Angew. Math.},
      volume={781},
       pages={57\ndash 84},
         url={https://doi.org/10.1515/crelle-2021-0049},
      review={\MR{4343095}},
}

\bib{gromov2025}{article}{
      author={Gromov, Misha},
       title={Lectures on immersions with controlled curvatures},
        date={2025},
     journal={arXiv:2511.01796},
        note={\url{https://arxiv.org/abs/2511.01796}},
}

\bib{hopf1989}{book}{
      author={Hopf, Heinz},
       title={Differential geometry in the large},
     edition={Second},
   publisher={Springer-Verlag},
     address={Berlin},
        date={1989},
        ISBN={3-540-51497-X},
        note={Notes taken by Peter Lax and John W. Gray, With a preface by S.
  S. Chern, With a preface by K. Voss},
      review={\MR{90f:53001}},
}

\bib{howard2024}{article}{
      author={Howard, Ralph},
       title={Schur type comparison theorems for affine curves},
        date={2024},
        ISSN={0047-2468,1420-8997},
     journal={J. Geom.},
      volume={115},
      number={1},
       pages={Paper No. 3, 23},
         url={https://doi.org/10.1007/s00022-023-00700-7},
      review={\MR{4679441}},
}

\bib{karuwannapatana-maneesawarng2007}{article}{
      author={Karuwannapatana, Wichitra},
      author={Maneesawarng, Chaiwat},
       title={The lower semi-continuity of total curvature in spaces of
  curvature bounded above},
        date={2007},
        ISSN={1513-489X},
     journal={East-West J. Math.},
      volume={9},
      number={1},
       pages={1\ndash 9},
      review={\MR{2444119}},
}

\bib{lang-schroeder1997}{article}{
      author={Lang, U.},
      author={Schroeder, V.},
       title={Kirszbraun's theorem and metric spaces of bounded curvature},
        date={1997},
        ISSN={1016-443X},
     journal={Geom. Funct. Anal.},
      volume={7},
      number={3},
       pages={535\ndash 560},
         url={https://doi.org/10.1007/s000390050018},
      review={\MR{1466337}},
}

\bib{lopez2011}{article}{
      author={L\'{o}pez, Rafael},
       title={The theorem of {S}chur in the {M}inkowski plane},
        date={2011},
        ISSN={0393-0440,1879-1662},
     journal={J. Geom. Phys.},
      volume={61},
      number={1},
       pages={342\ndash 346},
         url={https://doi.org/10.1016/j.geomphys.2010.10.009},
      review={\MR{2747005}},
}

\bib{maneesawarng-lenbury2003}{article}{
      author={Maneesawarng, Chaiwat},
      author={Lenbury, Yongwimon},
       title={Total curvature and length estimate for curves in {${\rm
  CAT}(K)$} spaces},
        date={2003},
        ISSN={0926-2245,1872-6984},
     journal={Differential Geom. Appl.},
      volume={19},
      number={2},
       pages={211\ndash 222},
         url={https://doi.org/10.1016/S0926-2245(03)00031-7},
      review={\MR{2002660}},
}

\bib{mendes2025}{article}{
      author={Mendes, Ricardo A.~E.},
       title={Diameter and focal radius of submanifolds},
        date={2025},
        ISSN={0232-704X,1572-9060},
     journal={Ann. Global Anal. Geom.},
      volume={68},
      number={2},
       pages={Paper No. 4, 6},
         url={https://doi.org/10.1007/s10455-025-10010-7},
      review={\MR{4927774}},
}

\bib{milnor1950}{article}{
      author={Milnor, J.~W.},
       title={On the total curvature of knots},
        date={1950},
     journal={Ann. of Math. (2)},
      volume={52},
       pages={248\ndash 257},
      review={\MR{12,273c}},
}

\bib{mucci-saracco2021}{article}{
      author={Mucci, Domenico},
      author={Saracco, Alberto},
       title={The total intrinsic curvature of curves in {R}iemannian
  surfaces},
        date={2021},
        ISSN={0009-725X,1973-4409},
     journal={Rend. Circ. Mat. Palermo (2)},
      volume={70},
      number={1},
       pages={521\ndash 557},
         url={https://doi.org/10.1007/s12215-020-00516-3},
      review={\MR{4234326}},
}

\bib{mucci-saracco2024}{article}{
      author={Mucci, Domenico},
      author={Saracco, Alberto},
       title={On the curvatures of irregular curves in {E}uclidean spaces and
  {R}iemannian surfaces},
        date={2024},
        ISSN={2189-3756,2189-3764},
     journal={Pure Appl. Funct. Anal.},
      volume={9},
      number={2},
       pages={555\ndash 600},
      review={\MR{4755827}},
}

\bib{ni2023}{article}{
      author={Ni, Lei},
       title={A {S}chur's theorem via a monotonicity and the expansion module},
        date={2023},
        ISSN={0075-4102,1435-5345},
     journal={J. Reine Angew. Math.},
      volume={805},
       pages={101\ndash 114},
         url={https://doi.org/10.1515/crelle-2023-0061},
      review={\MR{4669033}},
}

\bib{petrunin2025}{article}{
      author={Petrunin, Anton},
       title={Veronese minimizes normal curvatures},
        date={2025},
     journal={arXiv:2408.05909},
        note={\url{https://arxiv.org/abs/2408.05909}},
}

\bib{petrunin-zamora2022}{article}{
      author={Petrunin, Anton},
      author={Barrera, Sergio~Zamora},
       title={What is differential geometry: curves and surfaces},
        date={2022},
     journal={arXiv:2012.11814},
        note={\url{https://arxiv.org/pdf/2012.11814}},
}

\bib{reshetnyak1968}{article}{
      author={Reshetnyak, Yu.~G.},
       title={Non-expansive maps in a space of curvature no greater than
  {$K$}},
        date={1968},
        ISSN={0037-4474},
     journal={Sibirsk. Mat. \v{Z}.},
      volume={9},
       pages={918\ndash 927 [English translation in Siberian Math. J.
  \textbf{9} (1968), 683\ndash 687]},
      review={\MR{0244922}},
}

\bib{reshetnyak2005}{article}{
      author={Reshetnyak, Yu.~G.},
       title={The theory of curves in differential geometry from the point of
  view of the theory of functions of a real variable},
        date={2005},
        ISSN={0042-1316,2305-2872},
     journal={Uspekhi Mat. Nauk},
      volume={60},
      number={6(366)},
       pages={157\ndash 174},
         url={https://doi.org/10.1070/RM2005v060n06ABEH004286},
      review={\MR{2215759}},
}

\bib{rudin1976}{book}{
      author={Rudin, Walter},
       title={Principles of mathematical analysis},
     edition={Third},
      series={International Series in Pure and Applied Mathematics},
   publisher={McGraw-Hill Book Co., New York-Auckland-D\"{u}sseldorf},
        date={1976},
      review={\MR{385023}},
}

\bib{sabitov2004}{article}{
      author={Sabitov, I.~Kh.},
       title={Around the proof of the {L}egendre-{C}auchy lemma on convex
  polygons},
        date={2004},
        ISSN={0037-4474},
     journal={Sibirsk. Mat. Zh.},
      volume={45},
      number={4},
       pages={892\ndash 919},
  url={https://doi-org.prx.library.gatech.edu/10.1023/B:SIMJ.0000035837.80962.0a},
      review={\MR{2091654}},
}

\bib{sallee1973}{article}{
      author={Sallee, G.~T.},
       title={Stretching chords of space curves},
        date={1973},
     journal={Geometriae Dedicata},
      volume={2},
       pages={311\ndash 315},
         url={https://doi.org/10.1007/BF00181475},
      review={\MR{336560}},
}

\bib{schoenberg-zaremba1967}{article}{
      author={Schoenberg, I.~J.},
      author={Zaremba, S.~C.},
       title={On {C}auchy's lemma concerning convex polygons},
        date={1967},
        ISSN={0008-414X,1496-4279},
     journal={Canadian J. Math.},
      volume={19},
       pages={1062\ndash 1071},
         url={https://doi.org/10.4153/CJM-1967-096-4},
      review={\MR{216365}},
}

\bib{schur1921}{article}{
      author={Schur, Axel},
       title={{\"U}ber die schwarzsche extremaleigenschaft des kreises unter
  den kurven konstanter kr{\"u}mmung},
        date={1921},
     journal={Mathematische Annalen},
      volume={83},
       pages={143\ndash 148},
}

\bib{brooks-strantzen1992}{article}{
      author={Strantzen, John},
      author={Brooks, Jeff},
       title={A chord-stretching map of a convex loop is an isometry},
        date={1992},
        ISSN={0046-5755},
     journal={Geom. Dedicata},
      volume={41},
      number={1},
       pages={51\ndash 62},
         url={https://doi.org/10.1007/BF00181542},
      review={\MR{1147501}},
}

\bib{sullivan2008}{incollection}{
      author={Sullivan, John~M.},
       title={Curves of finite total curvature},
        date={2008},
   booktitle={Discrete differential geometry},
      series={Oberwolfach Semin.},
      volume={38},
   publisher={Birkh\"auser, Basel},
       pages={137\ndash 161},
         url={http://dx.doi.org/10.1007/978-3-7643-8621-4_7},
      review={\MR{2405664}},
}

\end{biblist}
\end{bibdiv}

\end{document}